\documentclass[10pt,twoside,final]{amsart}

\usepackage[english]{babel}
\usepackage{graphicx,epstopdf,epsfig}
\usepackage{amsfonts,epsfig,fancyhdr,graphics,amsmath,amssymb}

\title{Products of nonprimary cyclic conjugacy classes in the general linear group}

\newtheorem{definition}{Definition}[section]
\newtheorem{theorem}{Theorem}[section]

\newtheorem{lemma}[theorem]{Lemma}
\newtheorem{corollary}[theorem]{Corollary}
\newtheorem{remark}[theorem]{Remark}

\newcommand {\ch }{\operatorname{char}}
\newcommand {\SUG }{\operatorname{SU}}
\newcommand {\UG }{\operatorname{U}}
\newcommand {\PSL }{\mathrm{PSL}}
\newcommand {\PSU }{\mathrm{PSU}}
\newcommand {\PSp }{\mathrm{PSp}}
\newcommand {\mr }{\mathrm{mr}}
\newcommand {\PC }{\mathrm{PC}}
\newcommand {\SC }{\mathrm{SC}}
\newcommand {\Jordan }{\mathrm{J}}
\newcommand {\GL }{\mathrm{GL}}
\newcommand {\SL }{\mathrm{SL}}
\newcommand {\M }{\mathrm{M}}
\newcommand {\Idm }{\mathrm{I}}
\newcommand {\rank }{\operatorname{rank}}
\newcommand {\GF }{\mathrm{GF}}

\usepackage{ifdraft}
\ifdraft{\usepackage[outer]{showlabels}
	\usepackage{todonotes}}{}
\ifdraft{\usepackage[outer]{showlabels}
	\usepackage{todonotes} \usepackage{listlbls} \usepackage{datetime}}{}

\usepackage[pagebackref,colorlinks,citecolor=red,urlcolor=blue,linkcolor=green, bookmarks=false,hypertexnames=true]{hyperref}
\usepackage{orcidlink}
\usepackage{fancyhdr}
\usepackage{verbatim}

\begin{document}
\bibliographystyle{plain}

\setcounter{page}{1}

\thispagestyle{empty}

\keywords{ conjugacy classes, matrix decomposition, cyclic matrices, Schur complement, Thompson conjecture, unitary group}

\subjclass{15A23}

\author{Klaus Nielsen}\,\orcidlink{0009-0002-7676-2944}
\email{klaus@nielsen-kiel.de}

\ifdraft{\today \ \currenttime}{}
\pagestyle{fancy}
\fancyhf{}
\fancyhead[OC]{Klaus Nielsen}
\fancyhead[EC]{Products of nonprimary cyclic conjugacy classes }
\fancyhead[OR]{\thepage}
\fancyhead[EL]{\thepage}
\ifdraft{\title{Products of nonprimary cyclic conjugacy classes in the general linear group v2}}
{}
\maketitle

\begin{abstract}
A cyclic square matrix (and its conjugacy class) $C$ over a field $K$ is called $(m,k)$-cyclic if it has a decomposition $C = A \oplus B$, where $\dim A = m, \dim B = k$ and $m, k \ne 0$. It is shown that the product of
two nonsingular $(m,k)$-cyclic conjugacy classes $\Omega$ and $\Psi$ of $\GL(m+k,K)$ contains all nonscalar matrices $P \in \GL(m+k,K)$ with determinant $\det P = \det \Omega \Psi$.
\end{abstract}

\section{Introduction} \label{intro-sec}

We are interested in the following problem. Let $\Omega$ and $\Psi$ be cyclic conjugacy classes of the general linear group $\GL(n,K)$ over a field $K$. Let
$M \in \GL(n,K)$ be a nonscalar matrix with $\det M = \det \Omega \Psi$. Under what conditions of $\Omega, \Psi$ and $M$ does the product
$\Omega \Psi$ contain $M$?
It is known that  $\Omega \Omega^{-1}$ contains no transvection if $\Omega$ is irreducible.

It follows from a theorem of Sourour \cite[Theorem 1]{Sourour-1986} that if $\Omega$ and $\Psi$ are diagonalizable, then their product  contains all nonscalar matrices $M$ with $\det M = \det \Omega \Psi$. 

If  $|K| \ge 4$, $n \ge 3$, and $\Omega$ and $\Psi$ are triangularizable, then by a theorem of Botha \cite[Main Theorem]{Botha-2010},
$\Omega \Psi$ contains all matrices $M$ with determinant $\det M = \det \Omega \Psi$ and minimal rank at least 2.

The following result is due to A. Lev:
\begin{theorem}{\rm \cite[Theorem 2]{Lev-1994}} \label{Lev}
	Let $|K| \ge 4$, $n \ge 3$.  Let $\Omega$ and $\Psi$ be cyclic similarity classes of $\GL(n,K)$. Assume that $\Omega$ or $\Psi$ is triangularizable.
	 Then $\Omega \Psi$ contains all nonscalar  matrices $M \in \GL(n,K)$ with determinant $\det M = \det \Omega \Psi$.
\end{theorem}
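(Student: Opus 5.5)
The plan is to write $M = XY$ with $X \in \Omega$ and $Y \in \Psi$. Equivalently, we show that $M Y^{-1}$ lies in $\Omega$ for a suitably chosen $Y \in \Psi$, after first replacing $M$ by a conjugate in a convenient normal form. By symmetry (pass to inverses and transposes if necessary) we may assume that $\Psi$ is triangularizable, with eigenvalues $b_1,\dots,b_n \in K$. The basic observation is that an upper triangular matrix $U$ with diagonal $(b_1,\dots,b_n)$ and \emph{nonzero} superdiagonal entries is cyclic. Since a cyclic class is determined by its characteristic polynomial, every such $U$ lies in $\Psi$. We therefore look for $Y = U$ of this form, and keep the entries of $U$ above the superdiagonal as free parameters.

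Next we normalize $M$. Since $M$ is nonscalar, there is a vector $v$ with $v, Mv$ linearly independent. Extending via a chain $v, Mv, \dots$ inside a cyclic submodule, and continuing block by block along the rational canonical form, we conjugate $M$ into a block lower Hessenberg shape whose subdiagonal entries are nonzero wherever possible. If $M$ is cyclic, $M$ becomes a companion matrix $C_g$ with $g$ its characteristic polynomial. In that case $P = C_g U^{-1}$ is again Hessenberg with nonzero subdiagonal, and hence cyclic. Its characteristic polynomial depends polynomially, in fact affinely in suitable coordinates, on the free entries of $U^{-1}$, working column by column. We would show that these parameters can realize every monic $f$ of degree $n$ with $f(0)$ fixed by the determinant condition $\det M = \det \Omega\Psi$. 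This amounts to checking that the coefficient map is onto, by a triangular, induction-on-$n$ argument in which each new column of $U^{-1}$ adjusts one new coefficient. With $f$ chosen as the characteristic polynomial of $\Omega$, we get $P \in \Omega$ and $M = PU$.

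The main obstacle is the non-cyclic case, where $M$ has several invariant factors. Here a Hessenberg form with all subdiagonal entries nonzero is unavailable, and cyclicity of $M U^{-1}$ is no longer automatic. The typical bad case is $M = \lambda \Idm + N$ with $N$ of rank one. I would handle this by decomposing $M$ as a direct sum of its rational canonical blocks and choosing the ordering of $b_1,\dots,b_n$ among the blocks. We would then use the off-diagonal coupling entries of $U$ between consecutive blocks to force a nonzero subdiagonal in the product, proceeding by induction on the number of invariant factors.

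This is exactly where $|K| \ge 4$ and $n \ge 3$ should enter. At the junction between blocks we must choose a coupling scalar that avoids a small finite set of forbidden values: roughly the values making a $2\times 2$ or $3\times 3$ minor vanish, or making the product inherit an eigenvector of $M$. With at most three forbidden elements, a field with at least four elements always leaves a good choice. The base case $n = 3$, in which $M$ is a scalar plus a rank-one matrix, would be verified by a direct computation.
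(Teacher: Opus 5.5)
The paper does not prove this statement; it quotes it from Lev and uses it only as background. So I judge your argument on its own.

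\textbf{Cyclic case.} This part is sound in outline. Take $C_g$ upper Hessenberg (not lower, as your normalization paragraph says). Then $C_gU^{-1}$ is upper Hessenberg with subdiagonal $b_1^{-1},\dots,b_{n-1}^{-1}$, hence cyclic. One correction is needed. The diagonal of $C_gU^{-1}$, and hence its trace, depends only on the superdiagonal of $U$. So if only the entries strictly above the superdiagonal are free, you cannot reach every $f$. You must also choose the superdiagonal of $U^{-1}$: nonzero entries with a prescribed sum, which is possible for $n\ge 3$ and $|K|\ge 3$. Then expand $\det(x\Idm_n - C_gU^{-1})$ along the last column. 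This shows that the entries $(U^{-1})_{k,n}$, $1\le k\le n-2$, fix the coefficients of $x^{k}$ triangularly, and the constant term is forced by the determinant condition.

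\textbf{Non-cyclic case: a genuine gap.} Here the proposed mechanism provably cannot work. Let $M = M_1\oplus\cdots\oplus M_r$ with $r\ge 2$, and let $V = U^{-1}$ be upper triangular. Take a row index $i$ in block $k+1$ and a column index $j$ in a block $\le k$. Then $(MV)_{ij} = \sum_l M_{il}V_{lj} = 0$, because $M_{il}\ne 0$ forces $l$ into block $k+1$, hence $l>j$.
\begin{itemize}
\item So $MU^{-1}$ is block upper triangular with diagonal blocks $M_kV_{kk}$.
\item The subdiagonal entries at the block junctions vanish for every choice of the coupling entries of $U$.
\item The characteristic polynomial $\chi_{MU^{-1}} = \prod_k \chi_{M_kV_{kk}}$ does not depend on those couplings at all.
\end{itemize}
Reordering the blocks, or passing to another Hessenberg conjugate of $M$, does not help. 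An upper Hessenberg matrix with nonzero subdiagonal is cyclic, so every Hessenberg conjugate of a non-cyclic $M$ has a zero subdiagonal entry. It is therefore block upper triangular, and $MU^{-1}$ inherits the same block structure.

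Consequently, if $\chi_\Omega$ is irreducible over $K$, no $U$ of your form gives $MU^{-1}\in\Omega$, although the theorem asserts $M\in\Omega\Psi$. A concrete case is $n=3$: every nonscalar non-cyclic $M$ is $\lambda\Idm_3$ plus a rank-one matrix, with blocks of sizes $1$ and $2$. Then $MU^{-1}$ always has an eigenvalue in $K$. So avoiding finitely many forbidden coupling values cannot be where $|K|\ge 4$ enters, and your $n=3$ base computation would fail.

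What is missing is a normal form in which the triangular factor multiplies something that is not block triangular with respect to the same flag. One option is to put the cyclic factor, rather than $M$, into a shape like $\left[\begin{array}{cc} w & \beta\\ L & 0\end{array}\right]$ with $L$ lower triangular (Lemma \ref{cyc1b}), and to conjugate $M$ so as to prescribe a corner, as in the paper's proof of Theorem \ref{BN}. Another is a Sourour-type induction.
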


F. Bünger and the author have shown the following results;

\begin{theorem}{\rm \cite[Theorem 1.2]{BN-1999}} \label{BN}
	Let $n \ge 2 \footnote{\cite[Theorem 1.2]{BN-1999} assumes $n \ge 3$, this is not necessary in our version}$.
	Let $\Omega$ and $\Psi$ be cyclic conjugacy classes of $\GL(n,K)$. Assume that one of them is $(1,n-1)$-cyclic. 
	Then $\Omega \Psi$ contains all non-scalar matrices $M \in \GL(n,K)$ with determinant $\det M = \det \Omega \Psi$.
\end{theorem}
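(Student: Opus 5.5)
The plan is to prove Theorem~\ref{BN} by choosing a convenient conjugate of $M$ and then building explicit matrices $X\in\Omega$ and $Y\in\Psi$ with $XY=M$. Say $\Omega$ is $(1,n-1)$-cyclic, so $\Omega$ contains $\lambda\oplus A_0$, where $A_0$ is cyclic of size $n-1$ and $\lambda$ is not a root of the minimal polynomial of $A_0$.

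\textbf{Step 1: normal form for $M$.} Since $M$ is nonscalar, there is a vector $v$ with $v, Mv$ linearly independent. I conjugate $M$ so that in the basis $e_1=v$, $e_2=Mv,\dots$ its first column is $e_2$. More generally I would use the standard fact that a nonscalar matrix is similar to one with a prescribed cyclic-type block. The most useful form is to take $M$ similar to a matrix whose off-diagonal part in the first row and column can be chosen freely, i.e. a bordered form
\[
M \sim \begin{pmatrix} a & u^T \\ w & M_1\end{pmatrix}.
\]

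\textbf{Step 2: construct $X$ and $Y$.} I take $Y$ in $\Psi$ as a companion matrix $C(g)$, where $g$ is the characteristic polynomial of $\Psi$; this is possible because $\Psi$ is cyclic. I then require $X=MY^{-1}$ to be conjugate to $\lambda\oplus A_0$. For a cyclic target class, it suffices to show two things:
\begin{itemize}
\item[(i)] $X$ is cyclic;
\item[(ii)] $X$ has the prescribed characteristic polynomial $(t-\lambda)f_0(t)$.
\end{itemize}
A cyclic matrix is determined up to conjugacy by its characteristic polynomial, so (i) and (ii) together give $X\in\Omega$. Since $\det M=\det\Omega\Psi$, the determinant, i.e.\ the constant term, is automatically consistent. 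The remaining $n-1$ coefficients must then be matched using $n-1$ free parameters in the conjugate of $M$.

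This matching is done by choosing $M$ in a "rational canonical plus one free column" shape. Conjugating $M$ by matrices that fix the companion structure of $Y$ varies the last column of $X$ linearly. This is the classical trick (as in Sourour and Lev) where the characteristic polynomial of a companion-type matrix depends affinely on the last column.

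\textbf{Step 3: use the $(1,n-1)$ splitting.} I split into cases according to whether $M$ has an eigenvector whose eigenvalue is compatible with $\lambda$ times an eigenvalue of $\Psi$.
\begin{itemize}
\item In the generic case, $M$ is cyclic-like, and Step 2 works directly.
\item Otherwise, I use induction on $n$. Write $M=\begin{pmatrix}\mu & *\\ 0 & M'\end{pmatrix}$. I look for $X=\begin{pmatrix}\lambda & *\\ 0 & X'\end{pmatrix}$ and $Y=\begin{pmatrix}\mu/\lambda & *\\ 0& Y'\end{pmatrix}$, reducing to showing $M'\in \Omega'\Psi'$ in dimension $n-1$, where $\Omega'$ is the class of $A_0$. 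The induction hypothesis does not apply, because $A_0$ need not be $(1,n-2)$-cyclic. So instead I should reduce via Sourour-type results only when $M'$ is nonscalar.
\end{itemize}
The base case $n=2$ is a direct computation with $2\times2$ matrices.

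\textbf{Main obstacle.} The hardest part is the case where $M$ is nearly scalar, e.g.\ $M=\alpha I+N$ with $N$ of rank one, so that $M$ is not cyclic. Then the companion-matrix parametrization gives too few free parameters, and cyclicity of $X$ can fail. For this case I would build $X$ and $Y$ explicitly with Schur-complement computations on the rank-one perturbation. I would first try choosing $X=\lambda\oplus A_0$ and $Y=X^{-1}M$, and then adjust $A_0$ by a conjugation chosen to make $Y$ cyclic with the right characteristic polynomial. Here the eigenvalue $\lambda$ being outside the spectrum of $A_0$ is exactly what guarantees the needed invertibility.
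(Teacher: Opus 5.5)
There is a genuine gap. Your plan never supplies a mechanism that actually produces the factorization, and it never pins down where the $(1,n-1)$ hypothesis is used.

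\begin{itemize}
\item Step 1 is vacuous: every matrix is in bordered form.
\item In Step 2 the claimed free parameters do not exist. If, as you say, the conjugating matrices fix the companion matrix $Y=C(g)$, i.e.\ $T^{-1}YT=Y$, then $T^{-1}MT\,Y^{-1}=T^{-1}(MY^{-1})T$. So the characteristic polynomial of $X$ does not move at all. If they do not fix $Y$, the dependence on $T$ is not linear, and you give no argument for it. Even if you could match the characteristic polynomial, nothing in the plan forces $X$ to be cyclic.
\item The bare count ($n-1$ coefficients against $n-1$ parameters) would look exactly the same if the target class were irreducible. Yet $\Omega\Omega^{-1}$ contains no transvection when $\Omega$ is irreducible. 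So an argument of this type must break down precisely on matrices with $\mr(M)=1$, which is the case you list as the main obstacle and leave open.
\item Step 3 does not repair this. The triangular ansatz needs an eigenvalue $\mu\in K$ of $M$ and the eigenvalue $\mu/\lambda$ for $\Psi$, and neither exists in general. As you note, the induction hypothesis does not apply to $A_0$. Sourour's theorem concerns diagonalizable classes, not arbitrary cyclic ones.
\end{itemize}

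The missing idea is to put the rigid structure on $M$ and the free parameters into the factor from the \emph{arbitrary} cyclic class. Let the $(1,n-1)$-cyclic class contain $D\oplus\delta$; this is your $A_0\oplus\lambda$, and which class it is does not matter, since $\Omega\Psi=\Psi\Omega$.

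By Lemma \ref{cyc1}, a nonscalar $M$ is similar to a matrix whose leading $(n-1)\times(n-1)$ block is $0_1\oplus Q$ with $Q$ nonsingular. Lemma \ref{cyc2} with $\alpha=0$ then lets you write this block as $ND$ with $N=\Jordan_{n-1}'$, after replacing $D$ by a conjugate:
\[
P=\left[\begin{array}{cc} ND & q\\ p & \rho\end{array}\right].
\]

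Nonsingularity of $P$ forces $(pD^{-1})_{n-1}\neq 0$. So with $c=pD^{-1}$ the rows $c,cN,\dots,cN^{n-2}$ are linearly independent. Hence $C=\left[\begin{array}{cc} N & b\\ c & \gamma\end{array}\right]$ is cyclic for every $(b,\gamma)$, and its characteristic polynomial is an affine bijective function of $(b,\gamma)$. This is the correct version of your free-column trick: it lets you choose $C$ in the other class.

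Write $C^{-1}P=\left[\begin{array}{cc} X & v\\ u & \delta'\end{array}\right]$.
\begin{itemize}
\item The first row of $N$ is zero and $b_1\neq 0$ (since $C$ is nonsingular), which gives $u=0$.
\item Then $NX=ND$ and $cX=p$ give $X=D$.
\item Comparing determinants gives $\delta'=\delta$.
\end{itemize}
Finally, $\left[\begin{array}{cc} D & v\\ 0 & \delta\end{array}\right]$ is similar to $D\oplus\delta$ because $\delta$ is not an eigenvalue of $D$. That is the one place the $(1,n-1)$ hypothesis enters. The argument covers every nonscalar $M$, including $\mr(M)=1$, uniformly, with no induction or case split.
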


\begin{theorem}{\rm \cite[Theorem 4.2]{BN-1999}} \label{BN-2}
	Let $n \ge 2$. Let $\Omega$ and $\Psi$ be cyclic similarity classes of $\GL(n,K)$. Then $\Omega \Psi$ contains all nonprimary matrices $M \in \GL(n,K)$ with determinant $\det M = \det \Omega \Psi$.
\end{theorem}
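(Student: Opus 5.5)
The plan is to replace $M$ by a conjugate of block form and then build the factors $A\in\Omega$, $B=A^{-1}M\in\Psi$ explicitly, controlling both characteristic polynomials by Schur complement and determinant‑lemma computations. Since $M$ is nonprimary, its characteristic polynomial has at least two distinct monic irreducible factors. So after conjugation $M=M_1\oplus M_2$ with $M_1\in\GL(m,K)$, $M_2\in\GL(k,K)$, $m,k\ge 1$, $m+k=n$, and $\gcd(\chi_{M_1},\chi_{M_2})=1$. Write $f=\chi_\Omega$ and $g=\chi_\Psi$. Then $\det M=\det\Omega\Psi$ becomes $f(0)g(0)=(-1)^n\det M$ (up to the usual sign convention). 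We need $A$ with $\chi_A=f$, $A$ cyclic, and $B=A^{-1}M$ cyclic with $\chi_B=g$. The $B$-condition is governed by the pencil: $\chi_{A^{-1}M}(t)=\det(tI-A^{-1}M)=\det(A)^{-1}\det(tA-M)$. So the targets are
\[
\det(tI-A)=f(t), \qquad \det(tA-M)=\det(A)\,g(t),
\]
together with cyclicity of $A$ and $B$.

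I will look for $A$ in ``rank-one coupled'' block form
\[
A=\begin{pmatrix} A_1 & x\,y^{T}\\ u\,v^{T} & A_2\end{pmatrix},
\]
where $A_1$ and $A_2$ are fixed cyclic (companion) matrices chosen relative to $M_1$ and $M_2$. A natural choice is to take $A_i$ so that $tA_i-M_i$ is as simple as possible, e.g.\ $A_i$ a scalar multiple of a companion matrix commuting with $M_i$ when $M_i$ is cyclic. If $M_i$ is not cyclic, I first split off further, or reduce via Theorem \ref{BN} to the case where one block is $1\times 1$.

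By the Schur complement and the matrix determinant lemma, $\det(tI-A)$ equals $\det(tI-A_1)\det(tI-A_2)$ minus a term of the form $\bigl(v^T\operatorname{adj}(tI-A_1)x\bigr)\bigl(y^T\operatorname{adj}(tI-A_2)u\bigr)$. Because $A_1$ and $A_2$ are cyclic, the bilinear expressions $v^T\operatorname{adj}(tI-A_i)x$ range over all polynomials of degree $<\dim A_i$. The same computation for $tA-M=(tA_1-M_1)\oplus(tA_2-M_2)$ plus the rank-one corrections expresses $\det(tA-M)$ as $\det(tA_1-M_1)\det(tA_2-M_2)$ minus a product of two analogous polynomials.

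The two prescribed identities therefore become a system of polynomial equations for a few free polynomials of bounded degree. Coprimality of $\chi_{M_1}$ and $\chi_{M_2}$ is what should make this system solvable: reducing modulo $\chi_{M_1}$ and modulo $\chi_{M_2}$ decouples the unknowns, and the Chinese remainder theorem then glues them back. The determinant hypothesis is exactly the compatibility condition on constant terms. The small cases $n=2$, or $m=1$ or $k=1$, are covered directly by Theorem \ref{BN}.

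Next I will check cyclicity. For $A$, cyclicity follows from the rank-one coupling when the coupling polynomial is nonzero on the common roots. For $B$, I use the criterion that $B$ is cyclic iff $\rank(\lambda A-M)\ge n-1$ for every $\lambda\in\overline K$. Since $M_1$ and $M_2$ have no common eigenvalue, at any $\lambda$ at most one diagonal block $\lambda A_i-M_i$ drops rank. By choosing $A_i$ suitably, that block drops rank by at most one, so the rank bound holds.

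The main obstacle is the middle step: solving both determinant identities simultaneously with the same rank-one data $x,y,u,v$. The two identities share the vectors, and the degrees must match exactly, with leading coefficients forced by monicity of $f$ and $g$. I would first try to fix $y,u$ (hence one factor polynomial) to make the $f$-equation linear in the remaining data, and then show that the $g$-equation, reduced modulo $\chi_{M_1}\chi_{M_2}$, is still solvable. If that fails over small fields, I fall back on induction on $n$, peeling off a $1\times 1$ block with Theorem \ref{BN}.
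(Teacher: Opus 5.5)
You leave the decisive step unproved, and the arguments meant to support it are not correct. Everything rests on finding $x,y,u,v$ with
\[
\chi_{A_1}\chi_{A_2}-p_1p_2=f,\qquad \det(tA_1-M_1)\det(tA_2-M_2)-t^2q_1q_2=\det(A)\,g,
\]
where $p_1=v^T\operatorname{adj}(tI-A_1)x$, $q_1=v^T\operatorname{adj}(tA_1-M_1)x$, $p_2=y^T\operatorname{adj}(tI-A_2)u$ and $q_2=y^T\operatorname{adj}(tA_2-M_2)u$.

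\textbf{The central system.} These are bilinear equations in which $p_1,q_1$ (resp.\ $p_2,q_2$) come from the same vectors. You give no reason why they have a solution over $K$, and they can fail outright. With $A_1,A_2$ fixed, as you propose, the first identity forces $\chi_{A_1}\chi_{A_2}-f$ to split over $K$ into factors of degrees $\le m-1$ and $\le k-1$. That is impossible if this polynomial is irreducible of degree $n-2$ and $m,k\ge 2$. Moreover, $\operatorname{tr}A$ and $\operatorname{tr}B^{-1}=\operatorname{tr}(M^{-1}A)$ depend only on $A_1,A_2$, which imposes two more conditions you never address.

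\textbf{The CRT step.} Reducing modulo $\chi_{M_1}$ and $\chi_{M_2}$ and gluing by the Chinese remainder theorem has no content. Neither identity involves $\chi_{M_i}$; they involve $\det(tA_i-M_i)$, $f$ and $g$.

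\textbf{Cyclicity of $B$.} Your check is wrong. $\lambda A_i-M_i$ is singular when $\lambda$ is an eigenvalue of $A_i^{-1}M_i$, not of $M_i$. So disjoint spectra of $M_1,M_2$ do not stop both diagonal blocks from dropping rank at the same $\lambda$: take $A_i=M_i$, your own ``commuting'' choice, and $\lambda=1$. The rank-one couplings can lower the rank further.

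\textbf{The fallback.} Theorem \ref{BN} assumes that $\Omega$ or $\Psi$ is $(1,n-1)$-cyclic. It says nothing about $M$ having a $1\times 1$ block. So it does not apply directly to the cases $m=1$, $k=1$ or $n=2$, nor to an induction peeling off $1\times1$ blocks of $M$. It can be brought to bear only via $M\in\Omega\Psi\iff\Omega\cap\mathcal{C}\Psi^{-1}\neq\emptyset$, with $\mathcal{C}$ the class of $M$. Even then it needs $M$ itself to be cyclic and $(1,n-1)$-cyclic, which fails for, e.g., $\lambda\Idm_{n-1}\oplus(\mu)$, $\lambda\neq\mu$, $n\ge 3$.

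\textbf{The missing idea.} Use coprimality differently. If $\gcd(\chi_{M_1},\chi_{M_2})=1$, every matrix $\begin{pmatrix} M_1' & X\\ 0 & M_2'\end{pmatrix}$ with $M_i'\sim M_i$ is similar to $M_1\oplus M_2$, because the Sylvester equation $M_1'Y-YM_2'=X$ is solvable. So you never need to control the characteristic polynomials of the factors through determinant identities. It suffices to find $P\in\Omega$, $Q\in\Psi$ in normal forms, which fix their classes automatically, whose product is block upper triangular with diagonal blocks in the classes of $M_1$ and $M_2$.

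Lemma \ref{cyc2} does exactly this for $m=1$: $\begin{pmatrix}\alpha & b\\ 0 & D\end{pmatrix}\sim(\alpha)\oplus D$ when $\alpha$ is not an eigenvalue of $D$. The general case needs a block analogue of Lemmas \ref{cyc1b} and \ref{cyc2}. The paper does not reprove the theorem; it refers to \cite{BN-1999} and \cite[Lemma 1.5]{KN-2025}.
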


Theorem \ref{BN-2} is a generalization of a result of Lev 
\cite[Lemma 5]{Lev-1993}, who additionally assumes that $M$ has an eigenvalue in $K$.
We give a new short proof of \ref{BN}.
 For a short proof of \ref{BN-2} see \cite[Lemma 1.5]{KN-2025}.

Our main result is 

\begin{theorem} \label{main-1}
	Let $n, m \in \mathbb{N}$,  $n \ge 4$, and $m \le n-1$. If $K = \GF(3)$ let $n \ge 5$.
	Let $\Omega$ and $\Psi$ be $(m,n-m)$-cyclic conjugacy classes of $\GL(n,K)$. Then  $\Omega \Psi$ contains all nonscalar matrices of $\GL(n,K)$ with determinant $\det \Omega \Psi$.
\end{theorem}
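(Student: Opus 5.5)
By Theorem \ref{BN-2} we may assume that the target matrix $P$ is primary, i.e.\ its characteristic polynomial is a power $f^r$ of a single irreducible polynomial $f$. By Theorem \ref{BN} we may assume $2 \le m \le n-2$, which explains the hypothesis $n \ge 4$. Write $\Omega \ni A_1 \oplus B_1$ and $\Psi \ni A_2 \oplus B_2$ with $\dim A_i = m$ and $\dim B_i = k = n-m$. Cyclicity of the sums forces the characteristic polynomials of $A_i$ and $B_i$ to be coprime.

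The strategy is to choose a conjugate $P'$ of $P$ in block form
\[
P' = \begin{pmatrix} X & Y \\ Z & W \end{pmatrix}, \qquad X \in \M(m,K),\ W \in \M(k,K).
\]
We then try to factor it as $P' = \begin{pmatrix} A & 0 \\ U & B \end{pmatrix}\begin{pmatrix} A' & V \\ 0 & B' \end{pmatrix}$, a lower block triangular matrix times an upper block triangular one. The first factor should be conjugate to $A_1 \oplus B_1$ and the second to $A_2 \oplus B_2$. Because the characteristic polynomials of the diagonal blocks are coprime, a block triangular matrix with diagonal blocks similar to $A_1$ and $B_1$ is automatically conjugate to $A_1 \oplus B_1$: the relevant Sylvester equation is solvable. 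Multiplying out gives four equations:
\begin{itemize}
\item $X = AA'$;
\item $Y = AV$;
\item $Z = UA'$;
\item $W = UV + BB'$, i.e.\ $W - ZX^{-1}Y = BB'$.
\end{itemize}
So if $X$ is invertible, the problem reduces to two smaller ones:
\begin{enumerate}
\item $X \in \mathcal{C}(A_1)\,\mathcal{C}(A_2)$ in $\GL(m,K)$;
\item the Schur complement $S = W - ZX^{-1}Y$ lies in $\mathcal{C}(B_1)\,\mathcal{C}(B_2)$ in $\GL(k,K)$.
\end{enumerate}
Here $\mathcal{C}(\cdot)$ denotes the conjugacy class, and $U$, $V$ are then determined.

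Since $A_1, A_2$ (and likewise $B_1, B_2$) are cyclic, Theorem \ref{BN-2} applied in dimensions $m$ and $k$ shows that any nonprimary $X$, respectively $S$, with the correct determinant lies in the required product. By Theorem \ref{BN}, any nonscalar $X$ lies there when one of $A_1, A_2$ is itself $(1,m-1)$-cyclic. The key step is therefore a normal-form lemma. Given a nonscalar primary $P$ and a prescribed value $\delta = \det A_1A_2$, we must find a conjugate $P'$ whose leading $m\times m$ block $X$ is nonprimary (or at least nonscalar and cyclic) with $\det X = \delta$, and whose Schur complement $S$ is also nonprimary. Its determinant then automatically equals $\det P/\delta = \det B_1B_2$.

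To construct $P'$, start from the rational canonical form of $P$. Since $P$ is nonscalar, it has a cyclic summand of dimension at least $2$, or at least two non-equal invariant factors. Conjugate by suitable elementary or unipotent block matrices so that the leading principal $m\times m$ block becomes a companion-type matrix whose last column is free. This leaves freedom to prescribe its characteristic polynomial, for instance $(x-a)g(x)$ with $a \ne 0$ chosen so that the polynomial is not a prime power and the determinant is $\delta$. The bordering entries are then tuned so that the Schur complement, whose characteristic polynomial is controlled via $\det P' = \det X \det S$ and the remaining free entries, is also a product of two coprime factors.

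\textbf{Main obstacle.} I expect the hardest part to be this simultaneous control of $X$ and $S$ for all primary $P$, including $P$ of the form $\lambda I + N$ with $N$ of rank one, where little freedom exists. The small field $\GF(3)$ is the other difficulty: we need a $2$-factor polynomial with a nonzero root avoiding the forced values, and when $m=2$ or $k=2$ there may be too few choices, which is why $n \ge 5$ is required there. For these residual cases I would first try to place a transvection-like part entirely inside one diagonal block and use Theorem \ref{BN} in dimension $2$. Failing that, I would handle them by a direct $2\times 2$ computation.
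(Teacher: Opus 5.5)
\textbf{There is a genuine gap: the normal-form lemma your argument rests on is never proved, the route you sketch for it fails, and its target is unattainable in several cases the theorem must cover.}

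Your reduction is the same as the paper's. Theorems \ref{BN} and \ref{BN-2} reduce to primary $P$ and $2\le m\le n-2$. The block LU factorization of Lemma \ref{Schur-1} then reduces everything to finding a conjugate $P'$ with $\PC_m(P')\in\Omega_1\Psi_1$ and $\SC_m(P')\in\Omega_2\Psi_2$. The ``normal-form lemma'' you then need is the whole substance of the proof, and you do not prove it.

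The route you sketch does not work. Since $X-\lambda\Idm_m$ is a submatrix of $P'-\lambda\Idm_n$, every conjugate satisfies $\rank(X-\lambda\Idm_m)\le\mr(P)$ for a suitable $\lambda$. So $X$ cannot be a companion matrix once $\mr(P)<m-1$. ``Tuning the bordering entries'' so that $S$ is nonprimary at the same time is exactly the difficulty, and you leave it unaddressed.

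The idea you are missing is Sourour's antiinvariant-subspace theorem (Lemmas \ref{Sourour}, \ref{Schur-1a}, \ref{Schur-1b}). For $m\le\mr(P),\frac n2$ it gives conjugates $M_S$ with $\PC_m(M_S)=S$ arbitrary and $\SC_m(M_S)=\left[\begin{array}{cc} B&0\\ D&-CS^{-1}\end{array}\right]$, where $B,C,D$ are fixed. One then only needs $S$ with $\det S=\delta$ such that $S$ and $CS^{-1}$ are nonprimary. Sourour's triangular factorization theorem, plus an explicit construction over $\GF(2)$, provides this when $|K|\ge 4$ or $m,\mr(P)\ge 3$ (Lemma \ref{Schur-2}).

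Moreover, your target ``$X$ and $S$ both nonprimary'' cannot be reached in exactly the cases where the paper does most of its work.

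\emph{The case $\mr(P)=1$.} Here $P=\lambda T$ with $T$ a transvection, so $X$ has the eigenvalue $\lambda$ with multiplicity at least $m-1$. Hence $X$ is nonprimary only if $\det X\ne\lambda^m$. Take $\Psi_1=\lambda\Omega_1^{-1}$ with $\Omega_1$ irreducible. Then the only admissible corner is the scalar $\lambda\Idm_m$; $\lambda$ times a transvection is excluded because $\Omega_1\Omega_1^{-1}$ contains no transvection. Your normal form (``nonprimary, or at least nonscalar and cyclic'') explicitly rules this corner out. The paper handles $\mr(P)=1$ separately by Lemma \ref{simple-lemma}, using that $(m,n-m)$-cyclic classes are reducible.

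\emph{Small fields.} Every element of $\GL(2,2)$ is primary. Over $\GF(3)$ the only nonprimary characteristic polynomial of degree $2$ is $x^2-1$, of determinant $-1$. So over $\GF(2)$ a block of size $2$ can never be nonprimary, and over $\GF(3)$ it cannot be when its prescribed determinant is $1$. Your fallback through Theorem \ref{BN} in dimension $2$ requires a $(1,1)$-cyclic class; $\GL(2,2)$ has none and $\GL(2,3)$ has only $\mathbb{S}_2$.

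The paper handles these cases, together with $\mr(P)=2$ over $|K|\le 3$, by the product tables of Remarks \ref{Schur-3} and \ref{Schur-6} and the explicit conjugates of Lemmas \ref{Schur-4}--\ref{Schur-16}. Your proposal supplies none of this.
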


There is a conjecture  attributed to J. G. Thompson \cite[Problem 9.2]{Kourovka-2026}
that a finite simple group
$G$ has conjugacy classes $C$ with covering number 2, i.e. $G = C^2$.
As a  corollary, we obtain the following theorem of Lev  which proves Thompson's conjecture for the simple linear groups.

\begin{corollary} {\rm (\cite[Theorem 3]{Lev-1999})}\label{Lev-1}
	Let $n \ge 3$ if $|K| \le 3$. Then 
	$\PSL(n,K)$ has a conjugacy class with covering number 2.
\end{corollary}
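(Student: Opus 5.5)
The approach is to obtain the corollary by choosing a suitable cyclic class $\Omega\subseteq\SL(n,K)$ with $\Omega\sim\Omega^{-1}$, or more generally $\Omega\sim z\Omega^{-1}$ for some scalar $z$ of determinant $1$. Then apply Theorem \ref{main-1} (or Theorem \ref{BN}) with $\Psi=\Omega$. These give that $\Omega\Omega$ contains every nonscalar matrix of determinant $\det\Omega^2=1$. Hence the image $C$ of $\Omega$ in $\PSL(n,K)$ satisfies $C^2\supseteq \PSL(n,K)\setminus\{1\}$. The identity is also covered, since if $\Omega\ni X$ and $\Omega\ni zX^{-1}$ then $zI\in\Omega\Omega$. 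So $C^2=\PSL(n,K)$, and everything reduces to exhibiting such an $\Omega$ that is $(m,n-m)$-cyclic in each case.

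\emph{Case $n=2$, $|K|\ge 4$.} Take $\Omega$ to be the class of $\operatorname{diag}(a,a^{-1})$ with $a\ne a^{-1}$; such an $a$ exists because $|K|\ge4$. This $\Omega$ is $(1,1)$-cyclic, has determinant $1$, and swapping the coordinates shows $\Omega=\Omega^{-1}$. Theorem \ref{BN} applies for $n\ge2$.

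\emph{Case $n\ge3$, $\ch K\ne 3$.} Take $\Omega$ to be the class of $\Jordan_{n-2}(1)\oplus C(x^2+x+1)$, where $C(\cdot)$ denotes the companion matrix. Since $\ch K\ne3$, the polynomials $(x-1)^{n-2}$ and $x^2+x+1$ are coprime, so $\Omega$ is cyclic and $(n-2,2)$-cyclic. It has determinant $1$. It is similar to its inverse, because $\Jordan_{n-2}(1)^{-1}\sim \Jordan_{n-2}(1)$ and $x^2+x+1$ is self-reciprocal. For $n=3$ this is $(1,2)$-cyclic and Theorem \ref{BN} applies; for $n\ge4$ Theorem \ref{main-1} applies.

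\emph{Case $\ch K=3$, $n\ge 3$.} Now $\ch K\ne 2$, so take $\Omega$ to be the class of $\Jordan_{n-2}(1)\oplus(-\Jordan_2(1))$. Its eigenvalues $1$ and $-1$ are distinct, so it is cyclic and $(n-2,2)$-cyclic. Its determinant is $(-1)^2=1$, and it is similar to its inverse. For $n=3$ Theorem \ref{BN} applies. For $n\ge4$ Theorem \ref{main-1} applies, except when $K=\GF(3)$ and $n=4$.

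The main obstacle is this one excluded case of Theorem \ref{main-1}, $K=\GF(3)$, $n=4$, which I plan to handle with a $(1,3)$-cyclic class so that Theorem \ref{BN} applies instead. Let $g=x^2+x-1$; over $\GF(3)$ it has no roots, so it is irreducible. Its roots $r,s$ satisfy $rs=-1$, so the map $x\mapsto -1/x$ permutes them. Take $\Omega$ to be the class of $(1)\oplus(-1)\oplus C(g)$. It is cyclic since the factors $x-1$, $x+1$, $g$ are pairwise coprime, it is $(1,3)$-cyclic, and its determinant is $1\cdot(-1)\cdot(-1)=1$. The map $x\mapsto -1/x$ swaps $1$ and $-1$ and swaps $r$ and $s$, so $\Omega\sim -\Omega^{-1}$. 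Since $\det(-I)=1$, we get $-I\in\Omega\Omega$, which covers the identity of $\PSL(4,3)$. Theorem \ref{BN} gives all nonscalar elements of determinant $1$, which completes the argument.
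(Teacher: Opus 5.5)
Your overall scheme (take $\Psi=\Omega$ with $\Omega\sim z\Omega^{-1}$, apply Theorem \ref{BN} or \ref{main-1}, and cover the identity via $X\cdot zX^{-1}=z\Idm_n$) is the paper's. There is, however, a genuine gap: you never check that $\Omega$, which is a $\GL(n,K)$-class, is a \emph{single} $\SL(n,K)$-class.

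For cyclic $X$ the centralizer is $K[X]^*$, and the $\GL(n,K)$-class of $X$ breaks into $[K^*:\det(K[X]^*)]$ classes of $\SL(n,K)$. If this index exceeds $1$, the image $C$ of $\Omega$ in $\PSL(n,K)$ is a union of several conjugacy classes. Knowing that $\Omega\Omega$ contains all nonscalar matrices of determinant $1$ then does not give this for the square of any one of those classes, since a factorization $M=AB$ may need $A,B$ from different $\SL$-classes. Likewise, $X\sim zX^{-1}$ only covers the identity if the conjugation can be done inside $\SL(n,K)$. This is why the paper asserts ``$\Omega$ is a conjugacy class of $\SL(n,K)$''. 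It chooses its polynomials so that a factor on which $\det$ of the centralizer is onto $K^*$ occurs with multiplicity one: $x-\delta$, $x-1$, or an irreducible cubic over $\GF(3)$.

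Your choices violate this in several cases.
\begin{itemize}
\item For $\ch K=3$ and $n$ even, $X=\Jordan_{n-2}(1)\oplus(-\Jordan_2(1))$ gives $\det f(X)=f(1)^{n-2}f(-1)^2\in (K^*)^2$. For $K=\GF(3)$, $n=6$, every element of the centralizer has determinant $1$, so $\Omega$ splits into two $\SL(6,3)$-classes. Their images in $\PSL(6,3)$ are distinct, because $-\Omega\ne\Omega$: the minimal polynomial of $-X$ is $(x+1)^4(x-1)^2$.
\item For $K=\mathbb{R}$ and $n$ even, take $X=\Jordan_{n-2}(1)\oplus C$ with $C$ the companion matrix of $x^2+x+1$, and let $\omega\in\mathbb{C}$ be a root of $x^2+x+1$. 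Then $\det f(X)=f(1)^{n-2}|f(\omega)|^2>0$, so again you get two classes.
\end{itemize}

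To repair the argument:
\begin{itemize}
\item For $|K|\ge 4$, use the paper's class with minimal polynomial $(x-1)^{n-2}(x-\delta)(x-\delta^{-1})$, $\delta\ne\pm1$. The simple eigenvalue $\delta$ makes $\det$ onto $K^*$ on the centralizer. The class is also $(1,n-1)$-cyclic, so Theorem \ref{BN} alone suffices.
\item Over $\GF(2)$ nothing splits, so your choice is fine.
\item Over $\GF(3)$ your classes are fine for odd $n$ and for $n=4$. For even $n\ge 6$ you need a different class, e.g.\ the paper's $(x^2+1)^t(x^3-x^2-1)(x^3+x-1)$, or $(x-1)^{n-2}(x^2+1)$, which works because the norm $\GF(9)^*\to\GF(3)^*$ is onto.
\end{itemize}
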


\begin{proof}
	See also \cite{BN-1999} if  $n$ is odd or $K \ne \GF(2)$. 
	Let $\Omega$ be the cyclic conjugacy class of $\GL(n,K)$ with minimal polynomial $\mu(\Omega) = p(x)$, where $p(x)$ is as follows:
	\begin{enumerate}
		\item If $|K| \ge 4$ let $p(x) = (x-1)^{n-2}(x-\delta)(x-\delta^{-1})$, where $\delta \ne \pm 1$.  
		\item If $K = \GF(3)$ and $n = 2t + 6$ let $p(x) = (x^2+1)^t (x^3-x^2-1)(x^3+x-1)$
		\item If $K = \GF(3)$ and $n = 4$ let $p(x) = (x-1)(x+1) (x^2+ x -1)$.
		\item If $K = \GF(3)$ and $n = 2t+1$ let $p(x) = (x-1) (x^2+1)^t$.
		\item If $K = \GF(2)$ and $n = 2t+1$ let $p(x) = (x+1) (x^2+x+1)^t$. 
		\item If $K = \GF(2)$ and $n = 2t + 2$ let $p(x) = (x+1)^2 (x^2+x+1)^t$
	\end{enumerate}
	Then $\Omega$ is a conjugacy class of $\SL(n,K)$. By theorems \ref{BN} and \ref{main-1}, $\Omega^2$ contains all nonscalar matrices of $\SL(n,K)$. And $\Idm_n \in \Omega^2$ as $\Omega = \Omega^{-1}$ except when  $K = \GF(3)$ and $n = 4$. In the exceptional case
	$K = \GF(3), n = 4$, we have $\Omega = -\Omega^{-1}$ so that $-\Idm_n \in \Omega^2$.
\end{proof}

Theorem \ref{main-1}  can  also be used to prove that certain  simple unitary and orthogonal groups have  conjugacy classes of covering number 2. Here we show 

\begin{theorem}  \label{main-2} 
	 Let $h$ be a hyperbolic hermitian form on a finite-dimensional vector space $V$ over a field $K$. Let  $\dim V \ge 4$. If $K \le 9$ let $\dim V \ge 8.$
	Then  $\PSU(V,K, h)$ has a conjugacy class with covering number 2.
\end{theorem}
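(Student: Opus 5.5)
Here is the plan: embed the whole problem into $\GL(n,L)$, where $L$ is the fixed field of the involution $\sigma$ attached to $h$, or rather work with the hyperbolic structure directly. Let $\dim V = 2r$, so that $V = U \oplus U'$ with $U, U'$ totally isotropic of dimension $r$. We will use the block matrices $A \oplus (A^{*})^{-1}$, where $A \in \GL(U)$ and $A^{*}$ is the $h$-adjoint transported to $U'$; these lie in $\UG(V,h)$. The conjugacy class we choose is the class $C$ in $\SUG(V,h)$ of an element $g = A \oplus (A^{*})^{-1}$. Here $A$ is cyclic on $U$ with carefully chosen characteristic polynomial $f$. We arrange that $f$ and its $\sigma$-reciprocal $\tilde f$ are coprime, and that $g$ is cyclic on $V$ with minimal polynomial $f\tilde f$. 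Then $g$ is $(r,r)$-cyclic in $\GL(V)$, with $\GL$-class $\Omega$. Choosing $f$ so that $f\tilde f$ is $\sigma$-self-reciprocal up to the right scalar also makes $\Omega = \Omega^{-1}$, which gives $g^{-1} \in \Omega$. For $|K| > 9$, take $f$ built from $(x-\delta)$ and $(x-1)$ factors as in case (1) of Corollary \ref{Lev-1}. For small $K$, take products of irreducible factors in the spirit of cases (2)--(6). This explains the requirement $\dim V \ge 8$: it gives enough room to pick $f$ with $\gcd(f,\tilde f)=1$ and $\det A \cdot \sigma(\det A)^{-1} = 1$.

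The key steps are as follows. (i) We show that two elements of $\UG(V,h)$ that are cyclic and conjugate in $\GL(V)$ are conjugate in $\UG(V,h)$. This holds because the centralizer of a cyclic element is $K[g]^\times$ and the relevant Hilbert 90/norm argument applies. We then need to pass from $\UG$-conjugacy to $\SUG$-conjugacy. For this we check that $\det$ restricted to the unitary centralizer $K[g]^\times \cap \UG$ hits all determinant-one-norm elements, which is why we want $f$ coprime to $\tilde f$: the centralizer then contains $\{a \oplus (a^*)^{-1}\}$ with arbitrary units $a \in K[A]^\times$. (ii) Given a nonscalar $P \in \SUG(V,h)$, Theorem \ref{main-1} in $\GL(V)$ gives $P = XY$ with $X, Y \in \Omega$. (iii) This factorization has to be made unitary, which is the real issue. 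I would mimic Theorem \ref{main-1}'s proof rather than invoke it as a black box. Since we need $P \in C\cdot C$ with $C \subseteq \UG$, the natural route is to write $P = X Y$ with $X = g_1$ unitary in $\Omega$ and $Y = g_1^{-1}P$. Then $Y$ is automatically unitary, and it suffices to show $g_1^{-1}P \in \Omega$. So we reformulate as follows: find $g_1 \in \Omega\cap \UG(V,h)$ such that $P^{-1}g_1 \in \Omega^{-1} = \Omega$, i.e. $P^{-1}g_1$ has characteristic polynomial $f\tilde f$ and is cyclic.

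Step (iii) is the main obstacle. I would choose an $h$-hyperbolic decomposition adapted to $P$: use a $P$-invariant-ish isotropic subspace, with $P$ in block form relative to $U \oplus U'$. Then let $g_1$ range over unitary matrices that are block-triangular with respect to $U \oplus U'$, namely $g_1 = \begin{pmatrix} A & AS \\ 0 & (A^*)^{-1}\end{pmatrix}$ with $S$ skew-hermitian. The characteristic polynomial of $P^{-1}g_1$ can then be controlled by a Schur complement computation, reducing to a $\GL(r,K)$ problem on $U$: the block of $P^{-1}$ times $A$ must be made cyclic with prescribed characteristic polynomial. That is exactly a situation where Theorem \ref{BN-2}, Theorem \ref{Lev} or Theorem \ref{main-1} in dimension $r$ can be applied. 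The free parameter $S$ is used to adjust the off-diagonal part. Where $P$ admits no convenient isotropic subspace, for instance when $P$ is close to scalar or a unitary transvection, I would treat it separately by an explicit construction. Finally, (iv) we pass to $\PSU$: every nonscalar element of $\SUG$ lies in $C^2$, and scalars are covered since $\Idm \in C C^{-1} = C^2$. It follows that the image of $C$ in $\PSU(V,K,h)$ has covering number 2.
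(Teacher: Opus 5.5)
\textbf{The corner lemma is missing.} Your architecture matches the paper's. You take a strictly hyperbolic cyclic class $\Omega$ with minimal polynomial $f\tilde f$, $\gcd(f,\tilde f)=1$ and $\Omega=\Omega^{-1}$. A Hilbert~90 argument makes $\Omega$ a single $\SUG$-class. A block-triangular factorization relative to a hyperbolic splitting $V=U\oplus U'$ then reduces everything to a product theorem in $\GL(r,K)$, $r=\dim V/2$, applied to an $r\times r$ corner. Your $g_1^{-1}P$ formulation is the paper's block LU factorization with $U$ and $U'$ interchanged.

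The gap is that step (iii) stops exactly where the real input is needed. To apply Theorem \ref{BN} or \ref{main-1} to the corner $E$ of a unitary conjugate $Q$ of $P$ (or of $P^{-1}$), you need $E$ to be invertible and nonscalar, with the one determinant that the product of the two $\GL(r,K)$-classes allows. This is B\"unger's result, Lemma \ref{Schur-17}: every nonscalar element of $\SUG(2r,K)$ is $\SUG$-conjugate to a $Q$ with $\PC_r(Q)$ nonscalar of prescribed determinant $a\in F$. You offer nothing in its place. A ``$P$-invariant-ish'' isotropic subspace is the wrong tool. If $U$ is $P$-invariant, the corner is $P|_U$, whose determinant is dictated by $P$ and cannot be prescribed. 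Moreover, a general $P$ need not have an invariant Lagrangian at all. ``Explicit constructions'' for near-scalar elements and transvections do not reach a general nonscalar $P$. So the substantive part of the theorem is not proved.

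\textbf{The rest is routine once the corner is controlled.} Given such a $Q$, nothing needs tuning. Let $Q$ be unitary with invertible corner $A=\PC_r(Q)$ and upper right block $B$. Then $\SC_r(Q)=\overline{A^+}$ and $A^{-1}B$ is skew-hermitian. So if $A=RS$ with $R,S$ in the $\GL(r,K)$-class with characteristic polynomial $f$, Lemma \ref{Schur-1} gives
\[
Q=\left[\begin{array}{cc} R & 0\\ CS^{-1} & \overline{R}^+\end{array}\right]\left[\begin{array}{cc} S & R^{-1}B\\ 0 & \overline{S}^+\end{array}\right].
\]
Both factors are automatically unitary. They lie in $\Omega$ because their diagonal blocks are cyclic with the coprime characteristic polynomials $f$ and $\tilde f$.

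\textbf{Your choice of $f$ also has errors.}
\begin{enumerate}
\item A factor $x-1$ is its own conjugate reciprocal. So borrowing case (1) of Corollary \ref{Lev-1} violates your own requirement $\gcd(f,\tilde f)=1$. The paper uses $(x-\lambda)(x-\overline\lambda)(x-\alpha)^{r-2}$ with $\alpha\in F\setminus\{0,\pm1\}$, $\lambda\notin F$ and $\lambda\overline\lambda\neq 1$.
\item $\Omega=\Omega^{-1}$ requires $f\tilde f$ to be reciprocal in the ordinary sense, for example $f\in F[x]$. That $f\tilde f$ is $\sigma$-self-reciprocal is automatic and proves nothing.
\item Over $\GF(4)$ every $\beta\in K^*$ satisfies $\beta\overline\beta=1$, so $f$ cannot have a linear factor. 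Hence Theorem \ref{BN} is unavailable, and Theorem \ref{main-1}, with its hypothesis $r\ge 4$, must be used. That, not mere room for coprimality, is why $\dim V\ge 8$ is needed there.
\end{enumerate}
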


This improves Bünger's result \cite[Satz 1.5.5]{Bunger-1997}.
In his proof, Bünger uses Lev's theorem  \cite[Theorem 2]{Lev-1994}. So he had to assume that $|K| > 9$.

\begin{remark} 
	If $K = \GF(4)$ and $\dim V =4$, then $\PSU(V,K, h)=\PSU(4,2)$ is isomophic to $\PSp(4,3)$.
	By \cite[Corollary 1.2]{KN-2025}, $\PSp(4,3)$ has a conjugacy class of covering number two.
\end{remark}

Using the computer algebra system GAP \cite{GAP2026}, we can show

\begin{remark} 
	Let $q \in \{2,3\}$.
	Let $\Omega$ be the conjugacy class of $\SUG(6,q)$ with minimal polynomial 
	$(x^3 - x -1)(x^3 +x^2  -1)$. Then
	\begin{enumerate}
		\item $\SUG(6,3) = \Omega^2 \cup -\Idm_6$.
		\item $\SUG(6,2) = \Omega^2 \cup \{\lambda \Idm_6, \lambda^2 \Idm_6\}$, where $\lambda \in \GF(4), \lambda \ne 0, 1$.
	\end{enumerate}
\end{remark}

\section{Proof of  theorem \ref{BN}} 
In \cite{BN-1999}, the authors used singular versions of Sourour's theorem \cite[Theorem 1]{Sourour-1986} and theorem \ref{BN-2} to show that the product of a cyclic similarity class $\Xi$ and a nilpotent cyclic similarity class $\Sigma$ contains all nonnilpotent matrices with nullity one. Here we prove  the weaker result that $\Xi \Sigma$ contains all matrices $0_1 \oplus N$. Then we proceed as in \cite{BN-1999}. Let $\M(n, K)$ denote the set of all $n \times n$-matrices over $K$.

\begin{lemma} \label{cyc1}
	Let $M \in \M(n, K)$ and $Z \in \M(n-1, K)$ be cyclic. Then $M$ is similar to a matrix $P$ with $\PC_{n-1}(P) = Z$.
\end{lemma}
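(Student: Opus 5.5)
The plan is to work in a basis adapted to $Z$, write $P$ as a bordering of $Z$, and use the $n$ free entries of the last column to force the characteristic polynomial of $P$ to equal that of $M$ while keeping $P$ cyclic. I read $\PC_{n-1}(P)$ as the leading principal $(n-1)\times(n-1)$ submatrix of $P$. Two cyclic matrices with the same characteristic polynomial are similar, so it suffices to produce a cyclic $P$ with $\PC_{n-1}(P)=Z$ and characteristic polynomial $f=\chi_M$.

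First I replace $Z$ by the companion matrix of $g=\chi_Z=x^{n-1}+g_{n-2}x^{n-2}+\dots+g_0$. This is harmless: if $Z=SZ_0S^{-1}$, then $P_0\mapsto (S\oplus 1)P_0(S\oplus 1)^{-1}$ carries a solution for $Z_0$ to one for $Z$. So assume $Ze_i=e_{i+1}$ for $i\le n-2$ and $Ze_{n-1}=-\sum_i g_ie_{i+1}$. I then take
\[
P=\begin{pmatrix} Z & b\\ e_{n-1}^{T} & d\end{pmatrix},
\]
with $b\in K^{n-1}$ and $d\in K$ still to be chosen. In words, the last row of $P$ is $(0,\dots,0,1,d)$.

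Next I check that $e_1$ is a cyclic vector for $P$, whatever $b$ and $d$ are. For $k\le n-1$ we have $P^{k-1}e_1=e_k$, and $Pe_{n-1}=Ze_{n-1}+e_n$. Hence $e_1,Pe_1,\dots,P^{n-1}e_1$ is unitriangular with respect to $e_1,\dots,e_n$, so it is a basis and $P$ is cyclic.

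In this cyclic basis $P$ is the companion matrix of $\chi_P$. So I only need the coordinates of $P^ne_1=P(Ze_{n-1}+e_n)$ to take prescribed values. Expanding, $P^ne_1=PZe_{n-1}+b+de_n$, and $PZe_{n-1}$ does not depend on $b$ or $d$. The coefficient of $e_n$ is $d$ plus a constant, and the $e_1,\dots,e_{n-1}$ coordinates are $b$ plus a constant vector. Therefore the map $(b,d)\mapsto P^ne_1$ is an affine bijection onto $K^n$. Converting from the $e$-basis to the unitriangular basis $P^{k-1}e_1$ is an invertible linear change, so the coefficients of $\chi_P$ can likewise be prescribed arbitrarily. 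In particular we can arrange $\chi_P=f$, and then $P$ is similar to $M$.

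The main point to verify carefully is this last step: that the $n$ parameters $(b,d)$ reach every monic degree-$n$ characteristic polynomial. The argument above reduces it to the triangularity of the change of basis. As a cross-check I would also use the bordered determinant formula $\chi_P=(x-d)g(x)-e_{n-1}^{T}\operatorname{adj}(xI-Z)\,b$. Here row $n-1$ of $\operatorname{adj}(xI-Z)$ for a companion matrix has entries of degrees $0,\dots,n-2$, which span all polynomials of degree at most $n-2$. Hence $b$ controls the terms of degree at most $n-2$, and $d$ then fixes the $x^{n-1}$ coefficient.
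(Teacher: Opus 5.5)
Your proof is correct, but it runs in the opposite direction from the paper's. The paper puts $M$ (not $Z$) in companion form $\left[\begin{array}{cc} 0 & \Idm_{n-1}\\ \gamma & d\end{array}\right]$ and conjugates it by the unitriangular matrix $T=\left[\begin{array}{cc} \Idm_{n-1} & 0\\ c & 1\end{array}\right]$. The first $n-1$ rows of $T^{-1}$ are those of the identity, so row $n-1$ of $T^{-1}MT$ is row $n$ of $T$, namely $(c,1)$. Hence the leading $(n-1)\times(n-1)$ block is the companion matrix with arbitrary last row $c$, and one finishes by conjugating with $S\oplus 1$, as you do. This exhibits the similarity explicitly and needs neither a cyclicity check nor any characteristic-polynomial computation. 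You instead fix the corner and vary the border: you show that the matrices $\left[\begin{array}{cc} Z & b\\ e_{n-1}^{T} & d\end{array}\right]$ are all cyclic and that their characteristic polynomials exhaust all monic polynomials of degree $n$. You then appeal to the classification of cyclic matrices by characteristic polynomial. Up to transposition, the paper's $T^{-1}MT$ has exactly your shape: a companion corner with $e_{n-1}$ bordering it. So the two arguments are dual. The paper fixes the similarity class and reaches all companion corners; you fix the corner and reach all cyclic classes. Your bordered-determinant check is a nice bonus. Row $n-1$ of $\operatorname{adj}(x\Idm-Z)$ is exactly $(1,x,\dots,x^{n-2})$, so $\chi_P=(x-d)g(x)-\sum_j b_jx^{j-1}$, which gives explicit values of $b$ and $d$. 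In the main argument, state explicitly that the basis $P^{k-1}e_1$ does not depend on $(b,d)$; your own formulas show this. That independence is what makes the map from $(b,d)$ to the coefficients of $\chi_P$ an affine bijection.
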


\begin{proof}
		Let
	\[
	M = \left[ \begin{array}{cc} 0 & \Idm_{n-1} \\ \gamma &  d \end{array}\right],
	T = \left[ \begin{array}{cc} \Idm_{n-1} & 0 \\ c &  1 \end{array}\right].
	\] 
	where $c = (c_1, c_2, \dots, c_{n-1})$. Then 
	
	\[
	M^T = \left[ \begin{array}{ccc} 0 & \Idm_{n-2} & \ast \\ c_1 &  \widetilde{c} & \ast \\                  \ast & \ast & \ast \end{array}\right].
	\] 
\end{proof}	

\begin{lemma} \label{cyc1b}
	Let $L \in \GL(n-1, K)$ be lower triangular, and let $M  \in \M(n, K)$ be cyclic. Then $M$ is similar to  matrices 
	\[
	P = \left[ \begin{array}{cc} w  & \beta \\ L &  0 \end{array}\right],
	Q = 	\left[ \begin{array}{cc} 0  & b\\ L&  \delta \end{array}\right].
	\]
\end{lemma}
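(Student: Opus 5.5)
The plan is to build a suitable basis directly, handling $P$ first and then deducing the $Q$-form from it by transposing and reversing the basis order. Work with row vectors (linear functionals) and the right action $\varphi \mapsto \varphi M$ on $K^{1\times n}$. Since $M$ is cyclic, so is $M^T$, so there is a row vector $\psi$ such that $\psi, \psi M, \dots, \psi M^{n-1}$ is a basis of $K^{1\times n}$. Let $R$ be the invertible matrix with rows $\varphi_1,\dots,\varphi_n$. Then $RMR^{-1}$ is the matrix whose $i$-th row holds the coordinates of $\varphi_i M$ in this basis. So it suffices to find a basis $\varphi_1,\dots,\varphi_n$ with
\[
\varphi_{i+1} M = \sum_{j\le i} L_{ij}\,\varphi_j \qquad (1\le i\le n-1),
\]
because $L$ is lower triangular. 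Rows $2,\dots,n$ of $RMR^{-1}$ are then exactly $[L \;\; 0]$, and the first row is whatever it is, namely $(w,\beta)$.

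To satisfy these relations without inverting $M$ (so that singular $M$ is also covered), use the ansatz $\varphi_k = \psi M^{n-k} g_k(M)$ for polynomials $g_k$. Start with $g_1 = L_{11}^{-1} x$ and $g_2 = 1$, so that $\varphi_1 = L_{11}^{-1}\psi M^{n-1}$, $\varphi_2 = \psi M^{n-2}$, and $\varphi_2 M = L_{11}\varphi_1$. For $k \ge 3$ the requirement $\varphi_k M = \sum_{j\le k-1} L_{k-1,j}\,\psi M^{n-j} g_j(M)$ has a right-hand side in which every term carries the factor $M^{n-k+1}$, since $n-j \ge n-k+1$. It is therefore met by setting
\[
g_k(x) = \sum_{j\le k-1} L_{k-1,j}\, x^{k-1-j} g_j(x),
\]
with $g_1$ taken as above. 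By induction, $g_k(0) = L_{k-1,k-1} g_{k-1}(0) \ne 0$ for $k\ge 2$, because the diagonal entries of $L$ are nonzero ($L \in \GL(n-1,K)$).

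Hence $\varphi_k$ equals $\psi M^{n-k}$ times a nonzero constant, plus a combination of higher powers $\psi M^{t}$ with $t > n-k$ (for $k=1$ it is simply $L_{11}^{-1}\psi M^{n-1}$). The transition matrix from $\psi,\psi M,\dots,\psi M^{n-1}$ to $\varphi_n,\dots,\varphi_1$ is thus triangular with nonzero diagonal. So the $\varphi_k$ form a basis, and this gives the matrix $P$.

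For $Q$, let $J$ be the reversal (antidiagonal) permutation matrix and put $L' = J L^{T} J$, which is again lower triangular and invertible. Apply the $P$-statement to the cyclic matrix $M^T$ with $L'$ in place of $L$, to obtain $M^T \sim \left[\begin{smallmatrix} w & \beta \\ L' & 0\end{smallmatrix}\right]$. Transposing and conjugating by $J$, and using $M \sim M^T$, gives
\[
M \sim J\left[\begin{array}{cc} w^T & L'^T \\ \beta & 0\end{array}\right]J = \left[\begin{array}{cc} 0 & \beta \\ J L'^T J & Jw^T\end{array}\right] = \left[\begin{array}{cc} 0 & b \\ L & \delta\end{array}\right],
\]
with $b=\beta$ and $\delta = Jw^T$. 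The block positions in this computation should be checked once carefully.

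I expect the main obstacle to be the bookkeeping of the recursion for the $g_k$. The key points are that every term on the right-hand side is divisible by $M^{n-k+1}$, so no inverse of $M$ is needed, and that the nonvanishing constant terms $g_k(0)$ are what give linear independence. Both rest on $L$ being lower triangular with nonzero diagonal.
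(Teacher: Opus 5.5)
The paper states Lemma~\ref{cyc1b} without proof, so there is no argument of the author's to compare with. Judged on its own, your construction is correct once one slip is repaired.

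\textbf{The slip.} Under your ansatz $\varphi_k=\psi M^{n-k}g_k(M)$, the choice $g_1=L_{11}^{-1}x$ gives $\varphi_1=L_{11}^{-1}\psi M^{n}$. You state, and need, $\varphi_1=L_{11}^{-1}\psi M^{n-1}$, both for $\varphi_2M=L_{11}\varphi_1$ and for the basis argument. The wrong $g_1$ also breaks the recursion: the $j=1$ contribution to $\varphi_kM$ becomes $L_{k-1,1}L_{11}^{-1}\psi M^{n}$ instead of $L_{k-1,1}\varphi_1=L_{k-1,1}L_{11}^{-1}\psi M^{n-1}$. The correct choice is $g_1=L_{11}^{-1}$. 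The recursion then itself yields $g_2=L_{11}g_1=1$, so $k=2$ needs no separate treatment, and $g_k(0)=L_{k-1,k-1}g_{k-1}(0)\neq 0$ for all $k\ge 2$.

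\textbf{A bound you should state.} By induction on the recursion, $\deg g_k\le k-2$ for $k\ge 2$. This guarantees that the ``higher powers'' in $\varphi_k$ are $\psi M^t$ with $n-k<t\le n-2$, i.e.\ genuine members of the basis $\psi,\dots,\psi M^{n-1}$. A term with $t\ge n$ would have to be rewritten via the characteristic polynomial, and triangularity of the transition matrix would no longer be evident.

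\textbf{The $Q$ reduction is right.} Let $J_r$ be the $r\times r$ reversal matrix. Then $J_n\left[\begin{smallmatrix} w^T & L'^T\\ \beta & 0\end{smallmatrix}\right]J_n=\left[\begin{smallmatrix} 0 & \beta\\ J_{n-1}L'^TJ_{n-1} & J_{n-1}w^T\end{smallmatrix}\right]$ and $J_{n-1}L'^TJ_{n-1}=L$. The appeal to $M\sim M^T$ is superfluous, since transposing $M^T\sim X$ already gives $M\sim X^T$.

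\textbf{A shorter alternative.} One can avoid constructing a basis. Put $N=\left[\begin{smallmatrix} 0 & 0\\ L & 0\end{smallmatrix}\right]$, so that $P=N+e_1r$ with $r=(w,\beta)$, and $Q=N+c\,e_n^T$ with $c$ the last column of $Q$.
Since $L$ is lower triangular with nonzero diagonal, the vectors $N^ke_1$ ($0\le k\le n-1$) form a basis. Because $P^ke_1\in N^ke_1+\mathrm{span}(e_1,\dots,N^{k-1}e_1)$, $P$ is cyclic.
The matrix determinant lemma gives $\chi_P(x)=x^n-\sum_{k=0}^{n-1}(rN^ke_1)\,x^{n-1-k}$, which depends bijectively on $r$. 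Choosing $r$ with $\chi_P=\chi_M$ yields $P\sim M$, since cyclic matrices with equal characteristic polynomials are similar. $Q$ is handled the same way using the row vectors $e_n^TN^k$.

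That argument relies on the uniqueness of cyclic similarity classes and needs a cyclicity check. Yours exhibits the conjugating basis directly and needs neither, at the cost of the recursion bookkeeping.
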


\begin{lemma} \label{cyc2}
	Let  $\alpha \in K$, and let $D \in \GL(n-1, K)$. 
	Let $\Xi, \Phi \subseteq \M(n, K)$ be cyclic similarity classes of $\M(n, K)$ with $\det \Xi \Phi = \alpha \det D$. Then there exists a row vector $b$ such that 
	\[
	M  = 	\left[ \begin{array}{cc} \alpha  & b \\ 0 &  D \end{array}\right]
	\in \Xi \Phi.
	\]
\end{lemma}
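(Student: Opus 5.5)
Conjugate each class into a normal form where the product has an $n-1$ by $n-1$ lower-right block that can be made equal to $D$. By Lemma \ref{cyc1b}, applied with $L = \Idm_{n-1}$ (lower triangular and invertible), choose representatives
\[
X = \left[ \begin{array}{cc} 0 & b \\ \Idm_{n-1} & \delta \end{array}\right] \in \Xi, \qquad
Y = \left[ \begin{array}{cc} w & \beta \\ L' & 0 \end{array}\right] \in \Phi ,
\]
where $L'$ is still to be chosen. Here $b$ is a row vector, $\delta$ is a column vector, $w$ is a scalar and $\beta$ is a row vector. These shapes are similar to companion-type forms, so the entries $b,\delta$ and $w,\beta$ are determined by the characteristic polynomials once $L$ is fixed. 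The hope is that $X$ and $Y$ multiply to a matrix whose first column is $(\ast,0)^T$, with lower-right block equal to $D$ up to controlled terms.

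The product is
\[
XY = \left[ \begin{array}{cc} 0 & b \\ \Idm_{n-1} & \delta \end{array}\right]\left[ \begin{array}{cc} w & \beta \\ L' & 0 \end{array}\right]
= \left[ \begin{array}{cc} bL' & 0 \\ \ast & \beta_{\rm ext} \end{array}\right],
\]
which is the wrong shape. I would therefore use the form $Q$ for $\Xi$ and $P$ for $\Phi$ in the opposite order. Concretely, I would rewrite $P$ after a permutation similarity as $\left[\begin{array}{cc} 0 & L \\ \beta & w\end{array}\right]$-type matrices and $Q$ as $\left[\begin{array}{cc} \delta & L\\ b & 0\end{array}\right]$. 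I would then multiply so that the $(2,1)$ block of the product is a linear combination whose vanishing is one linear condition in the free lower-triangular parameters. The cleanest version is to take $D$ lower triangular after a preliminary conjugation by a block $1\oplus S$, which preserves the required shape of $M$. By Lemma \ref{cyc1} or rational canonical form this is possible at least up to the block-triangular form we need. I would then choose $L_1, L_2$ lower triangular with $L_1L_2$ equal to the prescribed block.

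The main obstacle is that the free data $b,\delta,w,\beta$ are forced by the classes, so only the lower-triangular $L$'s can be used to kill the off-diagonal column. I would try first to arrange one factor of the form $\left[\begin{array}{cc} \alpha' & \ast \\ 0 & L_1\end{array}\right]$ modulo a column. Equivalently, I would pick $X\in\Xi$ with first column $e_1$-proportional up to a term absorbed by the other factor. I would then use the determinant condition $\det\Xi\Phi=\alpha\det D$ to see that the top-left entry is automatically $\alpha$ once the lower block is $D$ and the first column below it vanishes. If the lower-triangularity of $D$ cannot be arranged directly, I would fall back on Lemma \ref{cyc1}: choose $D$'s conjugate cyclic-plus-correction and induct on $n$.
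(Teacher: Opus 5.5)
Your proposal has a genuine gap. It never exhibits representatives of $\Xi$ and $\Phi$ whose product has the shape $\left[\begin{array}{cc}\ast & \ast\\ 0 & D\end{array}\right]$, and the concrete route you sketch cannot work in general:

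\begin{itemize}
\item Conjugating by $1\oplus S$ only replaces $D$ by a similar matrix. So you cannot make $D$ lower triangular unless $D$ is triangularizable over $K$, which fails, e.g., when the characteristic polynomial of $D$ has an irreducible factor of degree at least $2$.
\item A product $L_1L_2$ of two lower triangular matrices is again lower triangular, so that factorization cannot reach a general $D$ either.
\item The other suggestions are not turned into arguments. The $(2,1)$ block has $n-1$ entries, not ``one linear condition'', and the induction via Lemma \ref{cyc1} is unspecified.
\item Your first product $XY$ is not even defined blockwise: the column partition $(n-1,1)$ of $X$ does not match the row partition $(1,n-1)$ of $Y$.
\end{itemize}

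Only your final determinant observation is correct and usable.

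The missing idea is to split $D$ into one lower and one upper triangular factor, and to use Lemma \ref{cyc1b} once directly and once transposed.
\begin{itemize}
\item After conjugating by $1\oplus S$, assume $D=LU$ with $L$ lower and $U$ upper triangular. A suitable conjugate of any invertible matrix has such a factorization, e.g.\ by Sourour's theorem.
\item Lemma \ref{cyc1b} gives $P=\left[\begin{array}{cc} w & \beta\\ L & 0\end{array}\right]\in\Xi$.
\item Apply the $Q$-form of Lemma \ref{cyc1b} with the lower triangular matrix obtained by transposing $U$, then transpose the result; this is legitimate because a matrix is similar to its transpose. This gives $Q=\left[\begin{array}{cc} 0 & U\\ \gamma & v\end{array}\right]\in\Phi$, whose first column vanishes except in its last entry.
\end{itemize}
The last $n-1$ rows of $P$ are $[L\;\;0]$ and the first $n-1$ rows of $Q$ are $[0\;\;U]$, so
\[
PQ=\left[\begin{array}{cc} \beta\gamma & wU+\beta v\\ 0 & LU\end{array}\right].
\]
The determinant condition then forces $\beta\gamma=\alpha$. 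This is exactly the paper's proof.
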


\begin{proof}
	We may assume that $D = LU$, where $L$ is lower and $U$ is upper triangular. By lemma \ref{cyc1b}, there exist matrices
	\[
	P = 	\left[ \begin{array}{cc} w  & \beta \\ L &  0 \end{array}\right]
	\in \Xi,
	Q = 	\left[ \begin{array}{cc} 0  & U \\ \gamma &  v \end{array}\right]
	\in \Phi.
	\]
	Then 
	\[
	PQ = 	\left[ \begin{array}{cc} \beta \gamma & wU \\ 0 &  LU \end{array}\right].
	\] 
\end{proof}	

We prove theorem \ref{BN}:
Let $D \oplus \delta I_1 \in \Psi$.
Let $M \in \GL(n, K)$ be nonscalar. It follows easily from lemma \ref{cyc1}
that $M$ is similar to a matrix
\[
P = 	\left[ \begin{array}{cc} S  & q \\ p &  \rho \end{array}\right].
\]
where $S = 0_1 \oplus Q$ for some matrix $Q \in \GL(n-2, K)$. By lemma \ref{cyc2}, we may assume that $S = N D$, where $N = \Jordan_{n-1}'$. Now
$(pD^{-1})_{n-1} \ne 0$: Otherwise, $P (D^{-1} \oplus I_1)$ would be singular.
By lemma \ref{cyc1b}, $\Omega$ contains a matrix
\[
C = 	\left[ \begin{array}{cc} N  & b \\ pD^{-1} &  \gamma \end{array}\right].
\]
Let 	
\[
C^{-1} P = 	\left[ \begin{array}{cc} X  & v \\ u &  \delta \end{array}\right].
\]
Then $NX +bu = S = ND$ and $pD^{-1}X + \gamma u = p$. Hence $u = 0$ as $b_1 \ne 0$. Then $X = D$ so that $C^{-1} P \in \Psi$.


\section{Matrices with minimal rank one} 
\mbox{}

\begin{definition}
	{\rm	Let $\mr(M) = \min \{\rank(M-\lambda \Idm_n); \lambda \in \overline{K}\}$, 
		where $\overline{K}$ is an algebraic closure of $K$, denote the 
		the {\em minimal  rank} $\mr(M)$ of a matrix $M \in \M(n,K)$. }
\end{definition}

Clearly, $\mr(M)$ is the number of trivial invariant factors of $M$.

\begin{lemma} \label{simple-lemma}
Let $n \ge 3$ if $|K| = 2$.
Let $\Omega$ and $\Psi$ be cyclic conjugacy classes of $\GL(n,K)$. 	Let $M \in \GL(n,K)$ 
with $\mr(M) = 1$ and $\det M = \det \Omega \Psi$. Then $M \in \Omega \Psi$ if and only if one of the following holds
	\begin{enumerate}
		\item $M$ is nonprimary; 
		\item  $\Omega$ or $\Psi$ is not irreducible.
	\end{enumerate}
\end{lemma}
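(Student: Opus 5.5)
Throughout, write $M=\lambda\Idm_n+R$ with $\lambda\in K$ and $\rank R=1$. Such a $\lambda$ lies in $K$: otherwise $M-\lambda^\sigma\Idm_n$ would also have rank one for a Galois conjugate $\lambda^\sigma\ne\lambda$, and then $n\le 2$. Either $R$ is not nilpotent, and then $M$ has the two distinct eigenvalues $\lambda$ and $\lambda+\operatorname{tr}R$, so $M$ is nonprimary. Or $R^2=0$, and then $M=\lambda T$ with $T$ a transvection, which is primary. So the lemma says: for $M$ nonprimary nothing beyond the determinant condition is needed, while $\lambda T\in\Omega\Psi$ holds exactly when $\Omega$ or $\Psi$ is reducible. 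I would prove the two directions separately.

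\emph{Sufficiency.} If $M$ is nonprimary, this is Theorem~\ref{BN-2}. So let $M=\lambda T$, and suppose $\Omega$ is reducible (the case of $\Psi$ follows by passing to inverses, using $(\Omega\Psi)^{-1}=\Psi^{-1}\Omega^{-1}$). Write $\mu(\Omega)=g h$ with $\deg g=m$, $\deg h=n-m$, $1\le m\le n-1$. I want an explicit factorisation $\lambda T=PQ$ with $P\in\Omega$ and $Q\in\Psi$.

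I would first try to take $P$ in block triangular form with diagonal blocks the companion matrices of $g$ and $h$, and $Q$ from Lemma~\ref{cyc1b} in the shape $\left[\begin{smallmatrix}0&b\\ L&\delta\end{smallmatrix}\right]$. The free parameters are the off-diagonal block of $P$ (which keeps $P$ in $\Omega$ as long as $P$ stays cyclic) and $b,\delta$ in $Q$. The aim is that $PQ-\lambda\Idm_n$ be a square-zero rank-one matrix. In the style of the proof of Lemma~\ref{cyc2}, one arranges that $PQ$ is block triangular with scalar diagonal $\lambda$ and a single nonzero off-diagonal entry. The reducibility of $\Omega$ is exactly what supplies the block needed to absorb the one nontrivial entry of the transvection. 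The hypothesis $n\ge3$ for $|K|=2$ presumably enters here, to make the required choice of entries possible.

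\emph{Necessity.} Suppose $\Omega$ and $\Psi$ are both irreducible and $AB=\lambda T$ with $A\in\Omega$, $B\in\Psi$. Write $T=\Idm_n+u\varphi$ with $\varphi(u)=0$. Put $C=\lambda A^{-1}$, which is again irreducible cyclic, and $v=A^{-1}u$. Then
\[
B=\lambda A^{-1}T=C+\lambda v\varphi ,
\]
so $B$ and $C$ agree on the hyperplane $\ker\varphi$. By the matrix determinant lemma,
\[
\chi_B(x)=\chi_C(x)\bigl(1-\lambda\,\varphi(x\Idm_n-C)^{-1}v\bigr).
\]
Hence the rational function $\varphi(x\Idm_n-C)^{-1}v$ has denominator dividing the irreducible polynomial $\chi_C$ and numerator $\chi_C-\chi_B$, of degree $\le n-1$. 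The extra relation $\varphi(u)=\varphi(Av)=\varphi(\lambda C^{-1}v)=0$ must then be turned into a contradiction. My plan is to expand $C^{-1}$ as a polynomial in $C$ and read off $\varphi(C^{-1}v)$ as a specific coefficient in the Laurent expansion of $\varphi(x\Idm_n-C)^{-1}v$ at $x=0$. Combined with $\det B=\det C$ (which holds since $\det T=1$), this should force $\chi_C-\chi_B$ to vanish at $0$ to too high an order. That in turn should produce either $\chi_B=\chi_C$ with $v=0$ or $\varphi=0$ (absurd, as $T\ne\Idm_n$), or a nontrivial common factor violating irreducibility.

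This polynomial bookkeeping is the step I expect to be the main obstacle. As a fallback I would use an invariant-subspace argument. The subspace $\bigcap_i\ker(\varphi C^{i})$ is $C$-invariant. By the irreducibility of $C$ it is $0$, so $\varphi,\varphi C,\dots,\varphi C^{n-1}$ form a basis of the dual space. One then computes $B$ in the corresponding cyclic basis and seeks a $B$-invariant proper subspace, contradicting the irreducibility of $\Psi$. This is the generalisation of the known fact, quoted in the introduction, that $\Omega\Omega^{-1}$ contains no transvection for irreducible $\Omega$.
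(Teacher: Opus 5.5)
\paragraph{The necessity half cannot be completed.} In the generality you attempt, the claim is false. The paper's companion identity
\[
\left[ \begin{array}{cc} 0 & \Idm_{n-1} \\ \gamma & d \end{array}\right]
\left[ \begin{array}{cc} a & \beta \\ \Idm_{n-1} & 0 \end{array}\right]
=
\left[ \begin{array}{cc} \Idm_{n-1} & 0 \\ d+\gamma a & \gamma\beta \end{array}\right]
\]
settles this. After scaling to $\lambda=1$, so that $\gamma\beta=1$, it shows that $\Omega\Psi$ contains a transvection as soon as $d+\gamma a\neq 0$, i.e.\ as soon as $\Psi\neq\Omega^{-1}$, whether or not $\Omega,\Psi$ are irreducible. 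Concretely, over $\GF(2)$,
\[
\left[ \begin{array}{ccc} 0&1&0\\ 0&0&1\\ 1&1&0 \end{array}\right]
\left[ \begin{array}{ccc} 0&1&1\\ 1&0&0\\ 0&1&0 \end{array}\right]
=
\left[ \begin{array}{ccc} 1&0&0\\ 0&1&0\\ 1&1&1 \end{array}\right].
\]
Both factors have the irreducible characteristic polynomial $x^3+x+1$, and the product is a transvection. So for $\Omega=\Psi$ equal to this class of $\GL(3,2)$, a transvection lies in $\Omega\Psi$ although neither (1) nor (2) holds. This example satisfies every constraint you use: $\varphi(C^{-1}v)=0$, $\det B=\det C$, and $B,C$ irreducible. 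Hence neither the Laurent-coefficient bookkeeping nor the search for a $B$-invariant subspace can produce a contradiction.

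What is true is necessity under the extra hypothesis $\Psi=\lambda\Omega^{-1}$, i.e.\ the fact that $\Omega\Omega^{-1}$ contains no transvection for irreducible $\Omega$. This is all the paper's argument actually proves, since its step ``we may assume $\Psi=\Omega^{-1}$'' is a legitimate reduction only for the ``if'' direction. With that hypothesis your own setup finishes at once. You get $\chi_B=\chi_C$, so your determinant formula forces $\varphi(x\Idm_n-C)^{-1}v=0$, hence $\varphi C^iv=0$ for all $i$. Since $v\neq 0$ and $C$ is irreducible, the vectors $C^iv$ span $K^n$, so $\varphi=0$, a contradiction. The paper instead writes $B=X^{-1}CX$ and applies Uhlig's rank-one lemma to $CX-XC=X(B-C)$.

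\paragraph{The sufficiency half is only a plan.} It misses the organising dichotomy: reducibility is not what ``absorbs'' the transvection in general. The companion identity above already handles every $\Psi\neq\lambda\Omega^{-1}$. Reducibility is needed only for $\Psi=\lambda\Omega^{-1}$, and there the paper gives explicit factorisations:
\begin{itemize}
\item when $Q\oplus R\in\Omega$ is nonprimary, it uses $\left[\begin{smallmatrix}\Idm_m&N\\0&\Idm_{n-m}\end{smallmatrix}\right]=(Q\oplus R)\left[\begin{smallmatrix}Q^{-1}&Q^{-1}N\\0&R^{-1}\end{smallmatrix}\right]$ with $N$ of rank one;
\item when $\Omega$ is primary, it uses a hypercompanion matrix $W\in\Omega$ with $WT$ similar to $W$.
\end{itemize}
Your sketch never produces a factorisation in either case.

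\paragraph{The case $n=2$.} Your opening reduction silently needs $n\ge 3$. For $n=2$, which the statement allows when $|K|\ge 3$, an irreducible $M$ has $\mr(M)=1$, and then even the ``if'' direction fails. Over $\mathbb{R}$, take $R=\left[\begin{smallmatrix}0&-1\\1&0\end{smallmatrix}\right]$, let $\Omega$ be the class of $\left[\begin{smallmatrix}1&1\\0&1\end{smallmatrix}\right]$ and $\Psi$ the class of $R$. Then $R\notin\Omega\Psi$, because $\operatorname{tr}((\Idm_2+N)R)=\operatorname{tr}(NR)\neq 0$ for every nonzero $N$ with $N^2=0$.
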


\begin{proof}
1: 
	\[
	\left[ \begin{array}{cc} \Idm_{n-1}  & 0 \\ d + \gamma a &   \gamma \beta \end{array}
	\right] 
	= 
	\left[ \begin{array}{cc} 0  & \Idm_{n-1} \\ \gamma &  d \end{array}
	\right]
	\left[ \begin{array}{cc} a  &  \beta \\ \Idm_{n-1} &  0 \end{array} \right].
	\]
2: So let $P$ be primary. We may assume that $P$ is a transvection. As just seen, $P \in \Omega \Psi$ or $\Idm_n \in \Omega \Psi$. So we may assume that $\Psi = \Omega^{-1}$.  First let $\Omega$ be $(m,n-m)$-cyclic. Then $P$ is similar to a matrix
	\[
	\left[ \begin{array}{cc} \Idm_m  & N \\ 0 &   \Idm_{n-m} \end{array}
	\right] = 
	\left[ \begin{array}{cc} Q  & 0 \\ 0 &  R \end{array}
	\right]
	\left[ \begin{array}{cc} Q^{-1}  & Q^{-1}N \\ 0 &  R^{-1} \end{array}
	\right]
	\in \Omega \Psi,
	\]
where $Q \oplus R \in \Omega$.
	
So let $\Omega$ be primary. Let $W = \Jordan_k(p) \in \Omega$ be the upper Jacobson (hypercompanion) matrix with minimal polynomial $p^k$, where $p \in K[x]$ is irreducible. Let $N = \Jordan_n(0):= \Jordan_n(x)$, and let $T$ be the transvection  $\Idm_n + N^{n-1}$. Then $WT$ is similar to $W$. 
	
Finally, let $\Omega$ be irreducible. Suppose that  $WP \in \Omega$ for some $W \in \Omega$. So let $WP =X^{-1}WX$. Then $XWP = WX$ and $XW -WX = WXP^{-1} - WX$ has rank one. By Uhlig's rank one lemma \cite[Lemma 2]{Uhlig-1979}, $XW - WX = 0$, a contradiction.
\end{proof}

We give a short proof of Uhlig's rank one lemma

\begin{lemma} \label{Uhlig-rank-one-lemma}	Let $P \in \GL(n,K)$ be irreducible. Let $H = XP -PX$ for some $X \in \GL(n, K)$. If $\rank H \le 1$, then $H = 0$.
\end{lemma}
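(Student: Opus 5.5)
The plan is a trace argument, using that an irreducible matrix has no proper nonzero invariant subspaces. The invertibility of $X$ will not be needed.

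First I would record that $H$ is orthogonal, with respect to the trace form, to every polynomial in $P$. For every $k \ge 0$, cyclicity of the trace gives
\[
\operatorname{tr}(H P^k) = \operatorname{tr}(X P^{k+1}) - \operatorname{tr}(P X P^k) = \operatorname{tr}(X P^{k+1}) - \operatorname{tr}(X P^{k+1}) = 0 .
\]
By linearity, $\operatorname{tr}(H f(P)) = 0$ for all $f \in K[x]$.

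Next, suppose $\rank H = 1$. Then I would write $H = u v^{t}$ with a nonzero column vector $u \in K^n$ and a nonzero row vector $v^{t}$. For any $f \in K[x]$,
\[
\operatorname{tr}(H f(P)) = \operatorname{tr}(u v^{t} f(P)) = v^{t} f(P) u ,
\]
so $v^{t} f(P) u = 0$ for all polynomials $f$.

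The key step is to use irreducibility of $P$. The span $U = \{ f(P) u : f \in K[x] \}$ is a nonzero $P$-invariant subspace of $K^n$. Since $P$ is irreducible, i.e.\ its characteristic polynomial is irreducible over $K$, it has no proper nonzero invariant subspaces. Hence $U = K^n$. (Equivalently, $K[P]$ is a field and $K^n$ is a one-dimensional $K[P]$-vector space, so any nonzero $u$ generates it.) Therefore $v^{t}$ vanishes on all of $K^n$, so $v = 0$, contradicting $\rank H = 1$. Thus $\rank H \le 1$ forces $H = 0$.

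The only step with real content is the identification of irreducibility with the absence of invariant subspaces, so that $u$ is a cyclic vector. I would justify it briefly: an invariant subspace $U$ of dimension $d$ yields a factor of degree $d$ of the characteristic polynomial, namely the characteristic polynomial of $P$ restricted to $U$. Irreducibility then forces $d \in \{0, n\}$, and $d \neq 0$ because $u \in U$ is nonzero. Everything else is a direct computation.
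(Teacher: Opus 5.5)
Your proof is correct and is essentially the paper's argument. The paper also starts from $\operatorname{tr}(HP^j)=0$, phrased as ``$HP^j$ is rank one and traceless, hence nilpotent, so $HP^jH=0$''; for $H=uv^{t}$ this is exactly your identity $v^{t}P^ju=0$. It then uses irreducibility in the same way, working with the row-vector orbit $uH, uHP,\dots$ inside the left kernel of $H$ where you use the column orbit of $u$ and the functional $v^{t}$.
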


\begin{proof}
The matrices $HP^j$ are nilpotent of rank at most one as they have zero trace.
Hence $HP^jH = 0$. Now if $u \in K^n$, then $\langle uH, uHP, \dots, uHP^{n-1}\rangle \le \ker H$. Hence $uH = 0$.
\end{proof}


\section{Proof of theorem \ref{main-1}}

Let $\Omega = \Omega_1 \oplus \Omega_2, \Psi = \Psi_1 \oplus \Psi_2$, where $\Omega_1, \Psi_1 \subseteq \GL(m, K)$. By thorem \ref{BN}, we may asssume that $m \ne 1, n-1$. And by theorem \ref{BN-2} and lemma \ref{simple-lemma}, we only have to show that $\Omega \Psi$ contains all noncyclic, primary matrices with minimal rank at least 2.
 For the proof we use a block LU-decomposition. 
 
 \begin{definition}
 	Let 
 	\[
 	M = 	\left[ \begin{array}{lr} A  & B\\ C &  D  \end{array}	\right],
 	\]
 	where $A \in \M(m, K)$. We say that the matrix $A$ is the principle corner $\PC_m(M)$
 	of $M$. If $A$ is nonsingular, then  the matrix $\SC_m(M) :=  D-CA^{-1}B$ is called the Schur complement of $A$ in $M$.
 \end{definition}	
 
 We remark some useful properties of Schur complements.
 
 \begin{lemma} \label{Schur-0}
 	Let 
 	\[
 	M = 	\left[ \begin{array}{lr} A  & B\\ C &  D  \end{array}
 	\right], 
 	X = 	\left[ \begin{array}{lr} A^{-1}B  & \Idm_m\\ \Idm_{n-m} &  0  \end{array}
 	\right],
 	Q = 	\left[ \begin{array}{lr} Y  & 0\\ 0 &  Z  \end{array}
 	\right], 
 	\]
 	where $M \in \GL(n, K)$, $A, Y \in \GL(m, K)$,  and $Z \in \GL(n-m, K)$.
 	\begin{enumerate}
 		\item $\PC_{n-m}(M^X) = \SC_m(M)$ and $\SC_{n-m}(M^X) = A$.
 		\item $\PC_m(M^Q) = \PC_m(M)^Y, \SC_m(M^Q) = \SC_m(M)^Z$.
 \end{enumerate}
 \end{lemma}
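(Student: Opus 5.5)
The plan is a direct block computation, with $M^X$ read as $X^{-1}MX$. This is the only reading under which the block sizes of $X$ (column blocks of sizes $n-m,\,m$) match the row blocks of $M$ (sizes $m,\,n-m$). Write $E=A^{-1}B$, so $X=\left[\begin{smallmatrix} E & \Idm_m\\ \Idm_{n-m} & 0\end{smallmatrix}\right]$. First I will record the inverse
\[
X^{-1}=\left[\begin{array}{cc} 0 & \Idm_{n-m}\\ \Idm_m & -E\end{array}\right],
\]
which one checks by multiplying out. Then I form $MX$ and $X^{-1}(MX)$ blockwise, using $AE=B$ to simplify every occurrence of $AA^{-1}B$.

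For part (1) the steps are as follows.
\begin{enumerate}
\item Compute the $(1,1)$ block of $M^X$, which has size $(n-m)\times(n-m)$. By the shape of $X^{-1}$, it is the second block row of $MX$ restricted to its first block column.
\item Compare this block with $\SC_m(M)=D-CA^{-1}B$. Only the block of $X$ carrying $\pm A^{-1}B$ contributes the correction term $CA^{-1}B$. So the sign of that block must be tracked carefully against the sign convention in the definition of the Schur complement.
\item Compute the remaining three blocks of $M^X$. Call them $C'$, $B'$ and $D'$, where $D'$ is the $m\times m$ block in position $(2,2)$.
\item Verify $\SC_{n-m}(M^X)=D'-C'(\SC_m(M))^{-1}B'=A$. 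Nonsingularity of $\SC_m(M)$ follows from $\det M=\det A\,\det\SC_m(M)\neq 0$. The intended cancellation comes from the identity
\[
M^{-1}=\left[\begin{array}{cc} \ast & \ast\\ \ast & \SC_m(M)^{-1}\end{array}\right].
\]
A cleaner route is to note that a block permutation combined with a block-unitriangular change of basis preserves the pair (corner, Schur complement) up to swapping the two. So both claims reduce to reading off the block $LDU$ factorization
\[
M=\left[\begin{array}{cc} \Idm & 0\\ CA^{-1} & \Idm\end{array}\right]\left[\begin{array}{cc} A & 0\\ 0 & \SC_m(M)\end{array}\right]\left[\begin{array}{cc} \Idm & A^{-1}B\\ 0 & \Idm\end{array}\right],
\]
and then conjugating by $X$.
\end{enumerate}

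Part (2) is routine. With $Q=Y\oplus Z$,
\[
M^Q=\left[\begin{array}{cc} A^Y & Y^{-1}BZ\\ Z^{-1}CY & D^Z\end{array}\right].
\]
Its principal corner is $A^Y$, and its Schur complement is
\[
Z^{-1}DZ-Z^{-1}CY\,Y^{-1}A^{-1}Y\,Y^{-1}BZ=Z^{-1}(D-CA^{-1}B)Z=\SC_m(M)^Z.
\]

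The main obstacle is step (2) of part (1): matching the correction term in the $(1,1)$ block with the sign in $\SC_m(M)=D-CA^{-1}B$. I will carry out that block multiplication explicitly first. If the term comes out as $+CA^{-1}B$, the identity as displayed holds only where $-1=1$, i.e.\ in characteristic $2$. In other characteristics I will check whether the surrounding argument only uses the conclusion up to this sign. For this I will redo the $LDU$ computation above with the $(1,1)$ block of $X$ taken exactly as printed, to determine what $M^X$ actually is.
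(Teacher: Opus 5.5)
\paragraph{Verdict.} Your part (2) is correct, and so are your reading $M^X=X^{-1}MX$ and your formula for $X^{-1}$. The paper states this lemma without proof, so there is no argument to compare with. The gap is in part (1): you stop exactly at the decisive multiplication and replace it with a hedge.

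\paragraph{Part (1) as printed is false outside characteristic $2$.} Carrying out the multiplication, with $E=A^{-1}B$ and $AE=B$, gives
\[
MX=\left[\begin{array}{cc} 2B & A\\ D+CE & C\end{array}\right],\qquad
M^X=\left[\begin{array}{cc} D+CE & C\\ 2B-E(D+CE) & A-EC\end{array}\right].
\]
So $\PC_{n-m}(M^X)=D+CA^{-1}B$. When this block is invertible, $\SC_{n-m}(M^X)=A-2B(D+CA^{-1}B)^{-1}C$. Hence both claims of (1) fail as printed unless $K$ has characteristic $2$. Take $n=2$, $m=1$, $M=\left[\begin{smallmatrix}1&1\\1&0\end{smallmatrix}\right]$. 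Then $M^X=M$, so $\PC_1(M^X)=1\neq-1=\SC_1(M)$ and $\SC_1(M^X)=-1\neq 1=A$.

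Your fallback of using the conclusion ``up to this sign'' does not rescue it either. Over $\GF(3)$ with $M=\left[\begin{smallmatrix}1&1\\1&2\end{smallmatrix}\right]$, the corner $D+CA^{-1}B$ is $0$, while $\SC_1(M)=1$.

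\paragraph{The repair is to change $X$.} Your $LDU$ idea shows exactly how. Put $U=\left[\begin{smallmatrix}\Idm_m&A^{-1}B\\0&\Idm_{n-m}\end{smallmatrix}\right]$, $\Pi=\left[\begin{smallmatrix}0&\Idm_m\\ \Idm_{n-m}&0\end{smallmatrix}\right]$ and $S=\SC_m(M)$. The printed $X$ is $U\Pi$. The factorization gives $MU^{-1}=\left[\begin{smallmatrix}A&0\\C&S\end{smallmatrix}\right]$, so the right conjugator is $U^{-1}\Pi=\left[\begin{smallmatrix}-A^{-1}B&\Idm_m\\ \Idm_{n-m}&0\end{smallmatrix}\right]$. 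For it,
\[
M^{U^{-1}\Pi}=\Pi^{-1}\,U\left[\begin{array}{cc} A&0\\ C&S\end{array}\right]\Pi=\left[\begin{array}{cc} S & C\\ A^{-1}BS & A+A^{-1}BC\end{array}\right].
\]
Its corner is $S$, and its Schur complement is $A+A^{-1}BC-A^{-1}BSS^{-1}C=A$.

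This also shows that your general claim is false as stated. A block permutation combined with an arbitrary block-unitriangular change of basis does not swap corner and complement. It works only for this specific factor, and the sign is exactly the point.

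For the paper, the lemma is only needed as an existence statement: $M$ is similar to a matrix whose corner is $\SC_m(M)$ and whose Schur complement is $A$. So the corrected $X$ is what you should prove. With the printed $X$, part (1) holds only in characteristic $2$.
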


\begin{lemma} \label{Schur-1}
Let 
	\[
	M = 	\left[ \begin{array}{lr} A  & B\\ C &  D  \end{array}	\right]
	\]
	with a non singular principal corner $A= \PC_m(M)$. Let $S = \SC_m(M)$ be the Schur complement of $A$ in $M$. If $A = A_1 A_2$ and $S = S_1 S_2$, then $M$ has a decomposition
	\[
	M = 	\left[ \begin{array}{lr} A  & B\\ C &  D  \end{array}	\right]
	= \left[ \begin{array}{cc} A_1  & 0\\ CA_2^{-1} &  S_1  \end{array}	\right]
	\left[ \begin{array}{cc} A_2  & A_1^{-1}B\\ 0 &  S_2  \end{array}	\right].
	\]
\end{lemma}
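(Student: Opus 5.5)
The plan is to verify the identity by multiplying out the right-hand side blockwise and comparing each block with $M$. The only facts needed are $A = A_1A_2$, $S = S_1S_2$, and the definition $S = \SC_m(M) = D - CA^{-1}B$. Invertibility of $A$ forces $A_1$ and $A_2$ to be invertible, since $\det A_1 \det A_2 = \det A \ne 0$. Hence $A_1^{-1}$ and $A_2^{-1}$ make sense.

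The four blocks of the product
\[
\left[ \begin{array}{cc} A_1 & 0\\ CA_2^{-1} & S_1 \end{array}\right]
\left[ \begin{array}{cc} A_2 & A_1^{-1}B\\ 0 & S_2 \end{array}\right]
\]
are computed as follows.
\begin{enumerate}
\item The $(1,1)$ block is $A_1A_2 = A$.
\item The $(1,2)$ block is $A_1A_1^{-1}B = B$.
\item The $(2,1)$ block is $CA_2^{-1}A_2 = C$.
\item The $(2,2)$ block is $CA_2^{-1}A_1^{-1}B + S_1S_2$.
\end{enumerate}

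For the last block, note that $A_2^{-1}A_1^{-1} = (A_1A_2)^{-1} = A^{-1}$. Hence the $(2,2)$ block equals $CA^{-1}B + S = CA^{-1}B + D - CA^{-1}B = D$, so the product is $M$.

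There is no real obstacle here. The only point needing care is the order of the factors in the inverse, $(A_1A_2)^{-1} = A_2^{-1}A_1^{-1}$. This is precisely why the lower-left entry is $CA_2^{-1}$ and the upper-right entry is $A_1^{-1}B$, and not the other way round.

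Conceptually, this is the block LU factorization
\[
M = \left[ \begin{array}{cc} \Idm_m & 0\\ CA^{-1} & \Idm_{n-m} \end{array}\right]
\left[ \begin{array}{cc} A & B\\ 0 & S \end{array}\right].
\]
The lemma redistributes $A = A_1A_2$ and $S = S_1S_2$ between the two triangular factors. This is how the lemma will later be used: a matrix with prescribed corner $A$ and Schur complement $S$ is written as a product of a block lower and a block upper triangular matrix with chosen diagonal blocks.
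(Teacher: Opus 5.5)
Your proposal is correct and takes the same approach as the paper, which states this lemma without proof because it is the direct block multiplication you carry out, using $A_2^{-1}A_1^{-1}=A^{-1}$ and $S=D-CA^{-1}B$. You also note that $A_1$ and $A_2$ are invertible because $\det A_1\det A_2=\det A\neq 0$, a small point the statement takes for granted.
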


We say that a matrix $P \in \GL(n, K)$ is good\footnote{Halmos \cite{Halmos-1991} probably would prefer to call the pair $(\Omega, \Psi)$ good} (for our proof) if $P$ is similar to a matrix $Q$ with $\PC_m(Q) \in \Omega_1 \Psi_1$ and $\SC_m(Q) \in \Omega_2 \Psi_2$. 
As just shown, $\Omega \Psi$ contains all good matrices.

So if we can achieve that $\PC_m(Q)$ and  $\SC_m(Q)$ are nonprimary and $\PC_m(Q)$ has prescribed determinant, then $Q$ is good by theorem \ref{BN-2}. We show that this is possible if $K$ or $m, n-m$, and the minimal rank of $M$ are not too small.

It is easy to see that $M \in \M(n, K)$ is similar to a 
matrix $P$ such that $\PC_m(P)$ can be any prescribed matrix if $m \le \mr(M), \frac{n}{2}$.

This follows immediately from the following result of Sourour.
A subspace $T$ of a vector space $V$ is called $\varphi$-antiinvariant if  $\varphi$ is a linear transformtion  of $V$ and $T \cap T \varphi = 0$.

\begin{lemma} \label{Sourour}
Let $V$ be a vector space with finite dimension $\dim V$. Let $\varphi \in \GL(V)$. Then 
$V$ has a $\varphi$-antiinvariant subspace of dimension $m$ if and only if $m \le \mr(\varphi), \frac{\dim V}{2}$.
\end{lemma}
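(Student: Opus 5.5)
The plan is to prove the two directions separately. Necessity is a short dimension count. Sufficiency is a construction built from a rational canonical decomposition of $V$. In both directions one may pass from a subspace to a subspace of itself, because any subspace of a $\varphi$-antiinvariant subspace is again $\varphi$-antiinvariant. So for sufficiency it is enough to produce one antiinvariant subspace of dimension exactly $\min(\mr(\varphi), \lfloor \dim V/2\rfloor)$.

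\emph{Necessity.} Let $T$ be $\varphi$-antiinvariant with $\dim T = m$. Since $\varphi$ is invertible, $\dim T\varphi = m$, and $T \cap T\varphi = 0$ gives $2m \le \dim V$. For the bound $m \le \mr(\varphi)$, extend scalars to $\overline{K}$. The subspace $T \otimes \overline{K}$ is still antiinvariant, since $T\cap T\varphi=0$ is a rank condition and is preserved under field extension. Fix $\lambda \in \overline{K}$ and suppose $t(\varphi - \lambda) \in T$ for some $t \in T$. Then $t\varphi = t(\varphi-\lambda) + \lambda t$ lies in $T \cap T\varphi = 0$, so $t = 0$. Hence $\varphi - \lambda$ is injective on $T$, and its image $T(\varphi-\lambda)$ has dimension $m$ and lies inside $\operatorname{im}(\varphi-\lambda)$. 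Therefore $m \le \rank(\varphi - \lambda)$ for every $\lambda$, that is, $m \le \mr(\varphi)$.

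\emph{Sufficiency.} First identify $\mr(\varphi)$ in terms of invariant factors. Write $V = V_1 \oplus \dots \oplus V_s$ as $\varphi$-cyclic subspaces according to the $s$ nontrivial invariant factors, with $\dim V_i = d_i$ and cyclic vectors $v_i$. The geometric multiplicity of $\lambda$ is the number of invariant factors vanishing at $\lambda$, so it is at most $s$. It equals $s$ for any root $\lambda$ of the smallest invariant factor, because that factor divides all the others. Hence $\mr(\varphi) = n - s$ with $n = \dim V$, and the target dimension is $\min(n-s, \lfloor n/2 \rfloor)$.

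Inside a single cyclic block $V_i$ with basis $v_i, v_i\varphi, \dots, v_i\varphi^{d_i-1}$, take $T_i = \langle v_i\varphi^{2k} : 2k+1 \le d_i - 1\rangle$. Both $T_i$ and $T_i\varphi$ are spanned by disjoint subsets of this basis, so $T_i$ is antiinvariant of dimension $\lfloor d_i/2 \rfloor$. The direct sum $\bigoplus T_i$ is then antiinvariant of dimension $(n-r)/2$, where $r$ is the number of odd $d_i$.

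The deficit comes only from odd-length blocks, and the step I expect to be the main obstacle is recovering it. I would glue two odd blocks $V_i, V_j$ into one chain of length $d_i + d_j$. Replace the last basis vector $v_i\varphi^{d_i-1}$ of the $i$-th chain by a mixed vector such as $v_i\varphi^{d_i-1} + v_j$. The idea is that the "overflow" image $v_i\varphi^{d_i}$, which is expressed through the $V_i$ basis via the invariant factor, becomes harmless because of its $V_j$-component. This should gain one dimension per pair of odd blocks, giving about $(n - r)/2 + \lfloor r/2\rfloor \approx \lfloor n/2\rfloor$.

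The exact choice of the mixed vector needs care: its image must avoid the $V_j$-vectors already used in $T$, so the parity of the indices taken in $V_j$ may have to be shifted. I would first verify the gluing on two blocks of size $1$ and $3$. The general case reduces to checking that a small "seam" Gram-type determinant is nonzero.

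Finally, I would check that the glued construction reaches $\min(n-s, \lfloor n/2\rfloor)$ in all cases. If $s$ is large, many blocks have $d_i = 1$; these contribute nothing, and the count $n - s$ is exactly the bound to be met. If all the $d_i$ are odd and $\ge 3$, gluing pairs of blocks reaches $\lfloor n/2 \rfloor$.
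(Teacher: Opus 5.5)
Your necessity argument is correct, and so is the identification $\mr(\varphi)=n-s$. The gap is in sufficiency, and it is more than the unverified gluing step.

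\textbf{The seam is not routine.} With the unshifted parity, your construction already fails for two blocks with minimal polynomial $x^3-c$. You get $T=\langle v_i,\ v_i\varphi^2+v_j,\ v_j\varphi\rangle$, and the image of the mixed vector is $cv_i+v_j\varphi\in T$. Whether a given seam works depends on the coefficients of the invariant factors, and you have not shown that some seam always works.

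\textbf{Pairing odd blocks cannot reach the bound.} Relative to your baseline $\sum_i\lfloor d_i/2\rfloor=(n-r)/2$, the required gain is $\min\bigl(\sum_i(\lceil d_i/2\rceil-1),\lfloor r/2\rfloor\bigr)$. A pair of two $1$-blocks gains nothing: all degree-one invariant factors coincide, so $\varphi$ is scalar on their span. For example, let $\varphi$ be unipotent with Jordan blocks of sizes $1,1,4$, with eigenvectors $u_1,u_2$, a cyclic vector $w$ of the $4$-block, and $N=\varphi-1$. Then $\mr(\varphi)=3=n/2$, your baseline gives only $2$, and the only odd pair is $\{u_1,u_2\}$, so your scheme yields $2$. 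Yet $T=\langle w,\ wN+u_1,\ wN^2+u_2\rangle$ satisfies $T+T\varphi=T+TN=V$, so it is a $3$-dimensional antiinvariant subspace. Here an \emph{even} block must be glued with two short blocks at once. Similarly, block sizes $1,1,1,5$ require a gain of $2$, while pairing odd blocks yields at most $1$. So your closing paragraph (``the count $n-s$ is exactly the bound to be met'') is unfounded. What is missing is a construction in which a long block can be glued with several other blocks simultaneously, with every seam verified.

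\textbf{How the paper handles it.} The paper gives no proof of its own. It refers to the literature and remarks that the lemma is immediate from the interlacing theorem (Lemma \ref{INTERLAC}), which is the quickest way to close your gap. $V$ has an $m$-dimensional antiinvariant subspace $T$ if and only if $\varphi$ is similar to a matrix $Q$ with $\PC_m(Q)=0$: for one direction, take a basis of $T$ followed by a basis of a complement of $T$ that contains $T\varphi$. Now apply interlacing with $A=0_m$, all of whose invariant factors are $x$. For invertible $\varphi$, and with $i_j(\varphi)=0$ for $j>n$, the conditions $i_k(\varphi)\mid x\mid i_{k+2(n-m)}(\varphi)$ for $1\le k\le m$ reduce to two facts. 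The first is $i_m(\varphi)=1$, i.e.\ $m\le\mr(\varphi)$. The second is $1+2(n-m)>n$, i.e.\ $2m\le n$.
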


For a proof see \cite[Theorem]{Sourour-1986b} or \cite[Theorem]{BH-1984}.

\begin{lemma} \label{Schur-1a}
Let $M \in \GL(n,K)$. Let $m \in \mathbb{N}$, and let $m \le \mr(M), \frac{n}{2}$. Then $M$ is similar to a matrix
	\[
	P = \left[ \begin{array}{ccc} 0  & 0 & \Idm_m\\ 0 &  B & X \\ C & D & Y \end{array}
	\right],
	\]
where $B \in \GL(n-2m, K), C \in \GL(m, K)$. 
\end{lemma}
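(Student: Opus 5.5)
We use Lemma \ref{Sourour} with $V = K^n$ (row vectors, matrices acting on the right) and $\varphi$ given by $M$. Since $m \le \mr(M), \frac{n}{2}$, there is an $m$-dimensional subspace $T$ with $T \cap TM = 0$. Pick a basis $t_1,\dots,t_m$ of $T$. Then the $2m$ vectors $t_1,\dots,t_m,\, t_1M,\dots,t_mM$ are linearly independent. Extend them to a basis of $V$ by choosing a complement $U$ of $T \oplus TM$ with basis $u_1,\dots,u_{n-2m}$. We order the basis as
\[
(t_1,\dots,t_m,\; u_1,\dots,u_{n-2m},\; t_1M,\dots,t_mM).
\]
Let $P$ be the matrix of $\varphi$ in this basis. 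By construction $t_iM$ is the $i$-th vector of the third block. So the first block row of $P$ is $[\,0\;\;0\;\;\Idm_m\,]$, which gives the required shape of the first row. The second and third block rows are arbitrary and are written $[\,A'\;\;B\;\;X\,]$ and $[\,C\;\;D\;\;Y\,]$.

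It remains to get $A' = 0$ and to make $B$ and $C$ nonsingular, using the freedom in the choice of $U$ (and of $T$).

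\textbf{Nonsingularity of $C$.} Since $P$ is nonsingular, expand along the first block row. The determinant of $P$ equals, up to sign, $\det\left[\begin{array}{cc} A' & B \\ C & D\end{array}\right]$. This alone does not force $C$ to be nonsingular.

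\textbf{Making $A' = 0$ and $B$ nonsingular.} Choose $U$ so that $UM \subseteq U \oplus TM$, i.e. $U$ has no $T$-component under $M$. This is the main difficulty. A natural choice is to take $U$ inside a complement that is adapted to $M^{-1}$. Namely, consider $W = (T \oplus TM)M^{-1}... $; more simply, use a simultaneous antiinvariance for $M^{-1}$.

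The cleanest route is to apply the block permutation of Lemma \ref{Schur-0}(1) together with a symmetric argument, and I would try the following first. Work with $W := TM^{-1}$ in place of $U$-considerations, and order the basis so that the middle block is a complement $U$ of $T + TM$ chosen to contain $(T\oplus TM)\cap (T \oplus TM)M^{-1}$-free directions. Then:
\begin{itemize}
\item $C$ is the matrix of the map $T M \to V \to V/(U \oplus TM) \cong T$, i.e. of $t_iM \mapsto t_iM^2 \bmod (U\oplus TM)$.
\item $B$ is the compression of $M$ to $U$ along $T \oplus TM$.
\end{itemize}

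Since only the shape stated matters, a simpler sufficient approach is to conjugate $P$ by block unitriangular matrices that fix the first block row up to column operations. First, conjugating by $\left[\begin{array}{ccc}\Idm&0&0\\ E&\Idm&0\\0&0&\Idm\end{array}\right]$-type matrices (changing $u_j$ by elements of $T$) modifies $A'$ by $-EX$-type terms plus $B E$ terms, and also alters the first row. Compensating this by also shifting the $t_iM$ and the $t_i$ lets us kill $A'$ whenever the relevant linear equation is solvable.

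I expect this solvability, together with nonsingularity of $B$ and $C$, to be the main obstacle. I would handle it by choosing $T$ generically among antiinvariant subspaces, using $m \le \mr(M)$. If $K$ is small and genericity fails, I would fall back on an induction on $m$ that peels off one antiinvariant vector at a time via Lemma \ref{cyc1b}-style companion forms.
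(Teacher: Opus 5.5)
There is a genuine gap. Your first step is right: Lemma~\ref{Sourour} and the ordered basis $(t_1,\dots,t_m,u_1,\dots,u_{n-2m},t_1M,\dots,t_mM)$ give first block row $[\,0\;\;0\;\;\Idm_m\,]$. Your sense that the freedom in choosing the complement $U$ should do the rest is also right. But you never carry this out. Making $C$ nonsingular, getting $A'=0$ and getting $B$ nonsingular are all left as an open ``main obstacle.'' You defer them to a generic choice of $T$, which is problematic over small fields such as $\GF(2)$ and $\GF(3)$ where the paper uses this lemma, or to an unspecified induction.

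Your computation of the natural conjugation is also wrong, and that hides the short argument. Shifting the $u_j$ by elements of $T$ corresponds to $L=\left[\begin{array}{ccc}\Idm_m&0&0\\ E&\Idm_{n-2m}&0\\ 0&0&\Idm_m\end{array}\right]$, and
\[
L^{-1}PL=\left[\begin{array}{ccc}0&0&\Idm_m\\ A'+BE&B&X-E\\ C+DE&D&Y\end{array}\right].
\]
The first block row is \emph{not} altered and there is no $EX$ term. The point you missed is that $C$ is replaced by $C+DE$. Since $0\ne\det P=\pm\det\left[\begin{array}{cc}A'&B\\ C&D\end{array}\right]$, the block row $[\,C\;\;D\,]$ has rank $m$. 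So the column spaces of $C$ and $D$ together span $K^m$, and over any field one can choose $E$ with $C+DE$ nonsingular. Concretely, keep columns of $C$ that form a basis of $K^m$ modulo the column space of $D$. Then correct each remaining column by a vector of that column space so that all columns become independent.

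Next, after renaming the blocks, shift the $u_j$ by elements of $TM$. With $R=\left[\begin{array}{ccc}\Idm_m&0&0\\ 0&\Idm_{n-2m}&F\\ 0&0&\Idm_m\end{array}\right]$,
\[
R^{-1}PR=\left[\begin{array}{ccc}0&0&\Idm_m\\ A'-FC&B-FD&X+BF-FY-FDF\\ C&D&Y+DF\end{array}\right],
\]
so $F=A'C^{-1}$ kills the $(2,1)$ block. This achieves your condition $UM\subseteq U\oplus TM$ without changing $T$. The new middle block is then automatically nonsingular, since $\det P=\pm\det(B-A'C^{-1}D)\det C$. These two block-unitriangular conjugations are exactly the paper's proof. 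No genericity, no modification of $T$ and no induction is needed.
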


\begin{proof}
By \ref{Sourour}, $M$ is similar to a matrix
	\[
	P = \left[ \begin{array}{ccc} 0  & 0 & \Idm_m\\ A &  B & X \\ C & D & Y \end{array}
	\right],
	\]
where $B \in \M(n-2m, K), C \in \M(m, K)$. 
	\[
	P \sim  
	\left[ \begin{array}{ccc} \Idm_m  & 0 & 0\\ -S &  \Idm_{n-2m} & 0 \\ 0 & 0 &  \Idm_m\end{array} \right]
	\left[ \begin{array}{ccc} 0  & 0 & \Idm_m\\ A &  B & X \\ C & D & Y \end{array} \right]
	\left[ \begin{array}{ccc} \Idm_m  & 0 & 0\\ S &  \Idm_{n-2m} & 0 \\ 0 & 0 &  \Idm_m\end{array} \right]
	\]
		\[
	= \left[ \begin{array}{ccc} 0  & 0 & \Idm_m\\ A +BS &  B & X-S \\ C+DS & D & Y \end{array} \right].
	\]
	So we can achieve that $C$ is nonsingular. Further, 
	\[
	P \sim  
	\left[ \begin{array}{ccc} \Idm_m  & 0 & 0\\ 0 &  \Idm_{n-2m} & -T \\ 0 & 0 &  \Idm_m\end{array} \right]
	\left[ \begin{array}{ccc} 0  & 0 & \Idm_m\\ A &  B & X \\ C & D & Y \end{array} \right]
	\left[ \begin{array}{ccc} \Idm_m  & 0 & 0\\ S &  \Idm_{n-2m} & T \\ 0 & 0 &  \Idm_m\end{array} \right]
	\]
	\[
	= \left[ \begin{array}{ccc} 0  & 0 & \Idm_m\\ A -TC &  B-TD & X+BT -TY -TDT \\ C+DS & D & Y +DT \end{array} \right].
	\]
	Putting $T = AC^{-1}$, we are done.
\end{proof}

In fact, \ref{Sourour} and \ref{Schur-1a} are trivial  consequences of the
interlacing theorem of R. C. Thompson \cite[Theorem 6]{RCThompson-1979} and de E. M. de S\'a \cite[Theorem 5.4]{deSa-1979}:

\begin{lemma} \label{INTERLAC}
Let $A \in \M(m,K)$, and let $M \in \GL(m+q,K)$. Then $M$ is similar to a matrix $Q$ such that
$\PC_m(Q) = A$ if and only if the invariant factors of $M$ and $A$ satisfy the following inequalities $i_k(M)| i_k(A)| i_{k+2q}(M)$.
\end{lemma}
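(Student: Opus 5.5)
We would argue in the language of $K[x]$-modules and Smith normal forms, taking invariant factors monic, ordered by divisibility $i_1\mid i_2\mid\cdots$, and padded so that the inequalities make sense: $i_k(A)=1$ for $k\le 0$, and $i_k(\cdot)=0$ beyond the size of the matrix. With this convention $M$ has the $n=m+q$ invariant factors $i_1(M),\dots,i_n(M)$ and $A$ has $m$ of them, indexed compatibly. The statement is the Thompson--de S\'a interlacing theorem, so the plan splits into necessity, which is elementary, and sufficiency, which is the real work.

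\emph{Necessity.} If $M\sim Q$ with $\PC_m(Q)=A$, then $xI_m-A$ is obtained from $xI_n-Q$ by deleting $q$ rows and $q$ columns. We reduce to the case of deleting one row, or one column, at a time; this gives a chain of $2q$ steps. For a single deletion over the PID $K[x]$ we compare determinantal divisors: every $k\times k$ minor of the smaller matrix is a minor of the larger one, so $d_k(\text{large})\mid d_k(\text{small})$. Expanding a $(k+1)$-minor of the larger matrix along the deleted line shows $d_{k+1}(\text{large})$ lies in the ideal generated by $k$-minors of the smaller one. Passing to quotients $i_k=d_k/d_{k-1}$ yields the one-step interlacing $i_k(\text{large})\mid i_k(\text{small})\mid i_{k+1}(\text{large})$, with indices shifted appropriately. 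Composing the $2q$ steps gives $i_k(M)\mid i_k(A)\mid i_{k+2q}(M)$.

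\emph{Sufficiency.} We proceed by induction on $q$. The combinatorial step is to show that any admissible $A$ for $M$ can be joined to $M$ by a chain of monic divisor sequences $\mathcal{F}_0=(i_k(A)),\dots,\mathcal{F}_q=(i_k(M))$. Each consecutive pair should satisfy the $q=1$ interlacing, and each $\mathcal{F}_j$ should have total degree $m+j$. We would build $\mathcal{F}_j$ prime by prime: for each irreducible $p$, the exponent sequences of $p$ in $A$ and $M$ interlace with gap $2q$, and one fills in intermediate exponent sequences greedily. The degree constraint is then fixed by absorbing surplus degree into the last (largest) invariant factor, using a linear irreducible factor, or more generally a polynomial coprime to everything present. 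Granting the chain, it suffices to treat $q=1$: given an $m\times m$ matrix $A$ and a target $M$ with $i_k(M)\mid i_k(A)\mid i_{k+2}(M)$ and the right total degree, find a bordering
\[
\left[\begin{array}{cc} A & b\\ c & d\end{array}\right]
\]
similar to $M$. We take $A$ as a direct sum of companion matrices of its invariant factors and let $b,c$ be vectors hitting the cyclic generators. The characteristic matrix of the bordered matrix can then be reduced to a block tridiagonal polynomial matrix, whose Smith form is computed explicitly. Finally $b,c,d$ are chosen so that the new invariant factors $i_k(M)$ are obtained by merging neighbouring factors of $A$ in the pattern allowed by the two-step interlacing.

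The main obstacle is this $q=1$ bordering construction, specifically the case where $K$ is small and one needs polynomials of prescribed degree coprime to given ones in the combinatorial step. Since the lemma is exactly \cite[Theorem 6]{RCThompson-1979} and \cite[Theorem 5.4]{deSa-1979}, our fallback for these two points is to cite those proofs rather than reprove them; in the paper only the easy direction and the existence statement are actually used, via \ref{Sourour} and \ref{Schur-1a}.
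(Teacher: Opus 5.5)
The paper gives no proof of this lemma; it is quoted from Thompson \cite[Theorem 6]{RCThompson-1979} and de S\'a \cite[Theorem 5.4]{deSa-1979}. The part you do argue, necessity, has a genuine gap.

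\textbf{Necessity.} The relations you derive are correct: $d_k(L)\mid d_k(S)$ and $d_k(S)\mid d_{k+1}(L)$. The step ``passing to quotients'' fails, however, because divisibility between determinantal divisors is not inherited by their successive quotients. For a prime $p$, the data $i(L)=(1,p^2,p^2)$ and $i(S)=(p,p)$ give $d(L)=(1,p^2,p^4)$ and $d(S)=(p,p^2)$. These satisfy every relation you wrote, yet $i_2(L)=p^2\nmid p=i_2(S)$. Likewise $i(L)=(1,p,p^2)$ and $i(S)=(1,p^3)$ satisfy them but violate $i_2(S)\mid i_3(L)$. So the one-step interlacing needs a different input.

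A standard fix is to localize at $p$ and put $\rho_t(X)=\#\{k : p^t\nmid i_k(X)\}$. For an $r\times s$ matrix $X$ with cokernel $N$ one has $\rho_t(X)=r-\dim_{K[x]/(p)} p^{t-1}N/p^tN$. Adding a column $c$ to $S$ replaces $N$ by $N/\langle \bar c\rangle$, and the kernel of the induced surjection on $p^{t-1}(\cdot)/p^t(\cdot)$ is spanned by the class of $p^e\bar c$, where $(p^e)=\{r : r\bar c\in p^{t-1}N\}$. Hence $\rho_t(S)\le\rho_t(L)\le\rho_t(S)+1$. Taking $t=\nu_p(i_k(S))+1$, respectively $t=\nu_p(i_{k+1}(L))+1$, gives $i_k(L)\mid i_k(S)\mid i_{k+1}(L)$. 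Row deletions follow by transposition, and your composition of $2q$ steps then goes through.

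\textbf{Sufficiency.} You give a plan, not a proof. The piece you leave open, realizing a one-step ($q=1$) bordering with prescribed invariant factors, is the whole content of the Thompson--de S\'a theorem. Falling back on the citation there is exactly what the paper does, so you lose nothing relative to it, but you prove nothing either.

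Two remarks on your reduction to $q=1$.
\begin{itemize}
\item The small-field worry is unnecessary. The largest entry $f^{(j)}_{m+j}$ of each intermediate sequence has no upper constraint, since its partner $f^{(j+1)}_{m+j+2}$ is $0$. So a degree deficit can be absorbed by multiplying it by any polynomial, e.g.\ a power of $x$; no coprimality is needed.
\item What you do need, and do not address, is that the interpolated sequences never exceed degree $m+j$. This is easiest in conjugate form. Let $\alpha'_t(p)$ and $\gamma'_t(p)$ be the numbers of invariant factors of $A$ and $M$ divisible by $p^t$. The hypothesis is then equivalent to $|\gamma'_t(p)-\alpha'_t(p)|\le q$ for all $p$ and $t$. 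At level $j$ take $\max(\alpha'_t-j,\ \gamma'_t-q+j,\ 0)$; consecutive levels differ by at most $1$, which is exactly the one-step interlacing. This choice is bounded by $\frac{(q-j)\alpha'_t+j\gamma'_t}{q}$, whose weighted sum over $p$ and $t$ is exactly $m+j$.
\end{itemize}
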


\begin{lemma} \label{Schur-1b}
Let $M \in \GL(n,K)$. Let $m \in \mathbb{N}$, and let  $m \le \frac{n}{2}, \mr(M)$. There exist matrices  $B \in \GL(n-2m, K), C \in \GL(m, K), D \in \M(m, n-2m)$ such that $M$ is similar to matrices $M_S$ with $\PC_m(M_S) = S \in \GL(m, K)$ and 
\[
	\SC_m(M_S) =	\left[ \begin{array}{cc} B  & 0\\ D &  -CS^{-1}  \end{array}
	\right].
\]
\end{lemma}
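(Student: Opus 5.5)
The plan is to start from the normal form of Lemma \ref{Schur-1a} and apply one further block-unipotent conjugation that turns the top-left corner into an arbitrary prescribed $S$. The point is that the data $B, C, D$ stay fixed, independent of $S$; only the corner changes.

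First, since $m \le \frac{n}{2}, \mr(M)$, Lemma \ref{Schur-1a} shows that $M$ is similar to
\[
P = \left[ \begin{array}{ccc} 0 & 0 & \Idm_m\\ 0 & B & X \\ C & D & Y \end{array}\right],
\]
with $B \in \GL(n-2m,K)$ and $C \in \GL(m,K)$. We fix these $B, C, D$ once and for all.

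Next, for an arbitrary $R \in \M(m,K)$ we conjugate $P$ by
\[
E = \left[ \begin{array}{ccc} \Idm_m & 0 & 0\\ 0 & \Idm_{n-2m} & 0 \\ R & 0 & \Idm_m\end{array}\right],
\qquad
E^{-1} = \left[ \begin{array}{ccc} \Idm_m & 0 & 0\\ 0 & \Idm_{n-2m} & 0 \\ -R & 0 & \Idm_m\end{array}\right].
\]
A direct block multiplication gives
\[
E^{-1}PE = \left[ \begin{array}{ccc} R & 0 & \Idm_m\\ XR & B & X \\ C+YR-R^2 & D & Y-R \end{array}\right].
\]
Thus $\PC_m(E^{-1}PE) = R$. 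We choose $R = S \in \GL(m,K)$ and set $M_S := E^{-1}PE$.

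Finally, we compute the Schur complement. Write $\SC_m(M_S) = \mathcal{D} - \mathcal{C}S^{-1}\mathcal{B}$, where $\mathcal{B} = [\,0 \ \ \Idm_m\,]$ is the top-right block row and $\mathcal{C}$ is the block column with entries $XS$ and $C+YS-S^2$. Then $\mathcal{C}S^{-1}\mathcal{B}$ has zero first block column, and its second block column consists of $X$ and $CS^{-1}+Y-S$. Subtracting this from the lower-right block, whose columns are $(B,D)$ and $(X, Y-S)$, gives
\[
\SC_m(M_S) = \left[ \begin{array}{cc} B & 0\\ D & -CS^{-1}\end{array}\right].
\]
This is exactly the claimed form.

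There is no real obstacle here. The only nontrivial input is already contained in Lemma \ref{Schur-1a}, namely the antiinvariant subspace from Lemma \ref{Sourour} together with the normalization giving $A = 0$ and $C$ nonsingular. The one thing to find is the right conjugating matrix $E$, and the check that the $X$ and $Y$ terms cancel in the Schur complement is a short block computation.
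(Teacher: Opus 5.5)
Your proof is correct and is the paper's own argument: you conjugate the normal form of Lemma \ref{Schur-1a} by the block-unipotent matrix with $S$ in the $(3,1)$ block, keeping $B,C,D$ fixed for every $S$. Your lower-right entry $Y-S$ is right (the paper's display misprints it as $Y-CS$), and your explicit Schur-complement computation supplies the check that the paper leaves to the reader.
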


\begin{proof}
Using \ref{Schur-1a}, we see that $M$ is similar to  matrices
\[
M_S =	\left[ \begin{array}{ccc} \Idm_m  & 0 & 0\\ 0 &  \Idm_k & 0 \\ -S & 0 & \Idm_m \end{array}\right]
\left[ \begin{array}{ccc} 0  & 0 & \Idm_m\\ 0 &  B & X \\ C & D & Y \end{array}
\right]
\left[ \begin{array}{ccc} \Idm_m  & 0 & 0\\ 0 &  \Idm_k & 0 \\ S & 0 & \Idm_m \end{array}
\right]
\]
\[
= \left[ \begin{array}{ccc} S  & 0 & \Idm_m\\ XS &  B & X \\ C + (Y-S)S & D & Y-CS \end{array}\right].
\]
\end{proof}

\begin{lemma} \label{Schur-2}
Let $\delta \in K^*$. Let $n \ge 4$. Let $M \in \GL(n,K)$. Let $m \in \mathbb{N}$, 
and let  $2 \le m \le \frac{n}{2}, \mr(M)$.  If $|K| \le 3$ let $m, \mr(M) \ge 3$. Then 
$M$ is similar to a matrix $P$ such that $\PC_m(P)$ and $\SC_m(P)$ are nonprimary, and $\det \PC_m(P) = \delta$.
\end{lemma}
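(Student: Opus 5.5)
The plan is to apply Lemma \ref{Schur-1b}. Since $m \le \frac{n}{2}, \mr(M)$, there are fixed matrices $B \in \GL(n-2m,K)$, $C \in \GL(m,K)$ and $D$ such that for every $S \in \GL(m,K)$ the matrix $M$ is similar to $M_S$ with
\[
\PC_m(M_S) = S, \qquad \SC_m(M_S) = \left[ \begin{array}{cc} B & 0\\ D & -CS^{-1} \end{array} \right].
\]
So everything reduces to choosing $S$ with three properties:
\begin{enumerate}
\item $\det S = \delta$;
\item $S$ is nonprimary;
\item $\SC_m(M_S)$ is nonprimary.
\end{enumerate}
The Schur complement is block lower triangular, so its characteristic polynomial is $\chi_B \cdot \chi_{-CS^{-1}}$. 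A sufficient condition for (3) is that $\chi_{-CS^{-1}}$ has two distinct irreducible factors. Then $\SC_m(M_S)$ has at least two distinct irreducible factors in its characteristic polynomial, hence is nonprimary, regardless of $B$ and $D$. If $n=2m$, $B$ is empty and the same condition is exactly what is needed.

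We set $R := -CS^{-1}$, so that $S = -R^{-1}C$. Then $\det S = (-1)^m \det C / \det R$. Condition (1) becomes $\det R = (-1)^m\det C\,\delta^{-1} =: \varepsilon$, a prescribed nonzero scalar. We now need $R \in \GL(m,K)$ with $\det R = \varepsilon$ such that both $R$ and $R^{-1}C$ are nonprimary. To arrange this, we take $R$ in a convenient normal form adapted to $C$. Replacing $M_S$ by a conjugate under $Y\oplus Z$ (Lemma \ref{Schur-0}(2)) changes $C$ by an equivalence, so we may assume $C$ has a simple form, for instance a companion matrix or $\Idm_m$ scaled suitably.

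For $|K| \ge 4$ and $m \ge 2$ we take $R$ diagonal, $R = \mathrm{diag}(r_1,\dots,r_m)$, with at least two distinct entries and product $\varepsilon$. Then $R$ is nonprimary, and $S = -R^{-1}C$ must also be nonprimary. Most cleanly, we first use the freedom in $C$ (left multiplication by invertible $Y$, as from Lemma \ref{Schur-1a}, where $C$ only needed to be nonsingular) to arrange $C = \Idm_m$. Then $S = -R^{-1}$ is diagonal with two distinct eigenvalues, hence nonprimary, and $R$ is nonprimary as well. We need $r_1 \ne r_2$ with $\prod r_i = \varepsilon$. This is easy when $|K|\ge 3$: choose $r_1 \neq r_2$ freely and put $r_3=\dots=r_m=1$, adjusting one entry to fix the product while keeping two entries distinct. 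Here $|K| \ge 3$ gives enough room for $m=2$.

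The main obstacle is the case $|K| \le 3$, where diagonal choices fail. Over $\GF(2)$ every diagonal invertible matrix is the identity. Over $\GF(3)$ the case $m=2$ forces either $\{1,-1\}$ with determinant $-1$ only, or a scalar matrix. This is why the hypothesis $m \ge 3$ enters. In that case we will take $R = R_1 \oplus R_2$ with $R_1$ of size $1$ and $R_2$ of size $m-1 \ge 2$ having irreducible characteristic polynomial $\ne x-r_1$, or of size $2$ plus $1$, with determinants chosen to give $\varepsilon$. For example, over $\GF(2)$ we take $R = 1 \oplus \mathrm{companion}(x^2+x+1)\oplus \Idm_{m-3}$, and over $\GF(3)$ we use $(x^2+1)$ blocks together with $\pm1$ to realize either determinant. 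With $C=\Idm_m$, $S=-R^{-1}$ is then again nonprimary because its characteristic polynomial has two distinct irreducible factors. The remaining point is to justify that $C$ can be normalized to $\Idm_m$ without disturbing the shape of Lemma \ref{Schur-1b}, or otherwise to redo the computation of Lemma \ref{Schur-1b} with $C$ replaced by $\Idm_m$. I expect this normalization check to be the main technical step. The assumption $\mr(M)\ge 3$ for small fields is needed precisely to have $m\ge3$ available in Lemma \ref{Schur-1b}.
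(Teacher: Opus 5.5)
\textbf{There is a genuine gap: the normalization $C=\Idm_m$ is false in general.}

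Your reduction is correct and matches the paper's starting point. By Lemma \ref{Schur-1b}, after conjugating $C$, it suffices to factor $-C=RS$ with $R,S$ nonprimary and $\det S=\delta$. The step you flag as ``the main technical step'' cannot be carried out, for the following reasons.
\begin{itemize}
\item Keeping the block $\Idm_m$ in position $(1,3)$ under a block-diagonal conjugation forces the conjugator to be $G\oplus H\oplus G$, and this only replaces $C$ by $G^{-1}CG$.
\item Lemma \ref{Schur-0}(2) conjugates $\PC_m$ and $\SC_m$. It does not give an equivalence $C\mapsto YCZ$.
\item Left-multiplying $C$ by some $Y$ is not a similarity of $M$. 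If the $(1,3)$ block were $G$ instead of $\Idm_m$, renormalizing would replace $C$ by $GC$.
\end{itemize}
More decisively, $C$ is constrained by $M$. For $n=2m$ one has $\det M=\det S\cdot\det(-CS^{-1})=(-1)^m\det C$. Take $M=\mathrm{diag}(1,1,2,3)$ over $\mathbb{Q}$, so $n=4$ and $m=\mr(M)=2$. Every representation as in Lemma \ref{Schur-1b} then has $\det C=6$, so $C$ cannot even be scalar. Your diagonal and companion choices of $R$ therefore only settle the case of scalar $C$, which the paper dismisses as trivial.

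\textbf{What is missing is a factorization of an arbitrary nonscalar $C$.}

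For $K\ne\GF(2)$ the paper uses Sourour's theorem. A nonscalar $C$ is similar to $LU$, where $L$ is lower triangular with any prescribed diagonal and $U$ is upper triangular with $m-1$ prescribed diagonal entries. Take $\det L=\det C/\delta$ and put two distinct diagonal entries in each factor. Then $R=-L$ and $S=U$ are nonprimary with $\det S=\delta$. This is exactly where the hypothesis $|K|\ge4$, or $|K|=3$ with $m\ge3$, is used.

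Over $\GF(2)$ all invertible triangular matrices are unipotent, so the paper instead writes $C$, up to similarity, as
\[
\left[\begin{array}{cc}\Idm_2&b\\ c&E\end{array}\right]=\left[\begin{array}{cc}X&0\\ cX&L\end{array}\right]\left[\begin{array}{cc}X^{-1}&X^{-1}b\\ 0&U\end{array}\right],
\]
with $X^2+X+\Idm_2=0$ and $E+cb=LU$ a product of unipotent matrices. Both factors have characteristic polynomial $(x^2+x+1)(x+1)^{m-2}$, which is nonprimary because $m\ge3$.

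Be aware of one limitation of this shape. It requires $C$ to be similar to a matrix with principal corner $\Idm_2$, and for $m=3$ that forces $1$ to be an eigenvalue of $C$. So elements $C\in\GL(3,2)$ of order $7$ need a separate check. For example, a character computation in $\GL(3,2)$ shows that such $C$ is a product of two elements of order $3$.
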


\begin{proof}
Clearly, $M$ is similar to a matrix $\lambda \Idm_p \oplus Q$, where $\mr(Q) = \mr(M) \ge \frac{n-p}{2}$. So we may assume that $m \le \mr(M)$.
Let $M_S$ as in the proof of lemma \ref{Schur-1b}.
The case that $C$ is scalar is trivial. So let $C$ be nonscalar. If $K \ne \GF(2)$ we can use Sourour's theorem \cite[Theorem 1]{Sourour-1986}: $C$ is similar to the product of a  lower triangular matrix with prescribed diagonal elements and an  upper triangular matrix with $m-1$  prescribed diagonal elements. It remains to consider the case $K = \GF(2)$. Now $C$ is similar to a matrix
\[
	\left[ \begin{array}{cc} \Idm_2  & b\\ c &  E  \end{array} \right]
	= \left[ \begin{array}{cc} X  & 0\\ c X &  L  \end{array} \right]
	\left[ \begin{array}{cc} X^{-1}  & X^{-1} b\\ 0 &  U  \end{array} \right].
\]
where $X^2 + X + \Idm_2 = 0$, and $E + cb = LU$ is the product of 2 unipotent matrices $L$ and $U$.
\end{proof}

So it remains to consider the cases $|K| \le 3, m = 2$ and $|K| \le 3, \mr(M) = 2$.

This requires several additional  calculations. This is not very surprising. In many matrix factorization theorems the authors assume that the ground field has at least 4 elements.\footnote{See also R. C. Thompson's remark \url{https://zbmath.org/0548.15010}}


\subsection{The case $K = \GF(3), n \ge 5$}
\mbox{}

Let  $\mathbb{S}_2, \mathbb{X}_2, \mathbb{U}_2, \mathbb{Y}_2$ be the conjugacy classes of $\GL(2,3)$ with $\mu(\mathbb{S}_2) =x^2-1, \mu(\mathbb{X}_2) = x^2+1, \mu(\mathbb{U}_2) =(x-1)^2, \mu(\mathbb{Y}_2) = x^2 +x-1$.

\begin{remark} \label{Schur-3}
	\begin{enumerate}
		\item[]
		\item $\mathbb{S}_2 \mathbb{S}_2 = \SL(2,3)$.
		\item $\mathbb{X}_2 \mathbb{X}_2 = -\Idm_2 \cup \Idm_2 \cup \mathbb{X}_2$. 
		\item $\mathbb{U}_2 \mathbb{U}_2 = \mathbb{U}_2 \cup -\mathbb{U}_2 \cup \mathbb{X}_2 \cup \Idm_2$.
		\item $\mathbb{X}_2 \mathbb{U}_2 = \mathbb{U}_2 \cup -\mathbb{U}_2$.
		\item $\mathbb{Y}_2 \mathbb{Y}_2 = -\Idm_2 \cup \mathbb{X}_2 \cup \mathbb{U}_2$.
		\item $\mathbb{Y}_2 \mathbb{Y}_2^{-1} = \Idm_2 \cup \mathbb{X}_2 \cup -\mathbb{U}_2$.
		\item $\mathbb{U}_2 \mathbb{Y}_2 = -\mathbb{Y}_2 \cup \mathbb{S}_2$.
		\item $\mathbb{X}_2 \mathbb{Y}_2 = -\mathbb{Y}_2 \cup \mathbb{Y}_2 \cup \mathbb{S}_2$.
	\end{enumerate}
\end{remark}

\begin{lemma} \label{Schur-4}
	Let $M \in \GL(n,3)$ with $\mr(M) \ge 2$.  Let $n \ge 5$. Then 
	$M$ is similar to a matrix $Q$ with $\PC_2(Q) \in \mathbb{S}_2$ such that $\SC_2(Q)$ is nonprimary. Further, $M$ is similar to  matrices $P_1$ and $P_2$, where 
	\begin{enumerate}
		\item $\PC_2(P_1) \in \mathbb{U}_2,  \PC_2(P_2) \in \mathbb{X}$ or
		\item $\PC_2(P_1) \in -\mathbb{U}_2,  \PC_2(P_2) = -\Idm_2$.
	\end{enumerate}
	and  $\SC_2(P_1)$ and $\SC_2(P_2)$  are nonprimary. 
\end{lemma}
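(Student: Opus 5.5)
The plan is to use the family $M_S$ from Lemma \ref{Schur-1b} with $m = 2$. Since $n \ge 5$ and $\mr(M) \ge 2$, we have $2 \le \mr(M), \frac{n}{2}$, so Lemma \ref{Schur-1b} applies with $m=2$. It gives fixed matrices $B \in \GL(n-4,3)$, $C \in \GL(2,3)$ and $D$, independent of $S$, such that for every $S \in \GL(2,3)$ the matrix $M$ is similar to some $M_S$ with
\[
\PC_2(M_S) = S, \qquad \SC_2(M_S) = \left[ \begin{array}{cc} B & 0 \\ D & -CS^{-1} \end{array} \right].
\]
The Schur complement is block lower triangular, and $B$ has size $n-4 \ge 1$. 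It is therefore nonprimary as soon as $-CS^{-1}$ has an eigenvalue, in $\overline{K}$, that is not an eigenvalue of $B$. The nonprimary claim is only needed up to this sufficient condition, so it is enough to make $\chi_{-CS^{-1}}$ coprime to the minimal polynomial of $B$, or at least not equal to a power of the single irreducible factor of $\mu(B)$ when $B$ is primary.

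I would then split into cases according to the (at most one) irreducible polynomial $p$ with $\mu(B)$ a power of $p$.
\begin{enumerate}
\item If $B$ is already nonprimary, then $\SC_2(M_S)$ is nonprimary for every $S$. Any choice $S \in \mathbb{S}_2$, $S \in \mathbb{U}_2$, $S \in \mathbb{X}_2$ works, which gives alternative (1).
\item If $B$ is primary with $p$ of degree $\ge 3$, then every $2\times 2$ matrix has eigenvalues that are not roots of $p$. Again every choice works.
\item The real content is $p \in \{x-1,\ x+1,\ x^2+1,\ x^2+x-1,\ x^2-x-1\}$.
\end{enumerate}
In case (3) I need to choose $S$ in the prescribed class so that $-CS^{-1}$, equivalently $CS^{-1}$ up to the sign $\lambda \mapsto -\lambda$, has a characteristic polynomial avoiding $p$ in the sense above. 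As $S$ ranges over a conjugacy class $\mathbb{K}$, the matrix $CS^{-1}$ ranges over $C\,\mathbb{K}^{-1}$. So the question reduces to which classes of $\GL(2,3)$ occur in the products $\mathcal{C}\mathbb{K}$, where $\mathcal{C}$ is the class of $C$. These products are exactly what Remark \ref{Schur-3} records, supplemented by the trivial cases where $C$ is scalar ($\pm\Idm_2$, so $CS^{-1} = \pm S^{-1}$).

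I would run through the classes of $C$: scalar, $\mathbb{S}_2$, $\pm\mathbb{U}_2$, $\mathbb{X}_2$, $\pm\mathbb{Y}_2$ and the remaining irreducible classes. For each, I would read off from the table the set of possible classes of $CS^{-1}$ for $S \in \mathbb{S}_2$, for $S \in \mathbb{U}_2$ (respectively $-\mathbb{U}_2$), and for $S \in \mathbb{X}_2$ (respectively $-\Idm_2$). Then I check that each set contains a class whose eigenvalues are not all roots of $p$. The freedom between alternatives (1) and (2) is used exactly where it is needed: the pairing $(\mathbb{U}_2,\mathbb{X}_2)$ versus $(-\mathbb{U}_2,-\Idm_2)$ corresponds to the determinant values $1$ versus $1$ with sign changes. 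If for $S \in \mathbb{X}_2$ the product $CS^{-1}$ is forced into a bad class, I switch to $S=-\Idm_2$, where $CS^{-1} = -C$ is explicitly controlled.

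The main obstacle is the bookkeeping in case (3). Remark \ref{Schur-3} lists only some products, so for some pairs (for instance $C \in \mathbb{S}_2$ times $\mathbb{U}_2$, or $C$ in the class with $\mu = x^2-x-1$) I expect to compute directly. Where a single $S$-class gives only bad outcomes for a particular $p$, I would first try to exploit the remaining freedom in Lemma \ref{Schur-1a}. The matrix $C$ there is only determined up to the substitution $C \mapsto C + DS'$, so its class can be altered when $D \ne 0$. If $D = 0$, then $M$ splits off $B$ and one can argue on the complement directly.
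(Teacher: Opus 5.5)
Your framework is the paper's: the family $M_S$ of Lemma \ref{Schur-1b} with $m=2$, the block lower triangular $\SC_2(M_S)$ with diagonal blocks $B$ and $-CS^{-1}$, and the reduction to choosing $S$ in a prescribed class so that $\chi_{-CS^{-1}}$ is not a power of the prime $p$ of a primary $B$. The gap is that you stop where the proof starts. You never settle which classes of $C$ cause trouble, you expect failures, and the fallbacks you offer are unsubstantiated and in fact unnecessary. For instance, $D=0$ only gives an invariant subspace, not a direct summand. Your one concrete device is to replace $S\in\mathbb{X}_2$ by $S=-\Idm_2$, so that $-CS^{-1}=C$. This does not help for nonscalar $C$: take $C\in\mathbb{X}_2$ and $p=x^2+1$. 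Moreover, alternative (2) requires switching $\mathbb{U}_2$ to $-\mathbb{U}_2$ at the same time.

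The missing idea is to split according to whether $C$ is scalar.

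\emph{Case $C$ nonscalar.} Then $C$ is cyclic, and for each $\mathbb{K}\in\{\mathbb{S}_2,\mathbb{U}_2,\mathbb{X}_2\}$ the product $\mathcal{C}\mathbb{K}$ contains two classes with coprime characteristic polynomials. Two coprime polynomials cannot both be powers of $p$, so some $S\in\mathbb{K}$ works. Hence the $\mathbb{S}_2$-claim and alternative (1) hold outright. Negation handles $-\mathbb{U}_2$, $-\mathbb{Y}_2$ and the sign in $-CS^{-1}$. The products missing from Remark \ref{Schur-3} are quick to check. A product of determinant $-1$ lies in $\mathbb{S}_2\cup\mathbb{Y}_2\cup-\mathbb{Y}_2$, whose characteristic polynomials are pairwise coprime. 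Direct $2\times 2$ computations give $\mathbb{S}_2\mathbb{U}_2\supseteq\mathbb{S}_2\cup-\mathbb{Y}_2$, $\mathbb{S}_2\mathbb{X}_2\supseteq\mathbb{S}_2\cup-\mathbb{Y}_2$ and $\mathbb{Y}_2\mathbb{S}_2\supseteq\mathbb{U}_2\cup-\mathbb{U}_2$.

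\emph{Case $C=\epsilon\Idm_2$.} Now each $S$-class yields only one class for $-CS^{-1}$:
\begin{itemize}
\item $S\in\mathbb{S}_2$ gives $x^2-1$, which is never a prime power, so this choice always works;
\item $S\in\mathbb{U}_2$ and $S\in\mathbb{X}_2$ give $(x+\epsilon)^2$ and $x^2+1$, which work unless $p\in\{x+\epsilon,x^2+1\}$;
\item $S\in-\mathbb{U}_2$ and $S=-\Idm_2$ give $(x-\epsilon)^2$, which works unless $p=x-\epsilon$.
\end{itemize}
Hence (1) holds if $\epsilon B$ is unipotent and (2) holds otherwise. So it is the scalar case, which you called trivial, that forces the ``or'' in the statement.
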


\begin{proof}
	As above,  $M$ is similar to  matrices $M_S$ with $\PC_2(M_S) = S$ and  
	\[
	\SC_2(M_S) =	\left[ \begin{array}{cc} B  & 0\\ D &  -CS^{-1}  \end{array}
	\right].
	\]
	First let $C$ be nonscalar. By \ref{Schur-3}, the product of 2 cyclic conjugacy classes in $\GL(2,3)$ contains at least 2 coprimary matrices.
	There exist matrices $S \in \mathbb{S}_2, U \in \mathbb{U}_2, X \in \mathbb{X}_2$ such that $\PC_2(M_S), \PC_2(M_U), \PC_2(M_X)$ are nonprimary.
	
	So let $C = \epsilon \Idm_2$ be scalar. Clearly, $\PC_2(M_S)$ is nonscalar for $S \in \mathbb{S}_2$. If $\epsilon B$ is unipotent, then $\PC_2(M_U)$ and $\PC_2(M_X)$ are nonprimary for all $U \in \mathbb{U}_2, X \in \mathbb{X}_2$. 
	If $\epsilon B$ is not unipotent and $-S$ is unipotent, then $\PC_2(M_S)$ is nonprimary. 
\end{proof}

\begin{corollary} \label{Schur-5}
	Let $P \in \GL(n,3)$ with $\mr(P) = 2$.  Let $3 \le m \le n-3$, and let $\epsilon = \pm 1$. Then $P$ is similar to a matrix $Q$ such that $\PC_m(Q)$ and $\SC_m(Q)$ are not primary, and $\det \PC_m(Q) = \epsilon$.
\end{corollary}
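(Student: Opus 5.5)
We reduce to the $2\times 2$ situation of Lemma \ref{Schur-4} and then use the Schur complement symmetry of Lemma \ref{Schur-0}. Since $\mr(P)=2$, we have $n\ge 5$. Apply Lemma \ref{Schur-4} to $P$. This gives a conjugate $Q_0$ of $P$ with $\PC_2(Q_0)=A$ in one of $\mathbb{S}_2$, $\pm\mathbb{U}_2$, $\mathbb{X}_2$, $-\Idm_2$, and with $R:=\SC_2(Q_0)\in\GL(n-2,3)$ nonprimary.

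We choose $A$ according to the required determinant. Note $\det\mathbb{S}_2=-1$ while $\det(\pm\mathbb{U}_2)=\det\mathbb{X}_2=\det(-\Idm_2)=1$. So a class of determinant $\epsilon$ or $-\epsilon$ is available, whichever we need later.

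Next we pass to a finer block. By Lemma \ref{Schur-0}(1), $P$ is similar to a matrix $Q_1$ with $\PC_{n-2}(Q_1)=R$ and $\SC_{n-2}(Q_1)=A$. To get the split $(m,n-m)$ with $3\le m\le n-3$, the idea is to conjugate only inside the big block $R$. The relevant fact is the transitivity of Schur complements (quotient formula): if $R=\begin{bmatrix}R_{11}&\ast\\ \ast&\ast\end{bmatrix}$ with $R_{11}$ invertible, then $\SC(R_{11}\text{ in }Q)=\SC(\SC(R_{11}\text{ in }R)\ \text{within the remaining block}\dots)$.

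Concretely, reorder so that $P\sim Q$ with $\PC_m(Q)=R'$, where $R'$ is the corner of a suitable conjugate of $R$ (by Lemma \ref{Schur-0}(2), conjugating $R$ by $Y\oplus \Idm$ keeps $A$ fixed). Then $\SC_m(Q)$ is a $(n-m)\times(n-m)$ matrix whose Schur complement of its leading $(n-m-2)$-block is again $A$, or some arrangement of this kind.

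The key subproblem is therefore the following. Given nonprimary $R\in\GL(n-2,3)$, find a conjugate of $R$ whose $\PC_{m}$ (or $\PC_{m-2}$, depending on which side $A$ goes) is nonprimary with prescribed determinant, and whose Schur complement is also nonprimary. Then the $A$-block is attached to the complementary side, which we must also keep nonprimary.

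Attaching $A$ to a nonprimary matrix can be handled by Lemma \ref{Schur-1}/\ref{Schur-0}. The complementary block becomes a matrix whose corner is nonprimary and whose Schur complement is $A$. We then need that the whole block is nonprimary. The point is that we have freedom in the off-diagonal blocks $B,C$: these can be chosen to produce a block-triangular matrix $\begin{bmatrix}T&0\\ \ast&A\end{bmatrix}$ with $T$ and $A$ having coprime characteristic polynomials. Such a matrix is automatically nonprimary. Choosing $A$ among the options of Lemma \ref{Schur-4} (which have different primary eigenstructures, e.g.\ $\mathbb{S}_2$ is already nonprimary, while $\mathbb{U}_2$ versus $\mathbb{X}_2$ differ) gives the needed coprimality.

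To handle the corner of $R$, we use that $\mr(R)$ may be larger than $\mr(P)$. If $\mr(R)\ge 3$ and the sizes allow it, we apply Lemma \ref{Schur-2} to $R$. Otherwise we iterate Lemma \ref{Schur-4} on $R$, which peels off another $2\times 2$ block with prescribed class.

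The main obstacle is the bookkeeping of the determinant $\epsilon$ together with nonprimarity in small cases, where $m$ or $n-m$ equals $3$ and the residual block is $1\times1$. A $1\times 1$ block is primary, so there we must rely on the $2$-block $A$ (choosing $A\in\mathbb{S}_2$, or $A$ with an eigenvalue different from the scalar) to make a $3\times3$ block nonprimary. I would first try to place the $\mathbb{S}_2$ block on the side of size $3$. Its two distinct eigenvalues $\pm1$ make that side nonprimary whenever the block is triangular. The determinant is then fixed by choosing the $1\times1$ entry, which is allowed because the interlacing Lemma \ref{INTERLAC} permits a prescribed $1\times1$ corner when $\mr\ge 1$. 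I would then check the remaining parity/determinant combinations case by case.
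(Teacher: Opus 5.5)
There is a genuine gap: the steps that carry all the weight are asserted, not proved.

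\textbf{The off-diagonal blocks.} You claim ``freedom in the off-diagonal blocks'' to make $\SC_m(Q)$ block triangular with diagonal blocks $T$ and $A$. That freedom is not there. Once $Q$ is a fixed conjugate of $P$, the block $\SC_m(Q)$ is determined. Your construction controls only its corner $\SC_m(R^Y)$ and its own Schur complement $A$, and that is not enough for nonprimarity. Over $\GF(3)$, $\left[\begin{smallmatrix}1&1\\1&0\end{smallmatrix}\right]$ has corner $1$ and Schur complement $-1$, with coprime characteristic polynomials, yet its characteristic polynomial $x^2-x-1$ is irreducible.

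\textbf{The recursion on $R$.} The recursion on $R=\SC_2(Q_0)$ is also unfounded. Lemma \ref{INTERLAC}, applied to $Q_1$ (where $R$ is a corner of codimension $2$), gives $i_3(P)\mid i_3(R)$, hence $\mr(R)\le 2$. So Lemma \ref{Schur-2}, which over $\GF(3)$ needs $\mr\ge 3$, never applies. Lemma \ref{Schur-4} needs $\mr(R)\ge 2$ and size $n-2\ge 5$, and you have neither: you only know $R$ is nonprimary, and $n=6$ is allowed. Your ``key subproblem'' is therefore no easier than the statement itself. The last paragraph has the same defect: the $3\times 3$ side is nonprimary ``whenever the block is triangular'', but nothing makes it triangular, and the case check is left undone.

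\textbf{The missing idea.} The key is the rigid shape forced by $\mr(P)=2$. Since $n\ge 6$, $P$ has $n-2$ nontrivial invariant factors of total degree $n$. So at least $n-4$ of them equal $x-\lambda$ with $\lambda=\pm 1$. Replace $P$ by $-P$ if necessary; this negates $\PC_m$ and $\SC_m$, which is harmless since both signs of $\epsilon$ must be handled anyway. Then $P\sim \Idm_{n-5}\oplus M$ with $M\in\GL(5,3)$ and $\mr(M)=2$.

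Apply Lemma \ref{Schur-4} to $M$ alone. This gives $M\sim\left[\begin{smallmatrix}A&B\\C&D\end{smallmatrix}\right]$ with $R=D-CA^{-1}B$ nonprimary. Now order the basis as follows:
\begin{enumerate}
\item the two corner coordinates of $M$;
\item $m-2\le n-5$ vectors of the identity block;
\item the remaining $n-m-3$ identity vectors;
\item the last three coordinates of $M$.
\end{enumerate}
The resulting conjugate $Q$ of $P$ has $\PC_m(Q)=A\oplus\Idm_{m-2}$ and $\SC_m(Q)=\Idm_{n-m-3}\oplus R$. The latter is nonprimary for free.

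For $\epsilon=-1$ take $A\in\mathbb{S}_2$. For $\epsilon=1$ take $A\in\mathbb{X}_2$, $-\mathbb{U}_2$ or $-\Idm_2$; Lemma \ref{Schur-4} always supplies one of these. Then $A\oplus\Idm_{m-2}$ is nonprimary, because $m\ge 3$ and the characteristic polynomial of $A$ has an irreducible factor other than $x-1$. This is also why $\mathbb{U}_2$ must be avoided: $\mathbb{U}_2\oplus\Idm_{m-2}$ is unipotent. This is the paper's argument, and it needs no control of off-diagonal blocks at all.
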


\begin{proof}
We may assume that $P = \Idm_{n-5} \oplus M$, where $\mr(M) = 2$. It follows from lemma \ref{Schur-4} that 
$P$ is  similar to a matrices $P_1$ and $P_2$, where $\PC_m(P_1) =-\Idm_1 \oplus \Idm_{m-1}$, $\PC_m(P_2) \in -\mathbb{U}_2 \oplus \Idm_{m-2} \cup \mathbb{X}_2 \oplus \Idm_{m-2}$, and  $\SC_m(P_1)$ and $\SC_2(P_2)$ are nonprimary. 
\end{proof}

It follows from remark \ref{Schur-3} and theorem \ref{BN-2} that the matrices in $\GL(n,3)$ are good.


\subsection{The case $K=\GF(2)$}
\mbox{}

First we consider the case $m = 2$.

Let  
$\mathbb{X}_2, \mathbb{U}_2$ be the conjugacy classes of $\GL(2,2)$ with $\mu(\mathbb{X}_2) = x^2+x+1, \mu(\mathbb{U}_2) = x^2+1$. Let $\Jordan_k(1) \in \GL(k,2)$ denote  the upper Jordan matrix with minimal polynomial $(x+1)^k$. Let $\partial_k(P) = \partial i_k(P)$ denote the degree of the $k$th invariant factor of $P$.

\begin{remark} \label{Schur-6}
	\begin{enumerate}
		\item[]
		\item $\mathbb{X}_2 \mathbb{X}_2 = \Idm_2 \cup \mathbb{X}_2$. 
		\item $\mathbb{U}_2 \mathbb{U}_2 =  \Idm_2 \cup \mathbb{X}_2$.
		\item $\mathbb{X}_2 \mathbb{U}_2 = \mathbb{U}_2$.
	\end{enumerate}
\end{remark}

For the following lemma,  we need the full strength of the interlacing theorem of Thompson and de S\'a.
\begin{lemma} \label{Schur-6a}
	Let $M \in \GL(n, 2)$. Assume that $\partial_n(M) + \partial_{n-1}(M) \ge 6$. Then
	$M$ is similar to matrices $M_1, M_2$, where
	\begin{enumerate}
		\item $\PC_2(M_1 ) \in \mathbb{U}_2$, $\SC_2(M_1)$ is nonprimary,
		\item $\PC_2(M_2) \in \mathbb{X} \cup \Idm_2$, $\SC_2(M_2)$ is nonprimary.
	\end{enumerate}
\end{lemma}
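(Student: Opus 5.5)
We want $M$ similar to $Q$ with $\PC_2(Q)$ in a prescribed class ($\mathbb{U}_2$, or $\mathbb{X}_2\cup\{\Idm_2\}$) and $\SC_2(Q)$ nonprimary. I would work directly with the interlacing theorem (Lemma \ref{INTERLAC}) rather than with the explicit matrices $M_S$ of Lemma \ref{Schur-1b}, since over $\GF(2)$ the freedom in $S$ is too small.

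\emph{Reduction to invariant factors.} By Lemma \ref{Schur-0}(1), $M\sim Q$ with $\PC_2(Q)=A$ and $\SC_2(Q)=S$ is equivalent to $M\sim M'$ with $\PC_{n-2}(M')=S$ and $\SC_{n-2}(M')=A$. So first I prescribe the $(n-2)$-corner $S$ using Lemma \ref{INTERLAC} with $q=2$. The conditions become $i_k(M)\mid i_k(S)\mid i_{k+4}(M)$. Choosing a nonprimary $S$ is then a purely arithmetic problem on invariant factors. For the top factor we need $i_{n-2}(M)\mid i_{n-2}(S)$, but $i_{n-2}(S)$ itself is unconstrained from above. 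We need $S\in\GL(n-2,2)$, so no factor $x$ may appear, and we need $\det$ compatibility, which is automatic over $\GF(2)$.

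\emph{Choosing $S$.} The plan is to take the invariant factors of $S$ to be $i_k(S)=i_k(M)$ for $k\le n-4$. The last two factors, $i_{n-3}(S)$ and $i_{n-2}(S)$, are chosen of degrees summing to $\partial_{n-1}(M)+\partial_n(M)-4\ge 2$, subject to $i_{n-4}(M)\mid i_{n-3}(S)\mid i_{n-2}(S)$. We then multiply in a factor coprime to $x+1$, or to the primary polynomial of $M$, so that $i_{n-2}(S)$ has two distinct irreducible divisors. The hypothesis $\partial_n+\partial_{n-1}\ge 6$ guarantees enough degree for this: a factor $x^2+x+1$ (degree 2) together with a factor $x+1$ or another irreducible is available even when $M$ is primary with small top factors. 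Lemma \ref{INTERLAC}'s inequalities hold by construction.

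\emph{Controlling the corner.} The remaining step is to show that, after fixing $S$, the $2\times 2$ Schur complement $A$ can be moved into the desired class. Conjugating by $\Idm_{n-2}\oplus Y$ changes $A$ only within its class (Lemma \ref{Schur-0}(2)). So I need a second degree-count showing that the completions of $S$ to a matrix similar to $M$ realize $2\times 2$ Schur complements in both classes. I would argue via determinants of the characteristic matrix: $\det(xI-M)$ relates to $\det(xI-S)$ and a $2\times2$ rational complement. Enough freedom in the off-diagonal blocks then lets $\operatorname{tr}A$ take both values $0$ and $1$, with $\det A=1$ forced. Trace $0$ with $A\ne \Idm_2$ gives $\mathbb{U}_2$, and trace $1$ gives $\mathbb{X}_2$. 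Trace $0$ might yield $A=\Idm_2$; that is why case (2) allows $\Idm_2$, while for case (1) we must exclude it by using the extra degree.

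I expect this last step to be the main obstacle: simultaneously keeping $S$ nonprimary while forcing the trace of the complementary corner. If the direct argument fails, I would fall back on a case split over the primary type of $M$ (unipotent versus $x^2+x+1$-primary versus higher irreducibles), checking small configurations with Remark \ref{Schur-6}.
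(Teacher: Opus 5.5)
Your reformulation through Lemma~\ref{Schur-0}(1) is fine, and choosing a nonprimary $(n-2)$-corner $S$ by interlacing is indeed the easy part. Your count is off, though. With $i_k(S)=i_k(M)$ for $k\le n-4$, the remaining degree is $\partial_{n-3}(M)+\partial_{n-2}(M)+\partial_{n-1}(M)+\partial_n(M)-2$, not $\partial_{n-1}+\partial_n-4$. You also need the lower bounds $i_{n-3}(M)\mid i_{n-3}(S)$ and $i_{n-2}(M)\mid i_{n-2}(S)$.

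The genuine gap is the step you flag yourself: controlling the class of the $2\times2$ block $A$. Lemma~\ref{INTERLAC} only asserts that \emph{some} $M'=\left[\begin{smallmatrix} S & B'\\ C' & D'\end{smallmatrix}\right]\sim M$ exists. It says nothing about the set of all such completions, so ``enough freedom in the off-diagonal blocks'' has no support. The identity $\det(xI-M')=\det(xI-S)\det\bigl(xI-D'-C'(xI-S)^{-1}B'\bigr)$ involves only characteristic polynomials. It yields $\det A=1$, which is automatic in $\GL(2,2)$, and says nothing about $\operatorname{tr}A$. The real constraint $M'\sim M$ (all invariant factors, not just $\chi_M$) is never used. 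Note also that $A^{-1}$ is the trailing $2\times2$ block of $M'^{-1}$. So what you need is a \emph{simultaneous} prescription of a corner of $M'$ and the complementary corner of $M'^{-1}$ within one similarity class, and interlacing does not provide that. Excluding $A=\Idm_2$ in case (1) ``by using the extra degree'' is likewise unargued, and the fallback case split is not carried out. As it stands, the proposal proves only the easy half.

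The missing idea is to reverse the roles. Prescribe the $2\times2$ corner, and track only the characteristic polynomial of the Schur complement, since nonprimarity depends on nothing else. This is exactly what the family $M_S$ of Lemma~\ref{Schur-1b} does, which you dismissed. For fixed $B,C,D$, every $S\in\GL(2,2)$ occurs as $\PC_2(M_S)$. Moreover $\SC_2(M_S)=\left[\begin{smallmatrix} B & 0\\ D & -CS^{-1}\end{smallmatrix}\right]$ has characteristic polynomial $\chi_B\,\chi_{CS^{-1}}$.

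The paper proceeds in three steps:
\begin{enumerate}
\item It reduces to $\mr(M)\ge n-2$. Write $M\sim M_0\oplus R$ with $M_0$ carrying $i_{n-1}(M),i_n(M)$; then $R$ is simply appended to the Schur complement, which preserves nonprimarity. After this reduction $n\ge 6$.
\item It uses interlacing to make $B$ irreducible of degree $n-4$. For $n\ge7$ any $S$ then works, since $\deg\chi_B\ge3$.
\item For $n=6$ one has $B\in\mathbb{X}_2$, and it suffices that $CS^{-1}$ be unipotent. Choose $S$ by the class of $C$ (first choice gives $M_1$, second gives $M_2$): $S=U\in\mathbb{U}_2$ resp.\ $S=\Idm_2$ if $C=\Idm_2$; $S=C$ resp.\ $S=\Idm_2$ if $C\in\mathbb{U}_2$; $S=U$ resp.\ $S=C$ if $C\in\mathbb{X}_2$. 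The last case uses $\mathbb{X}_2\mathbb{U}_2=\mathbb{U}_2$ from Remark~\ref{Schur-6}.
\end{enumerate}
So the freedom that matters lies in $B$, not in $S$, and the small freedom in $S$ suffices once $B$ is irreducible.
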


\begin{proof}
	We may assume that $\mr(M) \ge n-2$. As before, $M$ is similar to matrices  
	\[
	M_S =	\left[ \begin{array}{ccc} \Idm_m  & 0 & 0\\ 0 &  \Idm_k & 0 \\ -S & 0 & \Idm_m \end{array}\right]
	\left[ \begin{array}{ccc} 0  & 0 & \Idm_m\\ 0 &  B & X \\ C & D & Y \end{array}
	\right]
	\left[ \begin{array}{ccc} \Idm_m  & 0 & 0\\ 0 &  \Idm_k & 0 \\ S & 0 & \Idm_m \end{array}
	\right]
	\]
	\[
	= \left[ \begin{array}{ccc} S  & 0 & \Idm_m\\ XS &  B & X \\ C + (Y-S)S & D & Y-CS \end{array}\right]
	\]
	with $\PC_m(M_S) = S \in \GL(m, K)$ and 
	\[
	\SC_m(M_S) =	\left[ \begin{array}{cc} B  & 0\\ D &  -CS^{-1}  \end{array}
	\right].
	\]
	
	By the interlacing theorem \ref{INTERLAC}, we may assume that $B$ is irreducible.
	
	The case $n \ge 7$ is trivial. So let $n = 6$.  Let $U \in  \mathbb{U}_2$. 
	\begin{enumerate}
		\item If $C = \Idm_2$, then $\SC_2(M_U) \sim B \oplus U$ and $\SC_2(P_I) \sim B \oplus \Idm_2$.
		\item If $C  \sim U$, then $\SC_2(M_C) \sim B \oplus \Idm_2$ and $\SC_2(P_I) \sim B \oplus \mathbb{U}_2$.
		\item If $C  \sim B$, then $\SC_2(M_C) \sim  B \oplus \Idm_2$ and $\SC_2(P_U) \sim  B \oplus \mathbb{U}_2$.
	\end{enumerate}
\end{proof}

\begin{lemma} \label{Schur-6b}
	Let $P  \in \mathbb{X}_2 \oplus \mathbb{X}_2$. Then $P$ is similar to a matrices $P_1$ with $\PC_2(P_1) = \SC_2(P_1) = \Idm_2$ and $P_2$ with $\PC_2(P_2),  \SC_2(P_2) \in \mathbb{U}_2$.
\end{lemma}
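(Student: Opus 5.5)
The plan is to write down explicit $4\times 4$ block matrices over $\GF(2)$ that satisfy the required conditions and lie in the similarity class of $P$. The key observation is that a matrix $Q\in\M(4,2)$ lies in $\mathbb{X}_2\oplus\mathbb{X}_2$ if and only if $Q^2+Q+\Idm_4=0$. Indeed, $x^2+x+1$ is irreducible over $\GF(2)$, so every invariant factor of such a $Q$ must equal $x^2+x+1$, and by rational canonical form $Q$ is then similar to $P$. So I will not conjugate $P$ directly. Instead I will prescribe $\PC_2$ and $\SC_2$ and solve the four block equations
\[
A^2+A+\Idm_2+BC=0,\quad AB+BD+B=0,\quad CA+DC+C=0,\quad CB+D^2+D+\Idm_2=0
\]
for $Q=\left[\begin{array}{cc} A & B\\ C & D\end{array}\right]$.

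For $P_1$, put $A=\Idm_2$. The first equation then forces $BC=\Idm_2$, so $B$ is invertible. The second equation becomes $BD=0$, hence $D=0$. Take $B=C=\Idm_2$, that is,
\[
Q_1=\left[\begin{array}{cc} \Idm_2 & \Idm_2\\ \Idm_2 & 0\end{array}\right].
\]
The remaining two equations hold trivially. In characteristic $2$ we get $\SC_2(Q_1)=0-\Idm_2\Idm_2^{-1}\Idm_2=\Idm_2$, so $Q_1$ serves as $P_1$.

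For $P_2$, put $A=U=\left[\begin{array}{cc}1&1\\0&1\end{array}\right]\in\mathbb{U}_2$, so that $U^2=\Idm_2$. The first equation then reads $U+\Idm_2+\Idm_2+BC=0$, i.e. $BC=U$. I choose $B=\Idm_2$ and $C=U$. The second equation gives $U+D+\Idm_2=0$, so $D=U+\Idm_2$, which is nilpotent with $D^2=0$. Then I will check the remaining two equations:
\[
CA+DC+C=U^2+(U+\Idm_2)U+U=\Idm_2+\Idm_2+U+U=0,
\]
\[
CB+D^2+D+\Idm_2=U+0+U+\Idm_2+\Idm_2=0.
\]
Finally, $\SC_2(Q_2)=D-CU^{-1}B=U+\Idm_2+\Idm_2=U\in\mathbb{U}_2$, so $Q_2$ serves as $P_2$.

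I do not expect a real obstacle. The only conceptual point is the reduction to the quadratic matrix equation $Q^2+Q+\Idm_4=0$, which replaces any direct search for a conjugating matrix. After that, everything is a finite check over $\GF(2)$. The steps most prone to slips are the sign-free arithmetic in characteristic $2$ and keeping the Schur complement formula $D-CA^{-1}B$ in the correct order.
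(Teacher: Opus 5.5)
Your proof is correct and takes the same approach as the paper: the paper's proof consists of exactly your two matrices, $P_1=\left[\begin{array}{cc} \Idm_2 & \Idm_2\\ \Idm_2 & 0\end{array}\right]$ and $P_2=\left[\begin{array}{cc} U & \Idm_2\\ U & U+\Idm_2\end{array}\right]$, stated without verification. Your reduction of membership in $\mathbb{X}_2\oplus\mathbb{X}_2$ to the equation $Q^2+Q+\Idm_4=0$, together with the block checks, supplies the verification that the paper leaves to the reader.
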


\begin{proof}
	Let $U \in  \mathbb{U}_2$. Let
	\[
	P_1 =	\left[ \begin{array}{cc} \Idm_2 & \Idm_2   \\ \Idm_2 &  0 \end{array} \right],
	P_2 =	\left[ \begin{array}{cc}  U  &  \Idm_2 \\ U &  U + \Idm_2 \end{array} \right].
	\]
\end{proof}

\begin{corollary} 
	Let $n \ge 6$.  Then all transformations of $\GL(n, 2)$ with minimal polynomial
	$x^2+x+1$ are good.
\end{corollary}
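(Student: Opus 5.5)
We have to show that a matrix $P$ with minimal polynomial $x^2+x+1$ in $\GL(n,2)$, $n \ge 6$, is good, for the relevant $m$ with $2 \le m \le n-2$. Such a $P$ is similar to $\mathbb{X}_2^{\oplus n/2}$, which forces $n$ to be even. So $P$ is a direct sum of $t = n/2 \ge 3$ copies of the companion matrix $X$ of $x^2+x+1$.

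The plan is to choose the similarity so that the blocks line up with $\Omega_1\Psi_1$ and $\Omega_2\Psi_2$. For that I will build $Q \sim P$ in block diagonal form, $Q = Q' \oplus Q''$ with $Q' \in \GL(m,2)$. Then $\PC_m(Q) = \PC_m(Q')$ and $\SC_m(Q) = \SC_m(Q') \oplus Q''$ when $Q'$ is larger than $m$, or similar decompositions otherwise. Schur complements behave additively on direct sums whose principal corner contains the first summand, by Lemma \ref{Schur-0}.

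The key steps are as follows.

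\textbf{Case $m$ even.} Take $Q = P_{(m)} \oplus P_{(n-m)}$ with $P_{(k)} \in \mathbb{X}_2^{\oplus k/2}$. Then $\PC_m(Q) = P_{(m)}$ and $\SC_m(Q) = P_{(n-m)}$, both with minimal polynomial $x^2+x+1$. These two pieces need not lie in $\Omega_1\Psi_1$ and $\Omega_2\Psi_2$: if $m \ge 4$ they could be handled by induction (they again have minimal polynomial $x^2+x+1$), but the base piece of size 2 is irreducible and primary, so it is not automatically in the product. Hence I will instead use Lemma \ref{Schur-6b} on a pair of $\mathbb{X}_2$ blocks straddling the cut. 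Write $P \sim \mathbb{X}_2^{\oplus (m-2)/2} \oplus (\mathbb{X}_2 \oplus \mathbb{X}_2) \oplus \mathbb{X}_2^{\oplus (n-m-2)/2}$, and replace the middle $4 \times 4$ block by $P_1$ or $P_2$ of Lemma \ref{Schur-6b}. Conjugating by a permutation puts the first $2\times 2$ corner of that block into the first $m$ coordinates.

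The result is $\PC_m(Q) \sim \mathbb{X}_2^{\oplus(m-2)/2} \oplus Z$ and $\SC_m(Q) \sim Z' \oplus \mathbb{X}_2^{\oplus(n-m-2)/2}$, where $Z, Z' \in \{\Idm_2\}$ or $Z,Z' \in \mathbb{U}_2$. When $m \ge 4$ this makes $\PC_m(Q)$ nonprimary, since it combines $\mathbb{X}_2$ with the eigenvalue $1$. Likewise $\SC_m(Q)$ is nonprimary when $n-m \ge 4$, and then Theorem \ref{BN-2} gives goodness directly.

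\textbf{Case $m$ odd.} Here I would split off a single coordinate across the cut instead. Use Lemma \ref{Schur-6a} or a $3\times 3$ analogue: a $\mathbb{X}_2 \oplus \mathbb{X}_2$ block can be conjugated so that its $1$- or $3$-dimensional corner and Schur complement contain the eigenvalue $1$. This makes both $\PC_m$ and $\SC_m$ nonprimary, and Theorem \ref{BN-2} then finishes; determinants are automatically $1$ over $\GF(2)$.

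The main obstacle is the boundary situation $m = 2$ or $n - m = 2$, where one of the two pieces is only $2 \times 2$ and Theorem \ref{BN-2} is useless. There we must match the $2\times 2$ piece exactly with $\Omega_1\Psi_1$ using Remark \ref{Schur-6}. By that remark, the product of two cyclic classes of $\GL(2,2)$ contains $\Idm_2$ or $\mathbb{U}_2$ (the only alternatives being $\mathbb{X}_2$ or $\mathbb{U}_2$), and cyclic classes there are $\mathbb{X}_2$, $\mathbb{U}_2$, or classes with eigenvalues $\{1,1\}$ nondiagonalizable (which is $\mathbb{U}_2$). So it suffices to realize both options: $\PC_2 = \Idm_2$ via $P_1$ and $\PC_2 \in \mathbb{U}_2$ via $P_2$ of Lemma \ref{Schur-6b}. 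The leftover $\SC_2 \sim Z' \oplus \mathbb{X}_2^{\oplus (n-4)/2}$ is nonprimary because $n \ge 6$. I expect the fiddly part to be checking the possible products in Remark \ref{Schur-6} against these two choices, in particular when $\Omega_1\Psi_1 \subseteq \Idm_2 \cup \mathbb{X}_2$ versus $\mathbb{U}_2$.
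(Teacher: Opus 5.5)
Your argument for the decisive case $m=2$ (and symmetrically $n-m=2$) is the paper's own: apply Lemma \ref{Schur-6b} to one $\mathbb{X}_2\oplus\mathbb{X}_2$ block, pad with the remaining $(n-4)/2$ copies of $\mathbb{X}_2$ so that the Schur complement (resp.\ corner) is nonprimary, which is where $n\ge 6$ is needed, and conclude with Remark \ref{Schur-6} and Theorem \ref{BN-2}. For $m\ge 3$ the paper needs nothing new, since $\mr(P)=n/2\ge 3$ and Lemma \ref{Schur-2} applies; your odd-$m$ sketch wrongly cites Lemma \ref{Schur-6a}, whose hypothesis $\partial_n+\partial_{n-1}\ge 6$ fails here, but the claim is easy to verify directly, e.g.\ by placing $\bigl[\begin{smallmatrix}1&1\\1&0\end{smallmatrix}\bigr]\in\mathbb{X}_2$ across the cut, which gives $\PC_m\sim \mathbb{X}_2^{\oplus(m-1)/2}\oplus\Idm_1$ and $\SC_m\sim\Idm_1\oplus\mathbb{X}_2^{\oplus(n-m-1)/2}$.
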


So for the rest of the proof  of theorem \ref{main-1}, we are left with unipotent matrices.


\subsubsection{The case $K=\GF(2), n \in {4,5}$}

\begin{lemma} \label{Schur-7}
	Let $P \in \GL(4,2)$ be unipotent with  $\mr(P) = 2$. Then $P$ is
	similar to a matrix $Q$ with $\PC_2(Q), \SC_2(Q) \in \mathbb{U}_2$.
\end{lemma}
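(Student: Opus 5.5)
The plan is to prove Lemma \ref{Schur-7} by explicit construction. Over $\GF(2)$ the class $\mathbb{U}_2$ is the class of the Jordan block $\Jordan_2(1)$. A unipotent $P \in \GL(4,2)$ with $\mr(P)=2$ has exactly two nontrivial invariant factors, whose degrees sum to $4$. So $P$ is similar either to $\Jordan_2(1)\oplus\Jordan_2(1)$ or to $\Jordan_1(1)\oplus\Jordan_3(1)$, and I treat these two cases separately. In each case I write down
\[
Q=\left[\begin{array}{cc} U & \Idm_2\\ C & S+CU^{-1}\end{array}\right], \qquad U,S\in\mathbb{U}_2,
\]
for which $\PC_2(Q)=U$ and $\SC_2(Q)=S$ by construction. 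It then remains to choose $U,S,C$ so that $Q$ is similar to $P$. Since $Q$ is unipotent whenever it is similar to $P$, the similarity type is detected by $\rank(Q-\Idm_4)$ (which must equal $2$) and by whether $(Q-\Idm_4)^2$ vanishes.

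\emph{Rank computation.} Write $N=U-\Idm_2$. Pivoting on the identity block in the upper right corner of $Q-\Idm_4$ gives
\[
\rank(Q-\Idm_4)=2+\rank\bigl(C-(S+\Idm_2+CU^{-1})N\bigr).
\]
So the requirement $\mr=2$ becomes the single matrix equation $C=(S+\Idm_2+CU^{-1})N$ over $\GF(2)$.

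\emph{Case $\Jordan_2(1)\oplus\Jordan_2(1)$.} Take $S=U=\left[\begin{array}{cc}1&1\\0&1\end{array}\right]$. Then $N^2=0$ and $U^{-1}N=N$, so the equation becomes $C=CN$, whose only solution is $C=0$. This gives
\[
Q=\left[\begin{array}{cc}U&\Idm_2\\0&U\end{array}\right], \qquad (Q-\Idm_4)^2=\left[\begin{array}{cc}0&2N\\0&0\end{array}\right]=0
\]
in characteristic $2$. Together with $\rank(Q-\Idm_4)=2$, this shows $Q\sim \Jordan_2(1)\oplus\Jordan_2(1)$.

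\emph{Case $\Jordan_1(1)\oplus\Jordan_3(1)$.} Keep $U$ as above but take $S$ to be a different element of $\mathbb{U}_2$, say $S=U^{T}$ or $S=\left[\begin{array}{cc}0&1\\1&0\end{array}\right]$. I then solve the same rank equation for $C$. Since $\GL(2,2)$ has only $6$ elements and there are only $16$ candidates for $C$, this is a finite search. For a solution, the remaining check is that $(Q-\Idm_4)^2\neq 0$, which forces the Jordan type $1+3$.

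The main obstacle is this second case: one must find a solution $C$ of the rank equation that also makes $(Q-\Idm_4)^2$ nonzero. If no choice of $S$ works with the top-right block equal to $\Idm_2$, I would switch to a top-right block of rank $1$. The Schur complement then has to be recomputed directly, but the same two invariants (rank and square of $Q-\Idm_4$) decide the class. Alternatively, I would start from $\Jordan_1(1)\oplus\Jordan_3(1)$ and conjugate by a suitable elementary matrix, using Lemma \ref{Schur-0} to track how $\PC_2$ and $\SC_2$ change.
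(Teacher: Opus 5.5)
Your Case $\Jordan_2(1)\oplus\Jordan_2(1)$ is correct. The case $\Jordan_1(1)\oplus\Jordan_3(1)$ has a genuine gap: the ansatz with top-right block $\Idm_2$ cannot produce this class, and your success test is flawed.

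The test is circular. $\rank(Q-\Idm_4)=2$ together with $(Q-\Idm_4)^2\neq 0$ forces type $1+3$ only if you already know that $Q$ is unipotent, and $\Idm_2\oplus X$ with $X\in\mathbb{X}_2$ also satisfies both conditions.

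Now solve your own equation with $U=\Jordan_2(1)$. Every $2\times 2$ matrix of the form $WN$ has zero first column, so $C$ has zero first column. Hence $CU^{-1}N=C(\Idm_2+N)N=CN=0$, and the unique solution is $C=(S+\Idm_2)N$. Consequently $\operatorname{tr}Q=\operatorname{tr}U+\operatorname{tr}S+\operatorname{tr}(CU^{-1})=\operatorname{tr}C=s_{21}$.
\begin{itemize}
\item If $s_{21}=0$, then $S=U$ and $C=0$, which is your type $2+2$ matrix again.
\item If $s_{21}=1$, as for both of your suggested choices $S=U^{T}$ and $S=\left[\begin{array}{cc}0&1\\1&0\end{array}\right]$, then $\operatorname{tr}Q=1$. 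So $Q$ has characteristic polynomial $(x+1)^2(x^2+x+1)$ and $Q\sim\Idm_2\oplus X$. These are exactly the false positives your test would accept.
\end{itemize}
Any invertible top-right block $B$ can be turned into $\Idm_2$ by $Q\mapsto(\Idm_2\oplus B)\,Q\,(\Idm_2\oplus B^{-1})$. This map, like conjugation by $Y\oplus Y$, keeps $\PC_2$ and $\SC_2$ in $\mathbb{U}_2$ (Lemma \ref{Schur-0}). So no matrix with an invertible top-right block works at all. Your finite search therefore comes back empty, and the fallbacks you mention are left unexecuted.

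The repair is to take $C=0$ and put the freedom into a singular top-right block. This is the paper's argument: $P$ is similar to
\[
Q=\left[\begin{array}{cc}\Jordan_2(1)&B\\0&\Jordan_2(1)\end{array}\right]
\]
for a suitable $B$. Here $\PC_2(Q)=\SC_2(Q)=\Jordan_2(1)$ without computation. Also $Q$ is unipotent because it is block triangular, so your two invariants now legitimately decide the class:
\begin{itemize}
\item $\rank(Q-\Idm_4)=2$ if and only if $b_{21}=0$;
\item in that case the only nonzero block of $(Q-\Idm_4)^2$ is the top-right block $NB+BN$, which is nonzero if and only if $b_{11}+b_{22}=1$.
\end{itemize}
Thus $B=0$ (or $B=\Idm_2$, your Case 1) gives $\Jordan_2(1)\oplus\Jordan_2(1)$, and $B=\left[\begin{array}{cc}1&0\\0&0\end{array}\right]$ gives $\Jordan_1(1)\oplus\Jordan_3(1)$.
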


\begin{proof}
	$P$ is similar to a matrix
	\[
	\left[ \begin{array}{cc}  \Jordan_2(1)  & B \\ 0 &\Jordan_2(1)  \end{array} \right].
	\]
\end{proof}

\begin{corollary} \label{Schur-8}
	Let $n = 4$. Then all primary matrices of $\GL(4,2)$ with minimal rank 2 are good.
\end{corollary}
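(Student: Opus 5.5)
The plan is a direct case analysis, since everything lives in $\GL(2,2)$. With $n=4$ the only admissible splitting is $m=2$, because $m=1,3$ is already covered by theorem \ref{BN}. The cyclic classes of $\GL(2,2)$ are exactly $\mathbb{X}_2$ and $\mathbb{U}_2$, and every determinant equals $1$, so determinants play no role. By remark \ref{Schur-6}, $\Omega_i\Psi_i = \mathbb{U}_2$ when $\{\Omega_i,\Psi_i\}=\{\mathbb{X}_2,\mathbb{U}_2\}$. In both cases $\Omega_i=\Psi_i=\mathbb{X}_2$ and $\Omega_i=\Psi_i=\mathbb{U}_2$, the product contains $\Idm_2$ and $\mathbb{X}_2$. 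So a matrix $P$ is good as soon as, for every pattern $(\tau_1,\tau_2)$ with $\tau_i \in \{\mathbb{U}_2, \Idm_2\cup\mathbb{X}_2\}$, $P$ is similar to some $Q$ with $\PC_2(Q)\in\tau_1$ and $\SC_2(Q)\in\tau_2$. Lemma \ref{Schur-0}(1), applied with $m=n-m=2$, interchanges the principal corner and the Schur complement. Hence only three unordered patterns have to be realized: $(\mathbb{U}_2,\mathbb{U}_2)$, $(\Idm_2\cup\mathbb{X}_2,\Idm_2\cup\mathbb{X}_2)$, and the mixed pattern $(\mathbb{U}_2,\Idm_2\cup\mathbb{X}_2)$.

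Next I list the primary matrices of $\GL(4,2)$ with $\mr=2$. Such a matrix has invariant factors $1,1,f,g$ with $f\mid g$ and $\partial f+\partial g=4$, and $f,g$ are powers of one irreducible polynomial. This leaves three classes: $x+1,(x+1)^3$; $(x+1)^2,(x+1)^2$; and $x^2+x+1,x^2+x+1$, the last being $\mathbb{X}_2\oplus\mathbb{X}_2$. For $\mathbb{X}_2\oplus\mathbb{X}_2$, lemma \ref{Schur-6b} already supplies $(\Idm_2,\Idm_2)$ and $(\mathbb{U}_2,\mathbb{U}_2)$. For the two unipotent classes, lemma \ref{Schur-7} supplies $(\mathbb{U}_2,\mathbb{U}_2)$.

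What remains is to produce, for each class, explicit $4\times 4$ representatives with the missing patterns. I would use the ansatz of lemma \ref{Schur-1}, taking $Q=\left[\begin{array}{cc} A & B\\ C & D\end{array}\right]$ with $A$ chosen in the desired class and $\SC_2(Q)=D-CA^{-1}B$.
\begin{enumerate}
\item For $\mathbb{X}_2\oplus\mathbb{X}_2$ and the mixed pattern, try $Q=\left[\begin{array}{cc} U & \Idm_2\\ C & D\end{array}\right]$ with $U\in\mathbb{U}_2$. Choose $C,D$ such that $D-CU^{-1}=\Idm_2$ or lies in $\mathbb{X}_2$, and such that $Q^2+Q+\Idm_4=0$. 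This reduces to a small system for $C$ over $\GF(2)$.
\item For the unipotent classes with pattern $(\Idm_2,\Idm_2)$, take $Q=\left[\begin{array}{cc}\Idm_2 & B\\ C & D\end{array}\right]$ with $D-CB=\Idm_2$, that is, $D=\Idm_2+CB$. Choosing $B=\Idm_2$ gives $Q-\Idm_4=\left[\begin{array}{cc}0 & \Idm_2\\ C & C\end{array}\right]$. I would then pick $C$ so that $Q-\Idm_4$ is nilpotent with rank $2$ and the required Jordan type, checking $(Q-\Idm_4)^2$ and $(Q-\Idm_4)^3$.
\item For the unipotent classes with the mixed pattern, use the same shape with $A=U$ and solve analogously.
\end{enumerate}
Each of these steps is a finite computation with $2\times2$ blocks.

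I expect the main obstacle to be the mixed pattern for the unipotent class with invariant factors $x+1,(x+1)^3$. There the Jordan structure is rigid, and it is not clear a priori that a principal $2\times2$ corner in $\mathbb{U}_2$ can coexist with a Schur complement equal to $\Idm_2$ or in $\mathbb{X}_2$. If a direct ansatz fails, I would fall back on the interlacing theorem \ref{INTERLAC} to confirm which principal corners are possible, namely those satisfying $i_k(Q)\mid i_k(A)\mid i_{k+4}(Q)$. I would then parametrize all completions with that corner and search the $2^4$ choices of the off-diagonal block for one with the right Schur complement. Failing that, I would use that $\Omega\Psi$ is a union of conjugacy classes closed under $AB\mapsto BA$, so that the roles of $(\Omega_1,\Psi_1)$ and $(\Omega_2,\Psi_2)$ may be interchanged.
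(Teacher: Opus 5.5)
\textbf{Step 1 cannot be carried out.} Your reduction asks for more than the corollary needs, and part of what it asks for is false. Suppose $Q=\left[\begin{array}{cc} A & B\\ C & D\end{array}\right]\in\mathbb{X}_2\oplus\mathbb{X}_2$ with $A\in\mathbb{U}_2$.
\begin{enumerate}
\item Since $Q^2+Q+\Idm_4=0$, we have $Q^{-1}=Q+\Idm_4$. The lower right block of $Q^{-1}$ is $\SC_2(Q)^{-1}$, so $\SC_2(Q)^{-1}=D+\Idm_2$.
\item Over $\GF(2)$, $\mathrm{tr}\,Q=0=\mathrm{tr}\,A$, so $\mathrm{tr}(D+\Idm_2)=0$. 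Every element of $\mathbb{X}_2$ has trace $1$, and $\Idm_2\cup\mathbb{X}_2$ is closed under inversion.
\item Hence $\SC_2(Q)\in\Idm_2\cup\mathbb{X}_2$ forces $\SC_2(Q)=\Idm_2$, i.e.\ $D=0$.
\item Then $B$ is invertible, and the upper right block of $Q^2+Q+\Idm_4=0$ reads $(A+\Idm_2)B=0$. So $A=\Idm_2$, a contradiction.
\end{enumerate}
Your ansatz $B=\Idm_2$ shows this concretely: it forces $C=U$ and $D=U+\Idm_2$, and then $\SC_2(Q)=U$. That is just $P_2$ of Lemma \ref{Schur-6b}.

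None of your fallbacks helps. Interlacing only controls the corner. Interchanging the roles of $(\Omega_1,\Psi_1)$ and $(\Omega_2,\Psi_2)$ merely produces the reversed mixed pattern, which by Lemma \ref{Schur-0} is equally impossible. The real obstruction is here, not in the class $x+1,(x+1)^3$ you flagged; that class does admit the mixed pattern.

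\textbf{The missing observation: the mixed pattern never occurs when $n=4$.} Since $\Omega_1\oplus\Omega_2$ is cyclic, $\Omega_1$ and $\Omega_2$ have coprime minimal polynomials. So $\{\Omega_1,\Omega_2\}=\{\mathbb{X}_2,\mathbb{U}_2\}$, and likewise $\{\Psi_1,\Psi_2\}=\{\mathbb{X}_2,\mathbb{U}_2\}$. Thus $\Omega_1\Psi_1=\mathbb{U}_2$ exactly when $\Omega_2\Psi_2=\mathbb{U}_2$, and only the patterns $(\mathbb{U}_2,\mathbb{U}_2)$ and $(\Idm_2\cup\mathbb{X}_2,\Idm_2\cup\mathbb{X}_2)$ are relevant.

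In fact $\Omega=\Psi$ is the unique $(2,2)$-cyclic class of $\GL(4,2)$, with minimal polynomial $(x+1)^2(x^2+x+1)$. So you may split it as $\Omega_1=\Psi_2=\mathbb{X}_2$ and $\Omega_2=\Psi_1=\mathbb{U}_2$. Then Lemma \ref{Schur-7}, the matrix $P_2$ of Lemma \ref{Schur-6b}, and $\mathbb{X}_2\mathbb{U}_2=\mathbb{U}_2$ prove the corollary at once. This is the paper's argument, and you already had both lemmas in hand.

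If you prefer to keep the splitting fixed, you also need $(\Idm_2,\Idm_2)$ for the unipotent classes, i.e.\ your step 2. With $B=\Idm_2$, however, $\rank(Q-\Idm_4)=2+\rank C$. This forces $C=0$ and yields only the class $(x+1)^2,(x+1)^2$. For $x+1,(x+1)^3$ take rank-one blocks, e.g.\ $B=\left[\begin{array}{cc}1&0\\0&0\end{array}\right]$, $C=\left[\begin{array}{cc}0&0\\1&0\end{array}\right]$, $D=\Idm_2+CB$.
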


\begin{lemma} \label{Schur-9}
	Let $P \in \mathbb{U}_2 \oplus \mathbb{U}_2$. Then $P$ is
	similar to matrices $P_1, P_2$ with
	\begin{enumerate}
		\item $\PC_2(P_1) \in \mathbb{U}_2, \SC_2(P_1) = \Idm_2$,
		\item $\PC_2(P_2), \SC_2(P_2) \in \mathbb{X}_2$.
	\end{enumerate}
\end{lemma}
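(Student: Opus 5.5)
The plan is to argue as in the proof of Lemma~\ref{Schur-6b}: write down explicit $4\times 4$ matrices over $\GF(2)$ and check them directly. Over $\GF(2)$ a matrix $Q\in\M(4,2)$ lies in $\mathbb{U}_2\oplus\mathbb{U}_2$ if and only if $Q^2=\Idm_4$ and $\rank(Q+\Idm_4)=2$. Indeed, $Q^2=\Idm_4$ means $(Q+\Idm_4)^2=0$, so every Jordan block of $Q$ has size at most $2$. The number of blocks of size $2$ is then $\rank(Q+\Idm_4)$. So it suffices to give block matrices $Q=\left[\begin{array}{cc} A & B\\ C & D\end{array}\right]$ that satisfy these two conditions and have the required principal corner $A=\PC_2(Q)$ and Schur complement $\SC_2(Q)=D-CA^{-1}B$. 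The condition $Q^2=\Idm_4$ splits into four block equations,
\[
A^2+BC=\Idm_2,\quad AB+BD=0,\quad CA+DC=0,\quad CB+D^2=\Idm_2 .
\]

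\emph{Choice of $P_1$.} Let $U=\left[\begin{array}{cc}1&1\\0&1\end{array}\right]\in\mathbb{U}_2$ and let $N=U+\Idm_2=E_{12}$, the nilpotent matrix with a single $1$ in position $(1,2)$. Take $A=U$, $B=N$ and $C=N$. Then $U^{-1}=U$ and $NUN=N^2=0$, so the first equation holds. The second equation $UN+ND=0$ becomes $N(\Idm_2+D)=0$. The third becomes $NU+DN=0$, and since $NU=N$ this says $(\Idm_2+D)N=0$. The choice $D=\Idm_2$ satisfies both, and the fourth equation $N^2+\Idm_2=\Idm_2$ holds as well. The Schur complement is then $\SC_2(Q)=\Idm_2+NUN=\Idm_2$. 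The matrix $Q+\Idm_4$ has nonzero rows $(0,1,0,1)$ and $(0,1,0,0)$ only, so its rank is $2$.

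\emph{Choice of $P_2$.} Let $X=\left[\begin{array}{cc}0&1\\1&1\end{array}\right]\in\mathbb{X}_2$, so that $X^2=X+\Idm_2$ and $X^{-1}=X+\Idm_2$. Take
\[
P_2=\left[\begin{array}{cc} X & \Idm_2\\ X & X\end{array}\right].
\]
The block equations read $X^2+X=\Idm_2$, $X+X=0$, $X^2+X^2=0$ and $X+X^2=\Idm_2$, so $P_2^2=\Idm_4$. In $P_2+\Idm_4=\left[\begin{array}{cc}X+\Idm_2&\Idm_2\\X&X+\Idm_2\end{array}\right]$, the second block row equals $(X+\Idm_2)$ times the first block row. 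This uses $(X+\Idm_2)^2=X$. Hence $P_2+\Idm_4$ has rank $2$. The Schur complement is
\[
\SC_2(P_2)=X+X(X+\Idm_2)=X^2=X+\Idm_2\in\mathbb{X}_2,
\]
and $\PC_2(P_2)=X\in\mathbb{X}_2$.

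Both constructions therefore consist of finite block-equation checks, all completed above. The only real difficulty is finding $P_1$. The naive choice $B=\Idm_2$ forces $C=0$ and gives $\SC_2=U$ instead of $\Idm_2$. The way out is to take $B=C=N$, so that $NUN=0$ makes the Schur complement equal to $D$ and allows $D=\Idm_2$.
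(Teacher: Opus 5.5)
Your proof is correct and is the paper's proof. The paper uses exactly $P_1=\left[\begin{array}{cc} U & U+\Idm_2\\ U+\Idm_2 & \Idm_2\end{array}\right]$ (your $B=C=N=U+\Idm_2$, $D=\Idm_2$) and $P_2=\left[\begin{array}{cc} X & \Idm_2\\ X & X\end{array}\right]$, stated without verification; your membership criterion ($Q^2=\Idm_4$ and $\rank(Q+\Idm_4)=2$) and your block computations supply the checks the paper leaves to the reader.
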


\begin{proof}
	Let  $X \in \mathbb{X}_2, U \in \mathbb{U}_2$ and
	\[
	P_1 = \left[ \begin{array}{cc}   U & U + \Idm_2\\ U + \Idm_2 &  \Idm_2 \end{array} \right],
	P_2 = \left[ \begin{array}{cc} X  &  \Idm_2\\ X &  X \end{array} \right].
	\]
\end{proof}

\begin{lemma} \label{Schur-9a}
	Let $P \in \GL(4,2)$ be similar to  $\Idm_1 \oplus \Jordan_3(1)$. Then $P$ is
	similar to matrices $P_1, P_2$ with
	\begin{enumerate}
		\item $\PC_2(P_1), \SC_2(P_1) \in \mathbb{U}_2$,
		\item $\PC_2(P_2) \in \mathbb{U}_2, \SC_2(P_2) \in \mathbb{X}_2$.
	\end{enumerate}
\end{lemma}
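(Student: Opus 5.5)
Both statements can be settled by exhibiting explicit matrices, as in Lemmas \ref{Schur-6b}, \ref{Schur-7} and \ref{Schur-9}. The key fact is that a unipotent $P\in\GL(4,2)$ is similar to $\Idm_1\oplus\Jordan_3(1)$ if and only if $P-\Idm_4$ is nilpotent with $\rank(P-\Idm_4)=2$, $(P-\Idm_4)^2\ne0$ and $(P-\Idm_4)^3=0$. Indeed, these conditions force the Jordan type $(3,1)$. So in each case I write down a $2\times2$ block matrix with prescribed principal corner and Schur complement and check these rank and nilpotency conditions.

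For (1), I take a block upper triangular matrix, for which $\PC_2$ and $\SC_2$ are simply the diagonal blocks. Let $E_{ij}$ be the matrix units and $N=E_{12}$, so $\Jordan_2(1)=\Idm_2+N\in\mathbb{U}_2$. Put
\[
P_1=\left[ \begin{array}{cc} \Jordan_2(1) & E_{11}\\ 0 & \Jordan_2(1)\end{array}\right].
\]
Then $P_1-\Idm_4=\left[ \begin{array}{cc} N & E_{11}\\ 0 & N\end{array}\right]$ has rank $2$. Its square is $\left[ \begin{array}{cc} 0 & NE_{11}+E_{11}N\\ 0&0\end{array}\right]$, and $NE_{11}+E_{11}N=E_{12}\ne0$. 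Its cube vanishes. Hence $P_1\sim\Idm_1\oplus\Jordan_3(1)$, with $\PC_2(P_1)=\SC_2(P_1)=\Jordan_2(1)\in\mathbb{U}_2$.

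For (2), the Schur complement is not unipotent, so I use a full block matrix. Fix $U\in\mathbb{U}_2$ and $X\in\mathbb{X}_2$, and recall that $U^{-1}=U$ and $(U+\Idm_2)U=U+\Idm_2$ over $\GF(2)$. I force $\rank(P_2-\Idm_4)=2$ by making the bottom block row a left multiple $R$ of the top block row. Set $R=(X+\Idm_2)U$ and
\[
P_2=\left[ \begin{array}{cc} U & \Idm_2\\ R(U+\Idm_2) & R+\Idm_2\end{array}\right],
\qquad
P_2-\Idm_4=\left[ \begin{array}{c}\Idm_2\\ R\end{array}\right]\left[ \begin{array}{cc} U+\Idm_2 & \Idm_2\end{array}\right].
\]
Then $\PC_2(P_2)=U$. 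The Schur complement is
\[
\SC_2(P_2)=R+\Idm_2+R(U+\Idm_2)U=R+\Idm_2+R(U+\Idm_2)=RU+\Idm_2=X,
\]
since $RU=(X+\Idm_2)U^2=X+\Idm_2$; so $\SC_2(P_2)\in\mathbb{X}_2$. Moreover the outer-product form gives $(P_2-\Idm_4)^{k+1}=\left[ \begin{array}{c}\Idm_2\\ R\end{array}\right]W^{k}\left[ \begin{array}{cc} U+\Idm_2 & \Idm_2\end{array}\right]$ with
\[
W=(U+\Idm_2)+R=U+\Idm_2+XU+U=\Idm_2+XU.
\]

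The main step is to see that $W\ne0$ and $W^2=0$. This follows from remark \ref{Schur-6}(3): $\mathbb{X}_2\mathbb{U}_2=\mathbb{U}_2$, so $XU$ is a nonidentity unipotent $2\times2$ matrix, whence $W=XU+\Idm_2$ is nonzero with square zero. Since the outer factors $\left[ \begin{array}{c}\Idm_2\\ R\end{array}\right]$ and $\left[ \begin{array}{cc} U+\Idm_2 & \Idm_2\end{array}\right]$ have full rank $2$, we get $\rank(P_2-\Idm_4)=2$, $(P_2-\Idm_4)^2\ne0$ and $(P_2-\Idm_4)^3=0$. Hence $P_2\sim\Idm_1\oplus\Jordan_3(1)$, which completes the lemma.
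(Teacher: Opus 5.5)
Your proof is correct and takes the same approach as the paper, which simply exhibits explicit block matrices. Your $P_1$ is the paper's $\left[\begin{array}{cc} U & J'J\\ 0 & U\end{array}\right]$ with the corner $J'J=E_{22}$ replaced by $E_{11}$, and your $P_2$ is a different explicit choice; its rank factorization $P_2-\Idm_4=\left[\begin{array}{c}\Idm_2\\ R\end{array}\right]\left[\begin{array}{cc} U+\Idm_2 & \Idm_2\end{array}\right]$, together with Remark~\ref{Schur-6}(3), gives the Jordan-type verification that the paper leaves to the reader.
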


\begin{proof}
	Let $U = \Jordan_2(1)$ and  $J = U + \Idm_2$.
	Let 
	\[
	P_1 = \left[ \begin{array}{cc}  U  &  J'J\\ 0 &  U \end{array} \right],
	P_2 = \left[ \begin{array}{cc} U  &  U'\\ J'J &  J \end{array} \right].
	\]
\end{proof}

\begin{lemma}  \label{Schur-10}	
	Let $P \in \GL(5,2)$ be similar to be similar to $\Jordan_2(1) \oplus \Jordan_3(1)$ or $\Idm_1 \oplus \Jordan_4(1)$. Then $P$ is similar to matrices $P_1, P_2$, where
	\begin{enumerate}
		\item $\PC_2(P_1) \in \mathbb{U}_2$ and  $\SC_2(P_1) \in \mathbb{X}_2 \oplus \Idm_1$,
		\item $\PC_2(P_2) \in \mathbb{X}_2$ and  $\SC_2(P_2) \in \mathbb{X}_2 \oplus \Idm_1$.
	\end{enumerate}
\end{lemma}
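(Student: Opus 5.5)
The plan is to construct $P_1$ and $P_2$ explicitly, in the same spirit as Lemmas \ref{Schur-6b}, \ref{Schur-9} and \ref{Schur-9a}. By Lemma \ref{Schur-1} (read backwards), a matrix with prescribed corner and Schur complement can be written as
\[
Q = \left[ \begin{array}{cc} A & B \\ C & S + CA^{-1}B \end{array} \right],
\]
where $A = \PC_2(Q)$, $S = \SC_2(Q)$, and $B \in \M(2,3,K)$, $C \in \M(3,2,K)$ are arbitrary. So I fix the target blocks:
\begin{enumerate}
\item $A = U := \Jordan_2(1)$ for $P_1$;
\item $A = X := \left[ \begin{array}{cc} 0 & 1 \\ 1 & 1 \end{array} \right] \in \mathbb{X}_2$ for $P_2$;
\item $S = X \oplus \Idm_1$ in both cases.
\end{enumerate}
It then remains to choose $B$ and $C$ over $\GF(2)$ so that $Q$ is similar to the prescribed unipotent matrix.

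First I simplify the parametrization. Lemma \ref{Schur-0}(2) says that conjugating by $Y \oplus Z$ replaces $A$ by $A^Y$ and $S$ by $S^Z$. Using this, I normalize $B$ to one of a few rank-normal forms, for instance $B = [\,\Idm_2 \mid 0\,]$ or $B = [\,e_1 \mid 0 \mid 0\,]$. Only $C$ is then left as a free $3\times 2$ parameter, giving $2^6$ choices for each case.

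Next I record which conditions on $Q$ pin down its similarity class. The matrix $Q$ is similar to $\Jordan_2(1)\oplus\Jordan_3(1)$ (respectively $\Idm_1\oplus\Jordan_4(1)$) exactly when three things hold:
\begin{enumerate}
\item $Q$ is unipotent, i.e.\ $(Q+\Idm_5)^5 = 0$, or equivalently its characteristic polynomial is $(x+1)^5$;
\item $\rank(Q+\Idm_5) = 3$;
\item $\rank(Q+\Idm_5)^2 = 1$ (respectively $2$).
\end{enumerate}
The determinant condition holds automatically, since $\det A \det S = 1$. The trace condition $\operatorname{tr} Q = 1$ is a single linear condition on $C$, because $\operatorname{tr}(CA^{-1}B)$ is linear in $C$. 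So I impose the trace first, then the remaining coefficients of the characteristic polynomial, and finally the two rank conditions. I expect a sparse choice of $C$, such as $C$ with a single nonzero column or a single nonzero entry in its last row, to give each of the four required matrices. I would then present these four matrices explicitly, as the paper does in the preceding lemmas.

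The main obstacle is making the characteristic polynomial equal to $(x+1)^5$ while keeping the correct Jordan type. The blocks $A$ and $S$ both carry the irreducible factor $x^2+x+1$ (for $P_2$, $A$ carries it twice over), and the off-diagonal coupling through $B$ and $C$ must cancel it entirely. If a sparse $C$ fails, I would fall back on a different route. I would start from a convenient representative, e.g.\ a matrix in block upper triangular form whose corner is already $U$ or $X$. Then I would apply the conjugations of Lemma \ref{Schur-1b} (the matrices $M_S$), which keep the similarity class fixed and vary the corner $S$ while leaving the Schur complement in the controlled form $\left[\begin{array}{cc} B & 0\\ D & -CS^{-1}\end{array}\right]$. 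Finally I would choose the free blocks so that this Schur complement lands in $\mathbb{X}_2\oplus\Idm_1$.
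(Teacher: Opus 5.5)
\textbf{There is a genuine gap: the proposal is a search plan, not a proof.} The whole content of the lemma is the existence of four witnesses, and you exhibit none of them. The sentence ``I expect a sparse choice of $C$ \dots'' leaves exactly the claim to be proved unverified.

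The reduction to ``only $C$ free, $2^6$ choices'' is also not legitimate. Once $A=X$ and $S=X\oplus\Idm_1$ are fixed, Lemma~\ref{Schur-0}(2) only permits conjugation by $Y\oplus Z$ with $Y$ centralizing $A$ and $Z$ centralizing $S$. That does not bring $B$ to a rank normal form. If you allow arbitrary $Y,Z$, the blocks $A$ and $S$ move within their classes instead.

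Your fallback via the matrices $M_S$ of Lemma~\ref{Schur-1b} is the right kind of idea, but you do not carry it out. There the middle block is $1$, so $\SC_2(M_S)=\left[\begin{array}{cc}1&0\\ D&CS^{-1}\end{array}\right]$. By Remark~\ref{Schur-6}, you then need a normal form as in Lemma~\ref{Schur-1a} whose block $C$ lies in $\mathbb{U}_2$ (to get $\PC_2\in\mathbb{U}_2$), resp.\ in $\{\Idm_2\}\cup\mathbb{X}_2$ (to get $\PC_2\in\mathbb{X}_2$). You never show that such $C$ are attainable in the given classes.

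The paper needs no search. $P$ is similar to $\left[\begin{array}{cc}A&b\\ 0&1\end{array}\right]$ with $A\sim\Jordan_3(1)\oplus\Idm_1$ (resp.\ $\Jordan_2(1)\oplus\Jordan_2(1)$). Lemma~\ref{Schur-9a} (resp.\ \ref{Schur-9}) is applied to $A$. Then $\left[\begin{array}{cc}\SC_2(A)&\ast\\ 0&1\end{array}\right]\sim\SC_2(A)\oplus\Idm_1$, because $\SC_2(A)\in\mathbb{X}_2$ has no eigenvalue $1$.

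Be aware, though, that the paper's second block form does not exist for $P\sim\Idm_1\oplus\Jordan_4(1)$. If $(\widetilde A-\Idm_4)^2=0$, then $\bigl(\left[\begin{array}{cc}\widetilde A&\widetilde b\\ 0&1\end{array}\right]-\Idm_5\bigr)^2$ has rank at most $1$, whereas $\rank\,(P-\Idm_5)^2=2$. So for $P_2$ in that case an explicit witness of the kind you are after is genuinely needed.

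Your fallback does supply one. With a normal form from Lemma~\ref{Schur-1a} having $C=\Idm_2$, and $S=X$ as in your proposal, one obtains for instance
\[
Q=\left[\begin{array}{ccccc}0&1&0&1&0\\ 1&1&0&0&1\\ 0&0&1&0&0\\ 1&0&1&0&0\\ 1&1&0&1&1\end{array}\right].
\]
This $Q$ satisfies $(Q+\Idm_5)^4=0$, $\rank(Q+\Idm_5)=3$ and $\rank\,(Q+\Idm_5)^2=2$, so $Q\sim\Idm_1\oplus\Jordan_4(1)$ by your own rank criterion. Moreover $\PC_2(Q)=X$ and $\SC_2(Q)=\left[\begin{array}{ccc}1&0&0\\ 1&1&1\\ 0&1&0\end{array}\right]\sim\Idm_1\oplus X$.
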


\begin{proof}
 $P$ is similar to matrices
	\[
P_1 = \left[ \begin{array}{cc} A  & b \\ 0 &  1 \end{array} \right],
P_2 = \left[ \begin{array}{cc}  \widetilde{A}  & \widetilde{b} \\ 0 &  1 \end{array} \right],
\]
where $A = \Jordan_3(1) \oplus \Idm_1$ and $\widetilde{A} =  \Jordan_2(1) \oplus \Jordan_2(1)$.
 Apply \ref{Schur-9a} to $A$ and  \ref{Schur-9} to $\widetilde{A}$.
\end{proof}

\begin{lemma}    \label{Schur-11}
Let $P \in \GL(5,2)$ be similar to  $\Idm_2 \oplus \Jordan_3(1)$. Then $P$ is similar to matrices $P_1, P_2, P_3$, where
	\begin{enumerate}
		\item $\PC_2(P_1) = \Idm_2$, $\SC_2(P_1) = \Jordan_3(1)$,
		\item $\PC_2(P_2) = \Idm_2$, $\SC_2(P_2) = \Jordan_2(1) \oplus \Idm_1$,
		\item $\PC_2(P_3) \in \mathbb{U}_2$ and  $\SC_2(P_3) \in \mathbb{X}_2 \oplus \Idm_1$.
	\end{enumerate}
\end{lemma}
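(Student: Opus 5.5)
Following the pattern of Lemmas \ref{Schur-7}--\ref{Schur-10}, I will write down explicit $5\times 5$ matrices over $\GF(2)$ in $2+3$ block form. For each of the three required pairs (principal corner, Schur complement), I check that the matrix is similar to $\Idm_2 \oplus \Jordan_3(1)$. Since $P$ is unipotent, this reduces to two checks. First, $Q-\Idm_5$ must be nilpotent. Second, the ranks must be right: $\rank(Q-\Idm_5)=2$ and $\rank(Q-\Idm_5)^2 = 1$. Equivalently, $\mr(Q)=2$ and the largest Jordan block has size $3$. The key tool is Lemma \ref{Schur-1}. If $A=\PC_2(Q)$ and $S=\SC_2(Q)$ are given, then
\[
Q = \left[ \begin{array}{cc} A & 0 \\ C & \Idm_3 \end{array}\right]\left[ \begin{array}{cc} \Idm_2 & A^{-1}B \\ 0 & S \end{array}\right],
\]
so I may take $Q = \left[ \begin{array}{cc} A & B \\ C & S + CA^{-1}B \end{array}\right]$ with free blocks $B$, $C$. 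I then tune $B$ and $C$ to hit the Jordan type of $P$.

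For (1) and (2), I take $A=\Idm_2$. The cheap try would be the block upper triangular matrix $Q=\left[ \begin{array}{cc} \Idm_2 & B \\ 0 & S \end{array}\right]$, whose Schur complement is exactly $S$. For $S=\Jordan_3(1)$ with $B=0$ this is already $P$, so (1) is trivial. For (2), with $S=\Jordan_2(1)\oplus \Idm_1$, I need $B$ to lengthen a Jordan chain. Take $B$ with a single nonzero entry linking the fixed vector of $\Idm_2$ to the top of the $\Jordan_2(1)$ chain. Concretely, place a $1$ in the column of the first basis vector of the $\Jordan_2$ block, so that $(Q-\Idm)$ sends a vector of the $\Jordan_2$ block into the first $\Idm_2$ coordinate. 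The result is a chain of length $3$ together with two fixed vectors. I then verify the ranks $\rank(Q-\Idm)=2$ and $\rank(Q-\Idm)^2=1$.

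For (3), take $A=U\in\mathbb{U}_2$ and $S = X\oplus \Idm_1$ with $X\in\mathbb{X}_2$. Now $S$ is not unipotent, so $C$ and $B$ must be nonzero, and the cross term $CA^{-1}B$ must cancel the $X$ part to make $Q$ unipotent. I would mimic $P_2$ in Lemma \ref{Schur-9a}, which realizes $\PC_2\in\mathbb{U}_2$, $\SC_2\in\mathbb{X}_2$ inside $\Idm_1\oplus\Jordan_3(1)$, and bordering it with an extra fixed coordinate. Since $P\sim \Idm_1\oplus(\Idm_1\oplus\Jordan_3(1))$, I would take $Q = \left[ \begin{array}{cc} Q' & 0 \\ 0 & 1\end{array}\right]$ with $Q'$ the matrix $P_2$ of Lemma \ref{Schur-9a}, and order the blocks so that the leading $2\times 2$ corner stays the same. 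Then the Schur complement is $\SC_2(Q')\oplus 1 \in \mathbb{X}_2\oplus\Idm_1$, and $Q\sim P$.

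The main obstacle is the hand verification of the Jordan types in (2) and (3). Over $\GF(2)$ this amounts to computing ranks of $Q-\Idm_5$ and its square, which is routine but error-prone. Case (3) is essentially free once Lemma \ref{Schur-9a} is in place. Case (2) needs the correct placement of the single entry of $B$: it has to extend the $\Jordan_2$ chain rather than create a second chain of length $2$.
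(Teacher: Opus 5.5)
Your proposal is correct and is essentially the paper's proof: (1) is immediate, and (3) is exactly the paper's step of bordering the matrix $P_2$ of Lemma~\ref{Schur-9a} by a fixed $1\times 1$ block (the paper allows a column $b$ there, but $b=0$ is available since $P\sim\Idm_2\oplus\Jordan_3(1)$). For (2) the paper cites Lemma~\ref{cyc1}, whereas you write the matrix down directly; your choice $B=E_{11}$ gives $Q-\Idm_5=E_{13}+E_{34}$ and $(Q-\Idm_5)^2=E_{14}$, so both the Jordan type and the Schur complement $\Jordan_2(1)\oplus\Idm_1$ check out.
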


\begin{proof}
Clearly, $P$ is similar to $P_1$. Using \ref{cyc1}, we see that $P$ is similar to $P_2$.
And $P$ is similar to a matrix
		\[
	\left[ \begin{array}{cc}  A & b \\ 0 &  1 \end{array} \right],
	\]
where $A =   \Jordan_3(1) \oplus \Idm_1$. Applying lemma \ref{Schur-9a} to $A$, we see that 
$P$ is similar to a matrix $P_3$. 
\end{proof}

\begin{lemma}  \label{Schur-12}
Let $P \in \GL(5,2)$ be similar to $\Idm_1 \oplus \Jordan_2(1) \oplus \Jordan_2(1)$. Then
$P$ is similar to matrices $P_1, P_2, P_3$, where
\begin{enumerate}
	\item $\PC_2(P_1) \in \mathbb{X}_2$ and  $\SC_2(P_1) \in \mathbb{X}_2 \oplus \Idm_1$,
	\item $\PC_2(P_2) \in \mathbb{U}_2$ and  $\SC_2(P_2) \in \mathbb{U}_2 \oplus \Idm_1$,
	\item $\PC_2(P_3) \in \mathbb{U}_2$ and  $\SC_2(P_3) =  \Idm_3$.
\end{enumerate}
\end{lemma}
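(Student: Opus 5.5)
I would follow the pattern of Lemmas \ref{Schur-10} and \ref{Schur-11}. I reduce to a $4\times 4$ block with a fixed vector, and apply Lemmas \ref{Schur-9} and \ref{Schur-9a} to that block. I also need one direct construction for the case $\PC_2 \in \mathbb{U}_2$, $\SC_2=\Idm_3$.

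\emph{Items (1) and (2).} $P$ is similar to a block triangular matrix
\[
\left[ \begin{array}{cc} A & b \\ 0 & 1 \end{array} \right], \qquad A = \Jordan_2(1) \oplus \Jordan_2(1),
\]
where $b$ is chosen so that the whole matrix has Jordan type $\Idm_1 \oplus \Jordan_2(1)\oplus \Jordan_2(1)$; for instance $b=0$ works. If $A$ is replaced by a similar matrix $A^Y$ and $b$ by $Y^{-1}b$, the conjugating matrix is $Y\oplus 1$. Moreover $\PC_2$ of the big matrix equals $\PC_2(A^Y)$. Its Schur complement is $\SC_2(A^Y)\oplus 1$ plus a strictly upper triangular column correction in the last column, and that block matrix is similar to $\SC_2(A^Y)\oplus 1$ whenever $\SC_2(A^Y)$ has no eigenvalue $1$. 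Otherwise, with $b=0$, it is literally $\SC_2(A^Y)\oplus 1$. So taking $b=0$ handles everything.

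Lemma \ref{Schur-9}(2) gives $A^Y$ with $\PC_2,\SC_2\in\mathbb{X}_2$, which yields $P_1$. I next need a conjugate of $A$ with $\PC_2,\SC_2\in\mathbb{U}_2$, which yields $P_2$. Lemma \ref{Schur-9}(1) does not give this, so I would take the explicit matrix
\[
\left[ \begin{array}{cc} \Jordan_2(1) & 0 \\ 0 & \Jordan_2(1) \end{array} \right],
\]
whose principal corner and Schur complement are both $\Jordan_2(1)\in\mathbb{U}_2$. This gives $P_2$ with $\SC_2(P_2)=\Jordan_2(1)\oplus 1\in \mathbb{U}_2\oplus\Idm_1$.

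\emph{Item (3).} This is the main obstacle, since the Schur complement must be $\Idm_3$ and the trivial splitting does not give it. Here I need $Q$ with $\PC_2(Q)=U\in\mathbb{U}_2$, $\SC_2(Q)=\Idm_3$, and $Q$ unipotent of type $\Idm_1\oplus\Jordan_2(1)^2$, i.e.\ $\rank(Q-\Idm_5)=2$ and $(Q-\Idm)^2=0$. By Lemma \ref{Schur-1}, any such $Q$ has the form
\[
Q=\left[ \begin{array}{cc} U & B \\ C & \Idm_3 + CU^{-1}B \end{array} \right],
\]
and I only need to choose $B\in\M(2,3,K)$ and $C\in\M(3,2,K)$. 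I would try $C=0$ first. Then $Q-\Idm = \left[\begin{smallmatrix} U+\Idm_2 & B\\ 0&0\end{smallmatrix}\right]$, whose rank is $\rank[\,U+\Idm_2 \mid B\,]$. To get rank $2$ I choose $B$ so that its columns together with $U+\Idm_2$ span $K^2$; for example $B=[\,e_1\;0\;0\,]$ with $U=\Jordan_2(1)$, since $U+\Idm_2=E_{12}$.

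It remains to check $(Q-\Idm)^2=0$. Its upper left block is $(U+\Idm)^2=0$ and its upper right block is $(U+\Idm)B$, which vanishes if the columns of $B$ lie in $\ker(U+\Idm)=\langle e_1\rangle$; the choice above has this property. The Jordan type then has two blocks of size $2$ (rank $2$, square zero, $n=5$), so $Q\sim\Idm_1\oplus\Jordan_2(1)^2$ and $Q=P_3$.
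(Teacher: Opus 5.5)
Your items (1) and (2) are correct and agree with the paper. Item (1) is Lemma \ref{Schur-9}(2) applied to $A$ with $b=0$, and item (2) is the trivial splitting $\Jordan_2(1)\oplus\Jordan_2(1)\oplus 1$.

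Item (3) fails. With $U=\Jordan_2(1)$ and $B=[\,e_1\;0\;0\,]$, the matrix $[\,U+\Idm_2\mid B\,]=\left[\begin{smallmatrix}0&1&1&0&0\\0&0&0&0&0\end{smallmatrix}\right]$ has rank $1$, not $2$. The column space of $U+\Idm_2=E_{12}$ is $\langle e_1\rangle$, which is the same line that contains the columns of $B$. Hence your $Q$ is similar to $\Jordan_2(1)\oplus\Idm_3$, not to $\Idm_1\oplus\Jordan_2(1)^2$.

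A better choice of $B$ will not fix this, because with $C=0$ your two requirements are incompatible. The condition $(Q-\Idm)^2=0$ forces the columns of $B$ into $\ker(U+\Idm_2)$, and this kernel equals $\operatorname{im}(U+\Idm_2)$. Therefore $\rank(Q-\Idm)=\rank(U+\Idm_2)=1$. Equivalently: if $C=0$, the image of $Q-\Idm$ lies in the invariant plane on which $Q$ acts as $U$. Rank $2$ would make the image equal to that plane, and then $(Q-\Idm)^2=0$ would give $U=\Idm_2$.

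You missed that Lemma \ref{Schur-9}(1) is exactly what item (3) needs. You rejected it for $P_2$, which was correct there. But it gives a conjugate $A^Y$ of $A=\Jordan_2(1)\oplus\Jordan_2(1)$ with $\PC_2(A^Y)\in\mathbb{U}_2$ and $\SC_2(A^Y)=\Idm_2$. Your own $b=0$ reduction then yields $P_3=A^Y\oplus 1$ with $\SC_2(P_3)=\Idm_2\oplus 1=\Idm_3$; this is the paper's proof.

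In your Schur-complement parametrization, this means taking $C\neq 0$. Choose $B=[\,U+\Idm_2\mid 0\,]$ and $C=\left[\begin{smallmatrix}U+\Idm_2\\ 0\end{smallmatrix}\right]$. Then $CU^{-1}B$ vanishes, because $(U+\Idm_2)U^{-1}(U+\Idm_2)=U^{-1}(U+\Idm_2)^2=0$. So the lower-right block is $\Idm_3$, and $Q$ is precisely the matrix $P_1$ of Lemma \ref{Schur-9} bordered by $1$.
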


\begin{proof}
Clearly, $P \sim P_2$. By lemma \ref{Schur-9}, $P \sim P_1$ and  $P_3$.
\end{proof}

\begin{corollary} 
Let $n = 5$. Then all primary matrices  $M \in \GL(5,2)$ with minimal rank 2 or 3 are good.
\end{corollary}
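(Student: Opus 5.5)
The corollary is assembled from Lemmas \ref{Schur-10}, \ref{Schur-11}, \ref{Schur-12} and Remark \ref{Schur-6}. Here $n=5$ and $K=\GF(2)$. By the reductions at the start of the proof of Theorem \ref{main-1}, $m\ne 1,4$. Replacing $M$ by its conjugate under the matrix $X$ of Lemma \ref{Schur-0} swaps the roles of principal corner and Schur complement. So we may take $m=2$, with $\Omega=\Omega_1\oplus\Omega_2$, $\Psi=\Psi_1\oplus\Psi_2$, $\Omega_1,\Psi_1\subseteq\GL(2,2)$ and $\Omega_2,\Psi_2\subseteq\GL(3,2)$.

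A primary matrix over $\GF(2)$ of size $5$ with minimal rank $\ge 2$ is either unipotent, or a power of an irreducible quadratic or cubic; the latter cannot have size $5$ unless it involves $x^2+x+1$ in an odd total degree, which is impossible. So $M$ is unipotent. Minimal rank $2$ or $3$ then means the Jordan type is one of $\Jordan_2(1)\oplus\Jordan_3(1)$, $\Idm_1\oplus\Jordan_4(1)$, $\Idm_2\oplus\Jordan_3(1)$, $\Idm_1\oplus\Jordan_2(1)\oplus\Jordan_2(1)$. (Minimal rank $4$ is the cyclic case, excluded.)

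For each type, the relevant lemma gives several conjugates $P_i$ of $M$ with specified $\PC_2(P_i)$ and $\SC_2(P_i)$. I must show that for every choice of the classes $\Omega_1,\Psi_1$ (each cyclic in $\GL(2,2)$, so in $\{\mathbb{X}_2,\mathbb{U}_2\}$, or the class of $\Idm_1\oplus\Idm_1$ is not cyclic so excluded) one of the $P_i$ is good. The condition $\PC_2(P_i)\in\Omega_1\Psi_1$ is checked with Remark \ref{Schur-6}: $\Omega_1\Psi_1$ is $\Idm_2\cup\mathbb{X}_2$ if $\Omega_1=\Psi_1$, and $\mathbb{U}_2$ otherwise.

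The condition $\SC_2(P_i)\in\Omega_2\Psi_2$ is handled by Theorem \ref{BN-2} whenever $\SC_2(P_i)$ is nonprimary, as for $\mathbb{X}_2\oplus\Idm_1$. When $\SC_2(P_i)$ is primary, Theorem \ref{BN} applies instead. This covers $\Idm_3$, $\Jordan_3(1)$ and $\Jordan_2(1)\oplus\Idm_1$, since every cyclic class of $\GL(3,2)$ is either irreducible or $(1,2)$- or $(2,1)$-cyclic up to primary ones. The determinant condition is automatic over $\GF(2)$.

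The case analysis runs as follows.
\begin{enumerate}
\item For the two types of Lemma \ref{Schur-10}: if $\Omega_1\ne\Psi_1$ use $P_1$; otherwise use $P_2$ (note $\mathbb{X}_2\subseteq\Idm_2\cup\mathbb{X}_2$).
\item For $\Idm_1\oplus\Jordan_2(1)\oplus\Jordan_2(1)$: use $P_1$ when $\Omega_1=\Psi_1$, and $P_2$ or $P_3$ otherwise.
\item For $\Idm_2\oplus\Jordan_3(1)$: use $P_3$ when $\Omega_1\ne\Psi_1$. When $\Omega_1=\Psi_1$, use $P_1$ or $P_2$ with $\PC_2=\Idm_2\in\Omega_1\Psi_1$.
\end{enumerate}

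The main obstacle is the last case: $\SC_2(P_1)=\Jordan_3(1)$ and $\SC_2(P_2)=\Jordan_2(1)\oplus\Idm_1$ are both primary, so one of them must lie in $\Omega_2\Psi_2$ for every pair of cyclic classes of $\GL(3,2)$. If $\Omega_2$ or $\Psi_2$ is $(1,2)$-cyclic, Theorem \ref{BN} gives both. If both are primary cyclic or irreducible, Lemma \ref{simple-lemma} places the transvection-type element $\Jordan_2(1)\oplus\Idm_1$ in the product unless both are irreducible. In the remaining subcase, where both are irreducible of degree $3$, I would fall back on $\Jordan_3(1)$, using that it has minimal rank $2$ and checking directly (e.g.\ via Lemma \ref{cyc2}-style factorisations) that it lies in the product of two irreducible classes. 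That direct check is the step I expect to need the most care.
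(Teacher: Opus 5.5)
Your reduction to $m=2$, the list of unipotent types, and the steps that use Remark~\ref{Schur-6} with Theorem~\ref{BN-2} for the complements in $\mathbb{X}_2\oplus\Idm_1$ are fine. The handling of the primary Schur complements, however, has a genuine gap.

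\textbf{Theorem~\ref{BN} is never available here.} The only $(1,2)$-cyclic class of $\GL(3,2)$ has minimal polynomial $(x+1)(x^2+x+1)$. This is never coprime to the minimal polynomial of $\Omega_1$ or $\Psi_1\in\{\mathbb{U}_2,\mathbb{X}_2\}$. So $\Omega_2,\Psi_2$ range only over three classes: the class of $\Jordan_3(1)$, the class $C$ with minimal polynomial $x^3+x+1$, and $C^{-1}$. Moreover, Theorem~\ref{BN} never produces the scalar $\Idm_3$. Your case $\Idm_1\oplus\Jordan_2(1)\oplus\Jordan_2(1)$ with $\Omega_1\ne\Psi_1$ is therefore unsupported as written.

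\textbf{The fallback in your last case is false: $\Jordan_3(1)\notin CC$.} In $\GL(3,2)\cong\PSL(2,7)$, $C$ consists of elements of order $7$ and $\Jordan_3(1)$ has order $4$. The class multiplication coefficient is $\frac{24^2}{168}\left(1+\frac{\alpha^2+\overline{\alpha}^2}{3}\right)=0$, with $\alpha=\frac{-1+\sqrt{-7}}{2}$; only the characters of degree $1$ and $3$ contribute. So take $\Omega=\Psi$ with minimal polynomial $(x+1)^2(x^3+x+1)$ and $M\sim\Idm_2\oplus\Jordan_3(1)$. Here $P_3$ is unusable, and your chosen $P_1$ of Lemma~\ref{Schur-11} fails.

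What works there is $P_2$. By part 1 of the proof of Lemma~\ref{simple-lemma}, $\Omega_2\Psi_2$ contains a product of two companion-type matrices that is either $\Idm_3$ or a transvection. Since $C^{-1}\ne C$, we have $\Idm_3\notin CC$, so the transvection lies in $CC$. The ``only if'' half of Lemma~\ref{simple-lemma} suggested to you that the transvection is absent whenever both classes are irreducible. That half is proved there only for $\Psi=\Omega^{-1}$, and $CC$ shows it is false in general. The relevant dichotomy is $\Psi_2=\Omega_2^{-1}$ versus $\Psi_2\ne\Omega_2^{-1}$, not irreducible versus reducible.

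\textbf{How to close both holes.} Use Lemma~\ref{cyc2}, the device the paper itself uses for this purpose in Lemmas~\ref{Schur-15} and~\ref{Schur-16}. With $\alpha=1$ and $D=\Jordan_2(1)$, respectively $D=\Idm_2$, it yields a row vector $b$ with
\[
\left[ \begin{array}{cc} 1 & b \\ 0 & D \end{array} \right] \in \Omega_2\Psi_2 .
\]
Such a matrix is similar to $\Jordan_3(1)$ or $\Jordan_2(1)\oplus\Idm_1$, respectively to $\Idm_3$ or $\Jordan_2(1)\oplus\Idm_1$. These are exactly the alternative Schur complements offered by $P_1,P_2$ in Lemma~\ref{Schur-11} and by $P_2,P_3$ in Lemma~\ref{Schur-12}. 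Hence one of the offered conjugates is good for every admissible pair $\Omega_2,\Psi_2$. This needs no case analysis on the classes of $\GL(3,2)$ and no appeal to Theorem~\ref{BN}.
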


\begin{proof}
Clearly, $M$ must be unipotent. Apply \ref{Schur-10}, \ref{Schur-11}, and \ref{Schur-12}  together with \ref{Schur-6} and \ref{BN-2}.
\end{proof}

\subsubsection{The case $K=\GF(2), n \ge 6$}

\begin{lemma} \label{Schur-13}
Let $n \ge 6$. Let $P  \in \GL(n,2)$ be unipotent but not involutory. Then $P$ is good.
\end{lemma}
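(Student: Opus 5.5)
Recall the setting. We have $K=\GF(2)$, $n\ge 6$, and the reductions already made: $2\le m\le n-2$ (by Theorem \ref{BN}), and $P$ is primary, noncyclic, with $\mr(P)\ge 2$. Lemma \ref{Schur-2} handles the case $m,\mr(P)\ge 3$; by the symmetry of Lemma \ref{Schur-0}(1) the same holds with $m$ replaced by $n-m$. So only two cases remain: $m=2$ (or $n-m=2$) with $P$ arbitrary, and $\mr(P)=2$.

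Over $\GF(2)$ the relevant facts are these. Every $2\times 2$ product $\Omega_1\Psi_1$ is one of the sets in Remark \ref{Schur-6}, namely $\Idm_2\cup\mathbb{X}_2$, $\mathbb{U}_2$, or a product containing $\mathbb{S}$-type classes. The cyclic classes of $\GL(2,2)$ are $\mathbb{X}_2$ and $\mathbb{U}_2$. Hence $P$ is good as soon as $P$ is similar to some $Q$ with $\PC_2(Q)\in\mathbb{U}_2$ and $\SC_2(Q)$ nonprimary, and also to some $Q'$ with $\PC_2(Q')\in\mathbb{X}_2\cup\Idm_2$ and $\SC_2(Q')$ nonprimary. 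The nonprimary condition lets us invoke Theorem \ref{BN-2} on the $(n-2)$-block. This is exactly the conclusion of Lemma \ref{Schur-6a}.

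\emph{Case $m=2$.} I will check the hypothesis of Lemma \ref{Schur-6a}, $\partial_n(P)+\partial_{n-1}(P)\ge 6$. Since $P$ is unipotent and not involutory, it has a Jordan block of size at least $3$, so $\partial_n(P)\ge 3$. If $\partial_n+\partial_{n-1}\le 5$, the possible Jordan types are $(3,2,1,\dots)$, $(3,1,\dots)$, $(4,1,\dots)$ and $(3,2,\dots)$. For these I will not use Lemma \ref{Schur-6a}. Instead I write $P\sim \Idm_{n-5}\oplus M$ or $P\sim \Idm_{n-4}\oplus M$ with $M$ unipotent of size $4$ or $5$, and apply Lemmas \ref{Schur-9a}, \ref{Schur-10}, \ref{Schur-11}, \ref{Schur-12} to $M$ using the leading $2\times2$ corner. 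Adding identity blocks to the Schur complement keeps it nonprimary whenever it already contains an $\mathbb{X}_2$ summand. When the Schur complement produced is unipotent (for example $\Jordan_3(1)$ or $\Jordan_2(1)\oplus\Idm_1$), it is itself primary; then I instead pair the corner with a Schur complement containing $\mathbb{X}_2$ and use the other product from Remark \ref{Schur-6}. For the type containing $\Jordan_2(1)\oplus\Jordan_2(1)$ I use Lemma \ref{Schur-9}. The case $n-m=2$ follows by Lemma \ref{Schur-0}(1).

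\emph{Case $\mr(P)=2$ and $3\le m\le n-3$.} Mimicking Corollary \ref{Schur-5}, I write $P\sim \Idm_{n-k}\oplus M$, where $M$ has only two nontrivial invariant factors and small size $k$. The $m=2$ results give a $2\times 2$ corner in $\mathbb{U}_2$ or $\mathbb{X}_2\cup\Idm_2$ with nonprimary Schur complement. I then enlarge the corner by identity blocks (via Lemma \ref{Schur-0}(2) and a suitable permutation) to get $\PC_m\in \mathbb{U}_2\oplus\Idm_{m-2}$ or $\mathbb{X}_2\oplus\Idm_{m-2}$. These corners are nonprimary because they contain both unipotent and non-unipotent parts when $\mathbb{X}_2$ is used. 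Theorem \ref{BN-2} needs the $m$-block corner to be nonprimary and the complement nonprimary, so I must ensure both blocks carry an $\mathbb{X}_2$ piece. Lemma \ref{Schur-6b}, Lemma \ref{Schur-9}(2) and Lemma \ref{Schur-10}(2) supply exactly such splittings with $\mathbb{X}_2$ in both blocks.

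\emph{Expected obstacle.} The main difficulty is the bookkeeping of the small exceptional Jordan types when $n=6,7$. For these, the identity padding is too short to place an $\mathbb{X}_2$ piece in both blocks. I would settle these finitely many cases by explicit block matrices like those in Lemmas \ref{Schur-9}–\ref{Schur-12}, or by a GAP check.
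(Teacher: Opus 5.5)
\textbf{There is a genuine gap, in the case $P\sim\Jordan_3(1)\oplus\Idm_{n-3}$.} Your sufficient criterion is correct: a $\mathbb{U}_2$-corner and an $(\Idm_2\cup\mathbb{X}_2)$-corner, each with nonprimary Schur complement. It is met for $\partial_n(P)+\partial_{n-1}(P)\ge 6$ by Lemma \ref{Schur-6a}. For the types $(4,1,\dots)$ and $(3,2,\dots)$ it is met by Lemma \ref{Schur-10}, if you write $P\sim Q\oplus U$ with $U$ involutory; identity padding does not cover $(3,2,2,\dots)$. The gap is $P\sim\Jordan_3(1)\oplus\Idm_{n-3}$ when $\Omega_1,\Psi_1$ are both $\mathbb{X}_2$ or both $\mathbb{U}_2$, so that $\Omega_1\Psi_1=\Idm_2\cup\mathbb{X}_2$.

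The lemmas that apply to this type are \ref{Schur-9a} and \ref{Schur-11}. They give nonprimary complements only for $\mathbb{U}_2$-corners. The $\Idm_2$-corners of Lemma \ref{Schur-11} come with the primary complements $\Jordan_3(1)\oplus\Idm_{n-5}$ and $\Jordan_2(1)\oplus\Idm_{n-4}$, which Theorem \ref{BN-2} does not reach. Your fallback, to ``use the other product from Remark \ref{Schur-6}'', is not a legitimate move. $\Omega_1\Psi_1$ is fixed by the given classes, and by your own criterion both products must be served.

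An $\mathbb{X}_2$-corner cannot serve this product either. Suppose $\PC_2(Q)=A\in\mathbb{X}_2$ and $\rank(Q-\Idm_n)=2$. Then $A+\Idm_2=A^2$ is invertible, so the rank condition forces $D=\Idm+CAB$. Hence $\SC_2(Q)=\Idm+CB$ and
\[
Q-\Idm_n=\left[\begin{array}{c}\Idm_2\\ CA\end{array}\right]\left[\begin{array}{cc}A^2 & B\end{array}\right],
\qquad
(Q-\Idm_n)^2=\left[\begin{array}{c}\Idm_2\\ CA\end{array}\right](A^2+BCA)\left[\begin{array}{cc}A^2 & B\end{array}\right].
\]
A nonprimary $\SC_2(Q)$ forces $BC$ to be nonnilpotent with $\det(\Idm_2+BC)=1$, so $BC\in\{A,A^2\}$. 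If $BC=A$, then $(Q-\Idm_n)^2=0$. If $BC=A^2$, the middle factor is $A^2+\Idm_2=A$, so $Q$ is not unipotent.

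\textbf{How to close it.} Within your framework, use an $\Idm_2$-corner:
\[
Q_0=\left[\begin{array}{cccc}1&0&1&0\\0&1&0&0\\1&0&1&1\\0&0&1&1\end{array}\right]
\]
satisfies $\rank(Q_0-\Idm_4)=2$ and $(Q_0-\Idm_4)^2\ne 0=(Q_0-\Idm_4)^3$, so $Q_0\sim\Jordan_3(1)\oplus\Idm_1$. Moreover $\PC_2(Q_0)=\Idm_2$ and $\SC_2(Q_0)=\left[\begin{array}{cc}0&1\\1&1\end{array}\right]\in\mathbb{X}_2$. Thus $Q_0\oplus\Idm_{n-4}$ has corner $\Idm_2$ and nonprimary complement $X\oplus\Idm_{n-4}$, and Theorem \ref{BN-2} applies. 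The paper instead keeps the $\Idm_2$-corners of Lemma \ref{Schur-11}(1),(2). It then needs their primary complements to lie in $\Omega_2\Psi_2$, which has to come from something other than Theorem \ref{BN-2}, e.g.\ Lemma \ref{simple-lemma}.

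\textbf{Your second case is out of scope, and its obstacle is misdiagnosed.} The case $3\le m\le n-3$, $\mr(P)=2$ is not part of this lemma; the paper treats it in Lemma \ref{Schur-16}. There the difficulty is not small $n$. For unipotent $Q$ with $\mr(Q)=2$, a nonprimary $\PC_m(Q)$ must be similar to $X\oplus\Idm_{m-2}$. The rank condition then splits off the block $\Idm_{m-2}$. The computation above then shows $\SC_m(Q)$ is primary for $P\sim\Jordan_3(1)\oplus\Idm_{n-3}$ and every $n$. So ``$\mathbb{X}_2$ in both blocks'' is impossible for that type. The paper avoids this by using unipotent corners and complements together with Lemma \ref{cyc2}.
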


\begin{proof}
If $\partial_n(P) + \partial_{n-1}(P) \ge 6$ apply lemma \ref{Schur-6a}.
Assume next that   $\partial_n(P) + \partial_{n-1}(P)= 5$. Then $P$ is similar to $Q \oplus U$, where $Q \in \GL(5, 2)$, and $U$ is involutory. By lemma \ref{Schur-10}, $P$ is similar to matrices $P_1$ and $P_2$ , where
	\begin{enumerate}
	\item $\PC_2(P_1) \in \mathbb{U}_2$ and  $\SC_2(P_1) \sim  X \oplus U$,
	\item $\PC_2(P_2) \in \mathbb{X}_2$ and  $\SC_2(P_2) \sim X  \oplus U$.
\end{enumerate}
	So let  $\partial_n(P) = 3,  \partial_{n-1}(P) = 1$. By lemma \ref{Schur-11},
	$P$ is similar to matrices $P_1, P_2$, and $P_3$, where
	\begin{enumerate}
		\item $\PC_2(P_1) = \Idm_2$ and  $\SC_2(P_1) \sim  \Jordan_3(1) \oplus \Idm_{n-5}$,
		\item $\PC_2(P_2) = \Idm_2$ and  $\SC_2(P_2) \sim  \Jordan_2(1) \oplus \Idm_{n-4}$,
		\item $\PC_2(P_3) \in \mathbb{U}_2$ and  $\SC_2(P_3) \sim X  \oplus \Idm_{n-2}$.
	\end{enumerate}
\end{proof}

\begin{lemma} \label{Schur-14}
	Let $P \in  \mathbb{U}_2 \oplus \mathbb{U}_2 \oplus \mathbb{U}_2$.
	Then $P$ is similar to a matrix $Q$ with $\PC_2(Q) \in \mathbb{U}_2$ and $\SC_2(Q) \sim \Jordan_3(1) \oplus \Idm_1$.
\end{lemma}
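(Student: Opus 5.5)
The plan is to give an explicit matrix $Q$ and then verify the required properties directly, just as in the proofs of lemmas \ref{Schur-6b}, \ref{Schur-9} and \ref{Schur-9a}. Over $\GF(2)$ the class $\mathbb{U}_2 \oplus \mathbb{U}_2 \oplus \mathbb{U}_2$ consists of the $6 \times 6$ matrices $Q$ with
\[
(Q+\Idm_6)^2 = 0 \quad \text{and} \quad \rank(Q+\Idm_6) = 3 .
\]
So it suffices to find $Q$ with $\PC_2(Q) = U := \Jordan_2(1)$ and $\SC_2(Q) = S := \Jordan_3(1) \oplus \Idm_1$ satisfying these two conditions.

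To organize the search I will parametrize $Q$ by Lemma \ref{Schur-1}. I take
\[
Q = \left[ \begin{array}{cc} U & B \\ C & D \end{array} \right], \qquad D = S + C U^{-1} B ,
\]
which automatically gives $\PC_2(Q) = U$ and $\SC_2(Q) = S$. Write $N = U + \Idm_2$ and $N_S = S + \Idm_4$, so that $N^2 = 0$, $N_S^3 = 0$ and $\rank N_S^2 = 1$. Then the condition $(Q+\Idm_6)^2 = 0$ splits into four block equations:
\begin{enumerate}
\item $BC = 0$, using $N^2 = 0$;
\item $NB + B(D+\Idm_4) = 0$;
\item $CN + (D+\Idm_4)C = 0$;
\item $(D+\Idm_4)^2 = CB$, where $D+\Idm_4 = N_S + CU^{-1}B$.
\end{enumerate}
The key point is that the perturbation $CU^{-1}B$ must lower the nilpotency index of $N_S$ from $3$ to $2$, and this forces $B$ and $C$ to be nonzero. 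If $C = 0$, then $Q$ is block triangular with a $\Jordan_3(1)$ block on the diagonal, so $(Q+\Idm_6)^2 \ne 0$.

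I will first try $B$ and $C$ of rank one, $B = e_i^T f$ and $C = g\, e_j$ with $e_j e_i^T = 0$ (that is, $j \ne i$), so that equation (1) holds. The choice of $f$ and $g$ should be matched to the chain $e_1 \mapsto e_2 \mapsto e_3$ of $N_S$. The rank-one term $g e_j U^{-1} e_i^T f$ must then cancel the image of $N_S^2$, which forces equation (4) to reduce to a few scalar conditions. Equations (2) and (3) give a handful of linear conditions on $f$ and $g$ over $\GF(2)$. If no rank-one choice works, I will allow $B$ and $C$ of rank two.

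This solving step is the main obstacle. The space of candidates is finite and small: $B$ and $C$ together have $16$ free entries. So if a hand solution is not found quickly, I will do an exhaustive check, e.g. in GAP as elsewhere in the paper. Once a candidate $Q$ is found, the rest is routine: compute $(Q+\Idm_6)^2 = 0$ and $\rank(Q+\Idm_6) = 3$, which shows that $Q \sim P$.
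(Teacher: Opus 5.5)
Your plan is the paper's approach: exhibit one explicit $Q$ and check $(Q+\Idm_6)^2=0$ and $\rank(Q+\Idm_6)=3$. The paper's proof is a single $6\times 6$ matrix, and your reduction to equations (1)--(4) is correct. But the lemma is a pure existence statement and you never produce a witness. The only substantive step is deferred to a search that is not carried out, so as written nothing is proved.

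Moreover, the ansatz you would try first cannot work. As an aside, $j\neq i$ does not give (1): $BC=(fg)\,e_i^Te_j$ vanishes iff $fg=0$, while the condition $e_je_i^T=0$ kills $CB$ instead. In characteristic $2$, (2) and (3) say $NB=B(D+\Idm_4)$ and $CN=(D+\Idm_4)C$. So the column space of $B$ is $N$-invariant, and the row space of $C$ is invariant under right multiplication by $N$. A one-dimensional space invariant under a nilpotent map lies in its kernel, so rank-one $B$ and $C$ are forced to be $B=e_1^Tf$ and $C=g\,e_2$. Then $CB=0$, and since $U^{-1}=\Idm_2+N$, also $CU^{-1}B=0$. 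Hence $D+\Idm_4=N_S$, and (4) becomes $N_S^2=0$, which is false. So at least one of $B,C$ must have rank $2$; the ``cancellation'' you describe never occurs for rank-one pairs.

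A witness in your normalization is
\[
Q=\left[\begin{array}{cccccc}
1&1&0&0&1&0\\
0&1&0&0&0&0\\
1&1&1&1&1&0\\
0&1&0&1&1&0\\
0&0&0&0&1&0\\
0&1&0&0&0&1
\end{array}\right].
\]
Here $E_{ij}$ denotes a matrix unit of the appropriate size. In your notation:
\begin{itemize}
\item $B=E_{13}$, which has rank one;
\item $C$ has rows $(1,1),(0,1),(0,0),(0,1)$, so rank two;
\item $CU^{-1}B=E_{13}$ and $D=S+E_{13}$.
\end{itemize}
One checks $BC=0$, $NB=0=B(D+\Idm_4)$, $CN=E_{12}=(D+\Idm_4)C$ and $(D+\Idm_4)^2=E_{13}=CB$, so $(Q+\Idm_6)^2=0$. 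The nonzero rows of $Q+\Idm_6$ are $r_1=r_4$, $r_3$, $r_6$, which are independent, so $\rank(Q+\Idm_6)=3$.

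This is the paper's matrix conjugated by $\mathrm{diag}(Y,Z)$ with $Y=\left[\begin{smallmatrix}1&1\\1&0\end{smallmatrix}\right]$ and $Z=(\Idm_3+E_{12})\oplus 1$. By Lemma~\ref{Schur-0}, this conjugation only conjugates $\PC_2$ and $\SC_2$. With this matrix supplied, your outline becomes a complete proof.
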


\begin{proof}
$P$ is similar to
\[
\left[ \begin{array}{cccccc} 
	0 & 1 & 0 & 0 & 1 & 0 \\
	1 & 0 & 0 & 0 & 1 & 0 \\
	0 & 1 & 1 & 1 & 0 & 0 \\
	1 & 1 & 0 & 1 & 1 & 0 \\
	0 & 0 & 0 & 0 & 1 & 0 \\
	1 & 1 & 0 & 0 & 0 & 1 
\end{array} \right].
\]
\end{proof}

\begin{lemma} \label{Schur-15}
Let $n \ge 6$. Then  all involutory  matrices 
$P \in \GL(n, 2)$  with $\mr(P) \ge 2$ are good.
\end{lemma}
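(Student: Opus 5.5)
We show that $P$ is similar to matrices $Q_1,Q_2$ with $\PC_2(Q_1)\in\mathbb{U}_2$, $\PC_2(Q_2)\in\mathbb{X}_2\cup\Idm_2$, and $\SC_2(Q_1),\SC_2(Q_2)$ nonprimary. This suffices. By remark \ref{Schur-6}, the product $\Omega_1\Psi_1$ of two cyclic classes of $\GL(2,2)$ contains $\Idm_2\cup\mathbb{X}_2$ when $\Omega_1=\Psi_1$, and contains $\mathbb{U}_2$ otherwise. The Schur complement part is then handled by theorem \ref{BN-2}, exactly as in lemma \ref{Schur-13}.

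We treat the case $m=2$; $m=n-2$ follows by lemma \ref{Schur-0}(1), which interchanges corner and Schur complement. For $3\le m\le n-3$ and $\mr(P)\ge 3$, lemma \ref{Schur-2} applies directly. For $\mr(P)=2$ we intend to reduce to the $m=2$ pieces below by splitting off identity summands, as in corollary \ref{Schur-5}. Write $P\sim \mathbb{U}_2^{\oplus s}\oplus\Idm_{n-2s}$, so $s=\mr(P)\ge 2$, and put $R=\mathbb{U}_2^{\oplus (s-2)}\oplus\Idm_{n-2s}$; then $P\sim(\mathbb{U}_2\oplus\mathbb{U}_2)\oplus R$ with $R$ nonempty because $n\ge 6$. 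We use throughout that if $Y$ has corner $\PC_2(Y)$ and Schur complement $\SC_2(Y)$, then $Y\oplus R$ has $\PC_2(Y\oplus R)=\PC_2(Y)$ and $\SC_2(Y\oplus R)=\SC_2(Y)\oplus R$.

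\emph{Corner in $\mathbb{X}_2\cup\Idm_2$.} Apply lemma \ref{Schur-9}(2) to the summand $\mathbb{U}_2\oplus\mathbb{U}_2$. This gives $Q_2\sim P$ with $\PC_2(Q_2)\in\mathbb{X}_2$ and $\SC_2(Q_2)\sim X\oplus R$ for some $X\in\mathbb{X}_2$. Since $x^2+x+1$ is coprime to $x+1$ and $R$ is unipotent, $\SC_2(Q_2)$ is nonprimary.

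\emph{Corner in $\mathbb{U}_2$.} We use the matrices $M_S$ of lemma \ref{Schur-1b}, for which
\[
\SC_2(M_S)=\left[ \begin{array}{cc} B & 0\\ D & CS^{-1}\end{array}\right].
\]
By the interlacing theorem \ref{INTERLAC}, $B$ divides into factors of $(x+1)^2$, so $B$ is unipotent. The aim is to choose $S\in\mathbb{U}_2$ with $CS^{-1}\in\mathbb{X}_2$, since then $\SC_2(M_S)$ is block triangular with unipotent and $\mathbb{X}_2$ diagonal blocks, hence nonprimary. This works when $C\in\mathbb{U}_2$, because $\mathbb{U}_2\mathbb{U}_2\supseteq\mathbb{X}_2$.

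\emph{Main obstacle.} The difficulty is that $C$ is not under our control in lemma \ref{Schur-1a}: it could be $\Idm_2$ or lie in $\mathbb{X}_2$. Then $CS^{-1}\in\mathbb{U}_2$ and $\SC_2(M_S)$ may be unipotent. The first remedy is to exploit the freedom in the antiinvariant subspace from lemma \ref{Sourour}. Changing it, for instance by the substitution $C\mapsto C+DS'$ in the proof of lemma \ref{Schur-1a}, should move $C$ into $\mathbb{U}_2$ whenever $D\neq 0$ and $s\ge 3$. If $D=0$ or $s=2$ with $n\in\{6,7\}$, I would fall back on explicit $6\times 6$ (resp.\ $7\times7$) matrices in the style of lemmas \ref{Schur-9} and \ref{Schur-14}. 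For $s=2$, the goal is a conjugate of $\mathbb{U}_2\oplus\mathbb{U}_2\oplus\Idm_2$ with corner in $\mathbb{U}_2$ and Schur complement in $\mathbb{X}_2\oplus\Idm_2$, padding with $\Idm_{n-6}$ for larger $n$. For $s\ge3$, the goal is an analogous conjugate of $\mathbb{U}_2^{\oplus3}$, padded with $R'=\mathbb{U}_2^{\oplus(s-3)}\oplus\Idm_{n-2s}$.
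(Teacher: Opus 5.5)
There is a genuine gap, and it is not only a matter of controlling $C$: the conjugate $Q_1$ you aim for does not exist. Suppose $Q\sim P$, so $Q^2=\Idm_n$. Write $Q$ in $2+(n-2)$ block form with blocks $A,B,C,D$, where $A=\PC_2(Q)\in\mathbb{U}_2$, so $A^2=\Idm_2$. Comparing blocks of $Q^2=\Idm_n$ gives $BC=0$, $AB=BD$ and $D^2=\Idm_{n-2}+CB$. Hence $(D+\Idm_{n-2})^2=CB$ with $(CB)^2=C(BC)B=0$, so $D$ is unipotent. Therefore
\[
\SC_2(Q)=D+CA^{-1}B=D+CBD=(\Idm_{n-2}+CB)D=D^3
\]
is unipotent as well. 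So for an involution, a corner in $\mathbb{U}_2$ always comes with a primary Schur complement, and theorem \ref{BN-2} can never be applied to it.

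Your own normal form shows the same thing. The first block row of the square of the matrix in lemma \ref{Schur-1a} is $(C,D,Y)$, so $P^2=\Idm_n$ forces $C=\Idm_2$, $D=0$, $Y=0$. Thus $C$ never lies in $\mathbb{U}_2$ or $\mathbb{X}_2$, the remedy via $D\neq0$ never applies, and the explicit $6\times 6$ and $7\times 7$ matrices you plan to fall back on cannot exist. What you actually establish is the case $\Omega_1=\Psi_1$. Your corner-in-$\mathbb{X}_2$ step via lemma \ref{Schur-9}(2) is correct and coincides with the paper's $P_1$. The case $\{\Omega_1,\Psi_1\}=\{\mathbb{X}_2,\mathbb{U}_2\}$, where $\Omega_1\Psi_1=\mathbb{U}_2$, is left open.

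The missing idea is to accept unipotent Schur complements and prove directly that $\Omega_2\Psi_2$ contains one of the realizable ones; this is what the paper does.

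For $\mr(P)\ge3$, write $P=U_2\oplus U_2\oplus U_2\oplus U$. Lemmas \ref{Schur-9}(1), \ref{Schur-7} and \ref{Schur-14} give conjugates with corner in $\mathbb{U}_2$ and Schur complements similar to $\Idm_2\oplus U_2\oplus U$, $U_2\oplus U_2\oplus U$ and $\Idm_1\oplus\Jordan_3(1)\oplus U$. Lemma \ref{cyc2} puts a unipotent matrix $U_b=\left[\begin{array}{cc}1&b\\0&\widetilde U\end{array}\right]$ with a prescribed involutory block $\widetilde U$ into $\Omega_2\Psi_2$. Since $\rank(U_b-\Idm)^2\le1$, and $\rank(U_b-\Idm)$ exceeds $\rank(\widetilde U-\Idm)$ by at most one, a Weyr characteristic count shows that $U_b$ is similar to one of these three Schur complements.

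For $\mr(P)=2$, lemma \ref{Schur-12} gives conjugates with corner in $\mathbb{U}_2$ whose Schur complements are a transvection and the identity, respectively. The computation in part 1 of the proof of lemma \ref{simple-lemma} shows that $\Omega_2\Psi_2$ always contains one of these two.
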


\begin{proof}
If $\mr(P) = 2$, then it follows from \ref{Schur-12} that $P$ is  good. So let $P = U_2 \oplus U_2 \oplus U_2 \oplus U$, where $U_2\in \mathbb{U}_2$.
By \ref{Schur-9}, \ref{Schur-7}, and \ref{Schur-14}, $P$ is similar to a matrices $P_1, P_2, P_3, P_4$, where 
\begin{enumerate}
	\item $\PC_2(P_1) \in   \mathbb{X}_2, \SC_2(P_1) \sim  X \oplus U_2 \oplus U$,
	\item $\PC_2(P_2) \in   \mathbb{U}_2, \SC_2(P_2) \sim  \Idm_2 \oplus U_2 \oplus U$,
	\item $\PC_2(P_3) \in   \mathbb{U}_2, \SC_2(P_3) \sim  U_2 \oplus U_2 \oplus U$,
	\item $\PC_2(P_4) \in   \mathbb{U}_2, \SC_2(P_4) \sim  \Idm_1 \oplus \Jordan_3(1) \oplus U$.
\end{enumerate}
	
Let $\widetilde{U} = \Idm_1 \oplus  U$. By \ref{cyc2}, there exists a row vector $b$ such 
\[
U_b :=  \left[ \begin{array}{cc} 1 & b \\ 0 &  \widetilde{U}\end{array}
\right] \in \Omega_2 \Psi_2.
\]
It is easy to see (e.g. consider the Weyr characteristic of $U_b$: $ \dim \ker U -1 \le \dim \ker U_b \le \dim \ker U, \dim \ker U_b^2 \le 1$) that $U_b$ is similar to one of the matrices  $\SC_2(P_2), \SC_2(P_3)$ or $\SC_2(P_4)$.
\end{proof}

Finally, we have to consider the case $3 \le m \le n-3$.

\begin{lemma} \label{Schur-16}
Let $3 \le m \le n-3$.
Then all unipotent matrices $P \in \GL(n,2)$ with $\mr(P) = 2$ are good.
\end{lemma}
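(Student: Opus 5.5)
We need to show: for $3 \le m \le n-3$ and unipotent $P \in \GL(n,2)$ with $\mr(P)=2$, $P$ is similar to some $Q$ with $\PC_m(Q) \in \Omega_1\Psi_1$ and $\SC_m(Q)\in \Omega_2\Psi_2$. By Theorem \ref{BN-2}, it suffices that $\PC_m(Q)$ and $\SC_m(Q)$ are nonprimary. Over $\GF(2)$ every nonsingular matrix has determinant $1$, so there is no determinant condition.

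\textbf{Reduction to a small block.} Since $\mr(P)=2$, $P$ has exactly two nontrivial invariant factors $(x+1)^a \mid (x+1)^b$, so $P \sim M \oplus \Idm_{n-a-b}$ with $M = \Jordan_a(1)\oplus \Jordan_b(1)$, $1\le a\le b$, $a+b\le n$. Following the pattern of Corollary \ref{Schur-5}, I will place a $2\times 2$ or $5\times5$ gadget across the cut between the first $m$ and last $n-m$ coordinates, and pad with identity blocks on both sides.

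The key observation is this. If $R$ has block form with $\PC_k(R)=A$ and $\SC_k(R)=S$, conjugate $\Idm_{r}\oplus R\oplus \Idm_{s}$ by a permutation so that $\Idm_r$ comes first and $\Idm_s$ last, with the cut after $r+k$ coordinates. Then $\PC_{r+k}=\Idm_r\oplus A$ and $\SC_{r+k}=S\oplus \Idm_s$. This matrix is nonprimary as soon as $A$ (resp. $S$) has a factor $x^2+x+1$ and $r\ge1$ (resp. $s\ge1$). It is also nonprimary if $A$ itself is nonprimary, e.g. $A\in\mathbb{X}_2\oplus\Idm_1$.

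\textbf{Choosing the gadget.} From the already established lemmas for $n\in\{4,5\}$, we have the following pieces:
\begin{itemize}
\item Lemma \ref{Schur-10} covers $\Jordan_2(1)\oplus\Jordan_3(1)$ and $\Idm_1\oplus\Jordan_4(1)$ with $\PC_2\in\mathbb{X}_2$ and $\SC_2\in\mathbb{X}_2\oplus\Idm_1$.
\item Lemma \ref{Schur-9} covers $\mathbb{U}_2\oplus\mathbb{U}_2$ with $\PC_2,\SC_2\in\mathbb{X}_2$.
\item Lemma \ref{Schur-11}, variant (3), covers $\Idm_2\oplus\Jordan_3(1)$ with $\PC_2\in\mathbb{U}_2$ and $\SC_2\in\mathbb{X}_2\oplus\Idm_1$.
\end{itemize}
Lemma \ref{Schur-11}(3) only places $x^2+x+1$ on one side, so for the Jordan type $(1,3)$ I instead use the transposed/inverted form via Lemma \ref{Schur-0}(1). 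This exchanges the roles of the principal corner and the Schur complement. I expect it to give a gadget with $\PC\in\mathbb{X}_2\oplus\Idm_1$ and $\SC\in\mathbb{U}_2$.

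For general $(a,b)$ with $b\ge 4$ or $a\ge2$, I first split off a $4$- or $5$-dimensional subblock containing the top parts of the two Jordan chains. I use that $P$ is similar to a block-triangular matrix whose diagonal blocks are $\Jordan_2(1)\oplus\Jordan_2(1)$ (or the Lemma \ref{Schur-10} types) and $\Jordan_{a-2}(1)\oplus\Jordan_{b-2}(1)$. I then check, as in Lemma \ref{Schur-10}, that the block-triangular shape keeps the Schur complement nonprimary: the triangular part only adds unipotent factors. Since $m\ge3$ and $n-m\ge3$, there is always room for padding on both sides ($r,s\ge1$ after placing a gadget of size at most $5$ with cut after $2$ or $3$ coordinates). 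Indeed $r = m-k\ge1$ and $s=n-m-(\text{rest})\ge 1$ can be arranged by choosing $k\in\{2,3\}$.

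\textbf{Main obstacle.} The main obstacle is the cases where $a+b$ is close to $n$, so there are few or no spare $\Idm$'s to pad. The worst case is $a+b=n$ with $a=1$ or $a=2$, where both sides must receive an irreducible quadratic factor from the gadget itself. For these cases I would try first to use the interlacing theorem (Lemma \ref{INTERLAC}) directly. The goal is to choose $\PC_m(Q)=\mathbb{X}_2\oplus \Jordan_{m-2}(1)$, which is allowed when the invariant factors of $P$ interlace those of this matrix. I then verify with the Lemma \ref{Schur-1b} parametrization that the resulting Schur complement $\begin{bmatrix} B&0\\ D&-CS^{-1}\end{bmatrix}$ is nonprimary, choosing $S$ so that $CS^{-1}$ has minimal polynomial $x^2+x+1$ on a 2-dimensional summand, exactly as in the proof of Lemma \ref{Schur-6a}.
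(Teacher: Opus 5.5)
\textbf{There is a genuine gap.} First, you misread the hypothesis. $\mr(P)$ counts the \emph{trivial} invariant factors, so for unipotent $P$ the condition $\mr(P)=2$ means $\rank(P-\Idm_n)=2$. That is, $P\sim\Jordan_2(1)\oplus\Jordan_2(1)\oplus\Idm_{n-4}$ or $P\sim\Jordan_3(1)\oplus\Idm_{n-3}$. Since $n\ge 6$ there is always identity padding, and your ``main obstacle'' $a+b=n$ never occurs. For the first type your argument is correct: Lemma \ref{Schur-9}(2), padded by $\Idm_{m-2}$ and $\Idm_{n-m-2}$, gives two nonprimary corners.

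For $P\sim\Jordan_3(1)\oplus\Idm_{n-3}$, however, \emph{no} conjugate $Q$ of $P$ has both $\PC_m(Q)$ and $\SC_m(Q)$ nonprimary, so Theorem \ref{BN-2} alone cannot finish. Here is a sketch.
\begin{enumerate}
\item $A=\PC_m(Q)$ satisfies $\rank(A-\Idm_m)\le 2$. So over $\GF(2)$ a nonprimary $A$ is similar to $\Idm_{m-2}\oplus X$ with $X\in\mathbb{X}_2$. This also rules out your interlacing choice $X\oplus\Jordan_{m-2}(1)$ for $m\ge 4$.
\item Then $\rank(A-\Idm_m)=\rank(Q-\Idm_n)$, which forces $Q-\Idm_n=\begin{bmatrix}\Idm_m\\ Z\end{bmatrix}(A-\Idm_m)\begin{bmatrix}\Idm_m & Y\end{bmatrix}$.
\item With $A=\Idm_{m-2}\oplus X$ one computes $\SC_m(Q)-\Idm_{n-m}=Z_2XY_2$. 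Unipotency of $Q$ says that $X^2(\Idm_2+W)$ is nilpotent, where $W=Y_2Z_2$ and $Y_2,Z_2$ are the parts of $Y,Z$ meeting the $X$-block.
\item A nonprimary invertible $\SC_m(Q)$ forces $XW\in\mathbb{X}_2$, so $W$ is invertible.
\item Nilpotency of $X^2(\Idm_2+W)$ together with $(Q-\Idm_n)^2\neq 0$ then forces $W\in\mathbb{U}_2$.
\item Write $\Idm_2+W=uv^T$ with $v^Tu=0$. Then $\mathrm{tr}\,X^2(\Idm_2+W)=v^TX^2u=1$, because $X^2$ has no eigenvector. This contradicts nilpotency.
\end{enumerate}
Hence $\SC_m(Q)$ is unipotent whenever $\PC_m(Q)$ is nonprimary. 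The swap via Lemma \ref{Schur-0}(1) only exchanges the two sides, as you noticed yourself, so it does not help.

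The missing idea is that you do not need nonprimary corners at all. Apply Lemma \ref{cyc2} with $\alpha=1$ and $D=\Idm_{k-1}$. It shows that every product of two cyclic classes of $\GL(k,2)$ contains $\begin{bmatrix}1&b\\0&\Idm_{k-1}\end{bmatrix}$, hence contains $\Idm_k$ or the class of transvections. So it suffices to exhibit four conjugates of $P$ whose pairs $(\PC_m,\SC_m)$ run through all combinations of identity and transvection; this is the paper's proof. Lemma \ref{Schur-7}, padded with identities, gives (transvection, transvection). The other three combinations come from explicit matrices such as $\begin{bmatrix}\Idm_m&B\\ C&\Idm_{n-m}+CB\end{bmatrix}$ or $\begin{bmatrix}\Idm_m&B\\ 0&T\end{bmatrix}$, with low-rank $B,C$ and a transvection $T$. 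Alternatively, within your framework you could keep one nonprimary corner $\PC_m\sim\Idm_{m-2}\oplus X$ and use Lemma \ref{cyc2} only on the Schur-complement side. You would then have to realize both $\SC_m=\Idm_{n-m}$ and $\SC_m$ a transvection, which is possible.
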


\begin{proof}
By \ref{Schur-7}, $P$ is similar to  matrices $P_1, P_2, P_3, P_4$ such that 
\begin{enumerate}
	\item $\PC_m(P_1) = \Idm_m,  = \SC_m(P_1) = \Idm_{n-m}$,
	\item $\PC_m(P_2) \sim \Idm_{m-2} \oplus \Jordan_2(1)$, $\SC_m(P_2) \sim \Idm_{n-m-2}      \oplus \Jordan_2(1)$, 
	\item $\PC_m(P_3) = \Idm_m$, $\SC_m(P_3) \sim \Idm_{n-m-2} \oplus \Jordan_2(1)$, 
	\item $\PC_m(P_4) \sim \Idm_{m-2} \oplus \Jordan_2(1)$, $\SC_m(P_4) = \Idm_{n-m}$. 
	\end{enumerate}
Apply lemma \ref{cyc2}.
\end{proof}

\section{Proof of theorem \ref{main-2}}

Let $\UG(2n, K)$ be the set of all matrices 
\[
P =   \left (\begin{array} {cc} A & B\\ C &  D \end{array} \right ) \in \GL(2n, K)
\]
with $P H\overline{P}' = H$, where 
\[
H =   \left (\begin{array} {cc} 0 & \Idm_n\\ \Idm_n &  0 \end{array} \right ).
\]
A direct computation shows that if A is nonsingular, then
$\SC_n(A) = \overline{A^+}$.
We call a unitary transformation $\varphi$ strictly hyperbolic  if its minimal polynomial is of the form $p(x) \overline{p}^*(x)$, where $p(x)$ is prime to its conjugate reciprocal $\overline{p}^*(x)$. If $\varphi$ is strictly hyperbolic, then 
$\ker p(\varphi)$ and $\ker \overline{p(\varphi)}^*(x)$ are totally isotropic 
and $\varphi$ is conjugate to a matrix 
\[
P =   \left (\begin{array} {cc} A & 0\\ 0 &  \overline{A^+} \end{array} \right ) \in \UG(2n, K).
\]
 Let $F = \{\lambda \in K; \overline{\lambda} = \lambda\}$.

\begin{lemma}   \label{Schur-17}
Let  $P \in \SUG(2n,K) =: \UG(2n, K) \cap \SL(2n, K)$. If $P$  is nonscalar,  
then $P$ is $\SUG$-conjugate to a matrix  $Q \in \SUG(2n,K)$ such that 
$\PC_n(Q)$ is nonscalar and has a prescribed determinant $a \in F$.
\end{lemma}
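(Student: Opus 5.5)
We will realize the conjugation inside $\SUG(2n,K)$ by two kinds of moves. The first is the Levi-type elements $L_T = T \oplus \overline{T^{-1}}'$ for $T \in \GL(n,K)$ with $\det T \in \ker(N)$, where $N$ is the norm map $K \to F$. The second is the unipotent radical elements
\[
E_S = \left(\begin{array}{cc} \Idm_n & S \\ 0 & \Idm_n \end{array}\right), \qquad S + \overline{S}' = 0,
\]
together with their transposes $E_S'$. Conjugating $P$ by $L_T$ replaces $A=\PC_n(P)$ by $T^{-1}AT$, which leaves $\det A$ unchanged. Conjugating by $E_S$ or $E_S'$ changes $A$ by terms of the form $\pm SC$, $\pm BS$, $\pm SDS$ (respectively with $S$ in the lower-left position). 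The plan is to first use these unipotent moves to make $A$ nonsingular with $\det A = a$, and then to handle nonscalarity.

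\emph{Step 1: move $\PC_n$ to a generic position.} Write $P$ in blocks $A,B,C,D$. Since $P$ is nonscalar and unitary, not all of $B,C$ can vanish unless $P = A \oplus \overline{A^+}$ is block diagonal. In the block-diagonal case, conjugation by $E_S$ gives upper-left block $A$ and upper-right block $AS - S\overline{A^+}$. If $A$ is not scalar, or $A$ is scalar but $A \ne \overline{A^+}$, this upper-right block can be made nonzero, so we reduce to the case $B \ne 0$. (If $A = \lambda\Idm_n$ with $\lambda\overline{\lambda}=1$, then $P$ is scalar, which is excluded.) Symmetrically, using $E_S'$, we may assume $C \ne 0$. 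Now conjugate by $E_{tS}'$ with a skew-hermitian $S$ of rank one, $S = s\,u\overline{u}'$ with $\overline{s}=-s$. The new corner is
\[
A(t) = A + t(BSu\text{-terms}) + t^2(\cdots),
\]
a polynomial in $t$ whose coefficients are rank-one corrections. By the matrix determinant lemma, $\det A(t)$ is an explicit polynomial in $t$ (of degree at most $2$) that is nonconstant for a suitable choice of $u$, as long as $B \ne 0$ or $C \ne 0$.

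\emph{Step 2: hit the prescribed determinant $a \in F$.} For the rank-one hermitian-type correction, $\det A(t)$ takes the form $\alpha + t\beta + t^2\gamma$. Because of the unitarity relations ($A\overline{B}'$ hermitian, etc.), one of the relevant coefficients lies in $F$ after suitable normalization. I would choose $u$ so that the quadratic term vanishes and the linear coefficient is nonzero; then $\det A(t)$ runs over all of $F$ as $t$ runs over $F$ (or over $sF$). This yields $\det \PC_n(Q) = a$, including the case $a=0$. This step is the main obstacle. It requires a careful choice of $u$, using that $\overline{\det \PC_n}$ relates to the determinant of the opposite corner via $\SC_n(A)=\overline{A^+}$, which is what forces the attainable values to lie in $F$. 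Small fields such as $F=\GF(2)$ may need a separate small check.

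\emph{Step 3: nonscalarity.} If the resulting $\PC_n(Q)=\lambda\Idm_n$ is scalar, we perturb once more by conjugating with $L_T$, or with another $E_{S}$ whose rank-one correction has trace zero contribution to the determinant, chosen so that $\det$ is preserved. For example, we can use a second rank-one update $v\overline{v}'$ with $v$ orthogonal to the relevant vector, so that the determinant lemma gives no change while the corner becomes non-scalar. The case $n=1$ is vacuous, since every $1\times1$ corner is scalar; for $n \ge 2$ such a $v$ exists.
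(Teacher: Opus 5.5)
The paper gives no argument for this lemma beyond citing Bünger's thesis (Korollar 4.1.13), so your proposal has to stand on its own. As written, two of its steps fail.

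In Steps 1--2, a single lower unipotent conjugation with skew-hermitian $S$ replaces $A$ by $A+BS$ and leaves $B$ unchanged. So for $S=ts\,u\overline{u}'$ the corner is affine in $t$ (there is no quadratic term), and $\det A(t)=\det A+ts\,\overline{u}'\,\mathrm{adj}(A)Bu$. If $\rank A\le n-2$, then $\mathrm{adj}(A)=0$ and $\det A(t)=0$ for every $u$. For example, when $n$ is even, $P=H\in\SUG(2n,K)$ has $A=0$ and $B=C=\Idm_n$, and no single rank-one move, upper or lower, produces a nonsingular corner. So your claim that $\det A(t)$ is nonconstant for a suitable $u$ whenever $B\ne0$ or $C\ne0$ is false.

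You must first raise the rank. If $A$ is singular, choose $u$ outside the two proper subspaces $\{u: Bu\in\operatorname{col}A\}$ (proper because $[A\;B]$ has rank $n$) and $\operatorname{col}(\overline{A}')$. Then $\rank(A+sBu\overline{u}')=\rank A+1$, and iterating makes $A$ invertible. From there, what you call the main obstacle is easy. The relation $PH\overline{P}'=H$ gives $A\overline{B}'+B\overline{A}'=0$, so $A^{-1}B$ is skew-hermitian and nonzero. By polarization (using $K\ne F$), some $u$ has $\overline{u}'A^{-1}Bu\ne0$. Then $\det A(t)=\det A\,(1+ts\,\overline{u}'A^{-1}Bu)$ is an affine bijection $F\to F$ for every $F$, including $\GF(2)$. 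Separately, $L_T\in\SUG$ requires $\det T\in F^*$, not $\det T\in\ker N$.

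Step 3 is a genuine gap. Conjugation by $L_T$ fixes a scalar corner, so it cannot help. A further rank-one move sends $\lambda\Idm_n$ to $\lambda\Idm_n+sBv\overline{v}'$, whose determinant is $\lambda^n(1+s\,\overline{v}'A^{-1}Bv)$. So you need $\overline{v}'A^{-1}Bv=0$ together with $Bv\ne0$, and such a $v$ need not exist:
if $A^{-1}B=\rho\, w\overline{w}'$ has rank one, the first condition forces $\overline{w}'v=0$, hence $Bv=0$;
if the form is anisotropic, it fails outright. Take $K=\mathbb{C}$, $P=\left[\begin{array}{cc}\Idm_2 & i\Idm_2\\ i\Idm_2 & 0\end{array}\right]\in\SUG(4,\mathbb{C})$ and $a=1$. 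Every nonzero rank-one move, upper or lower, changes the determinant of the scalar corner $\Idm_2$.

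What is needed is a move of rank at least two. With $M=\lambda^{-1}B$ (skew-hermitian), find a skew-hermitian $S$ with $\det(\Idm_n+MS)=1$ and $MS$ nonscalar. If $M$ is singular, take $S=\tau x\overline{y}'-\overline{\tau}\,y\overline{x}'$ with $\tau\ne0$, $0\ne y\in\ker M$ and $Mx\ne0$; then $MS$ is nonzero and square-zero. If $M$ is invertible, take $S=\overline{T}'M^{-1}T-M^{-1}$ with $T\in\SL(n,K)$ not a similitude of $M^{-1}$, for instance a suitable transvection. Without an argument of this kind, the nonscalarity half of the lemma is not proved.
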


\begin{proof}
See Bünger \cite[Korollar 4.1.13]{Bunger-1997}.
\end{proof}

\begin{lemma}                            
	Let $h$ be a hyperbolic hermitian form on a finite-dimensional vector space $V$ over a field $K$. Let $\dim V \ge 4$. If $|K| \le 9$ let $\dim V \ge 8$.
	There exists a cyclic strictly hyperbolic conjugacy class 
	$\Omega$ of  $\SUG(V,K, h)$ with $\Omega = \Omega^{-1}$ such that $\Omega^2$
	contains all nonscalar transformations of $\SUG(V,K, h)$.
\end{lemma}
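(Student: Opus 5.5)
We will realise a strictly hyperbolic unitary element in block form, pull products in $\GL(n,K)$ back to $\UG(2n,K)$, and use Theorem \ref{main-1} to supply the products we need.

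\textbf{Choice of $\Omega$.} Write $\dim V = 2n$ and use the matrix model $\UG(2n,K)$ with Gram matrix $H$. Choose a cyclic class $\Omega_1$ of $\GL(n,K)$ with minimal polynomial $p(x)$ such that:
\begin{itemize}
\item[(a)] $p$ is coprime to $\overline{p}^*$;
\item[(b)] $\Omega_1=\Omega_1^{-1}$, i.e.\ $p$ is self-reciprocal up to a scalar;
\item[(c)] $\Omega_1$ is $(m,n-m)$-cyclic for some $1\le m\le n-1$;
\item[(d)] $\det \Omega_1 \overline{\det \Omega_1}^{-1}=1$, so that the resulting unitary element has determinant $1$.
\end{itemize}
For $|K|>9$ one can take $p=(x-\delta)(x-\delta^{-1})(x-1)^{n-2}$ with $\delta\in F$, $\delta\neq\pm1$. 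For small fields one takes products of irreducible self-reciprocal polynomials over $K$ that are not conjugate-self-reciprocal, as in the proof of Corollary \ref{Lev-1}. This is where $\dim V\ge 8$ enters: $n\ge4$, and $n\ge5$ when $K=\GF(9)$ gives the residue field $\GF(3)$ case, so Theorem \ref{main-1} applies with the needed degrees. Then let $\Omega$ be the $\SUG$-class of $A\oplus\overline{A^+}$ with $A\in\Omega_1$. This class is cyclic and strictly hyperbolic, and $\Omega=\Omega^{-1}$ follows from (b), since inversion stays within the same $\UG$-class. One also has to check that the $\UG$-class does not split into several $\SUG$-classes in a way that breaks $\Omega=\Omega^{-1}$; cyclicity makes the centralizer large enough for this (its determinants cover the norm-one group).

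\textbf{Main step.} Let $P\in\SUG(2n,K)$ be nonscalar. By Lemma \ref{Schur-17}, we may replace $P$ by an $\SUG$-conjugate $Q$ whose corner $A=\PC_n(Q)$ is nonscalar with a prescribed determinant $a\in F$. Choose $a=\det\Omega_1^2$. By the direct computation quoted above, $\SC_n(Q)=\overline{A^+}$. Theorem \ref{main-1} gives $A=A_1A_2$ with $A_1,A_2\in\Omega_1$ (here $\Omega_1^2$ plays the role of $\Omega\Psi$). Conjugating by $\overline{\,\cdot\,}^+$ then gives $\SC_n(Q)=\overline{A_1^+}\,\overline{A_2^+}$ with both factors in $\overline{\Omega_1^+}$. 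Lemma \ref{Schur-1} now factors $Q$ as a block lower triangular matrix times a block upper triangular matrix, with diagonal blocks $(A_1,\overline{A_1^+})$ and $(A_2,\overline{A_2^+})$.

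\textbf{Checking the factors.} We must show that these block triangular factors are unitary, or can be adjusted to be unitary; this is the \emph{main obstacle}. The factorization in Lemma \ref{Schur-1} is not automatically compatible with $H$. My approach is to exploit the freedom in it. One can write
\[
Q=\begin{pmatrix}A_1&0\\ C_1&\overline{A_1^+}\end{pmatrix}\begin{pmatrix}A_2&B_2\\ 0&\overline{A_2^+}\end{pmatrix}.
\]
Unitarity of a block lower triangular matrix amounts to a hermitian-type condition on $C_1\overline{A_1}'$, and similarly for $B_2$. Comparing the blocks with the unitarity of $Q$, one sees that one can pick $C_1$ and $B_2$ to satisfy both conditions simultaneously, with the remaining discrepancy absorbed by a unitary block unipotent conjugation.

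Once the factors are unitary, each is conjugate within $\UG$ to its block diagonal part, because its diagonal block has minimal polynomial coprime to that of its conjugate-reciprocal block (by (a)); a Sylvester-equation argument kills the off-diagonal block. Each factor therefore lies in the $\UG$-class of $\Omega$, and, being of determinant one and cyclic, in $\Omega$ itself. Hence $Q\in\Omega^2$, and so $\Omega^2$ contains every nonscalar element of $\SUG(V,K,h)$.
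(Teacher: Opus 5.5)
\textbf{Where it agrees with the paper.} Your skeleton is the paper's. You use Lemma \ref{Schur-17} to get a nonscalar corner of determinant $\det\Omega_1^2$, Theorem \ref{main-1} to write it as $A_1A_2$ with $A_i\in\Omega_1$, and Lemma \ref{Schur-1} to split $Q$. You then identify the two triangular factors with elements of $\Omega$. For $n=2,3$ you must cite Theorem \ref{BN} instead, since Theorem \ref{main-1} needs $n\ge4$.

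\textbf{The gap: the choice of $\Omega$.} This is half of what the lemma asserts, and your construction fails.
\begin{enumerate}
\item Conditions (a) and (b) are incompatible whenever $n=\frac{\dim V}{2}$ is odd. A polynomial of odd degree whose roots are closed under $r\mapsto r^{-1}$ vanishes at some $\epsilon=\pm1$. Since $\overline{\epsilon}^{-1}=\epsilon$, $\epsilon$ is then also a root of $\overline{p}^*$.
\item Your explicit choice for $|K|>9$ violates (a) for every $n$. With $\delta\in F$ one gets $\overline{p}^*=p$, so $A\oplus\overline{A^+}$ has minimal polynomial $p$ of degree $n$. It is therefore neither cyclic nor strictly hyperbolic, and your closing Sylvester argument has nothing to work with.
\item For $\GF(4)$ and $\GF(9)$ you construct nothing. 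The polynomials of Corollary \ref{Lev-1} do not transfer: they lie in $F[x]$ and share a root with their reciprocal, so over $\GF(4)$, $\GF(9)$ they violate (a).
\end{enumerate}

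\textbf{The cure.} Drop (b): $\Omega=\Omega^{-1}$ only needs $p\,\overline{p}^*$ to be self-reciprocal, which holds for instance when $p\in F[x]$. The paper takes $p=(x-\lambda)(x-\overline{\lambda})(x-\alpha)^{n-2}$ with $\lambda\in K\setminus F$, $\lambda\overline{\lambda}\ne1$, $\alpha\in F\setminus\{0,\pm1\}$. For $\GF(9)$ and $\GF(4)$ it uses explicit products of such factors with self-reciprocal factors like $x^2+\mu x+1$, $\mu\ne\overline{\mu}$.

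The simple root $\lambda$ also settles a point you leave open. Via Hilbert 90, the centralizer contains elements of every norm-one determinant, so the $\UG$-class is a single $\SUG$-class. Your claim that cyclicity alone gives this needs proof: the determinants of $K[A]^*$ are norms from $K[x]/(p)$, and these need not exhaust $K^*$ for infinite $K$.

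Your explanation of the hypothesis $\dim V\ge8$ is also off. Theorem \ref{main-1} is applied over $K=\GF(9)$ itself, so its $\GF(3)$ exception plays no role.

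\textbf{The ``main obstacle'' is not one.} Your remedy is confused: there is no freedom, since comparing blocks forces $C_1=CA_2^{-1}$ and $B_2=A_1^{-1}B$. But both factors are automatically unitary.
\begin{enumerate}
\item The lower factor is unitary iff $C_1A_1^{-1}=CA^{-1}$ is skew-hermitian.
\item The upper factor is unitary iff $A_2^{-1}B_2=A^{-1}B$ is skew-hermitian.
\item From $\overline{Q}'HQ=H$ you get $\overline{A}'C$ skew-hermitian, hence so is $CA^{-1}=\overline{A^{-1}}'(\overline{A}'C)A^{-1}$.
\item From $QH\overline{Q}'=H$ you get $A\overline{B}'$ skew-hermitian, hence so is $A^{-1}B$.
\end{enumerate}
This is why the paper simply writes the factorization down. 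Once $\Omega$ is chosen correctly, your Sylvester step works. The block unitriangular conjugator with corner $Z$ is unitary because $Z$ and $-\overline{Z}'$ solve the same uniquely solvable equation.
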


\begin{proof}
	Put $n = \frac{\dim V}{2}$.	Let $\Omega$ be the strictly hyperbolic  conjugacy class of $\UG(V,K, h)$ with minimal polynomial $p \overline{p}^*$, where $p$ is as follows:
\begin{enumerate}
		\item Let  $|K| \ge 16$. Let $\alpha \in F - \{0, \pm 1\}$. Let $\lambda \in K - F$ with $\lambda \overline{\lambda} \ne 1$. 
		Put $p(x) = (x-\lambda)(x-\overline{\lambda}) (x-\alpha)^{n-2}$.
		\item Let $K = \GF(9)$. Let $\lambda \in K$ with $\lambda \overline{\lambda} = -1$.
		The polynomial $x^3-x -1$ is irreducible in $\GF(3)[x]$, hence it is irreducible in $\GF(9)[x]$ as its degree is  odd.
			Let $x^2 + \mu x + 1$ be irreducible. Then $\mu \ne \overline{\mu}$.
    \begin{enumerate}
			\item If $n=2t+2 \ge 4$ let $p(x) = (x-\lambda)(x-\overline{\lambda}) (x^2 + \mu x + 1)^t$.
			\item If $n=2t+5 \ge 5$ let $p(x) = (x-\lambda)(x-\overline{\lambda}) (x^3-x -1) (x^2 + \mu x + 1)^t$.
   \end{enumerate}
		\item Let $K = \GF(4)$.  Let $\lambda \in K, \lambda \ne 0, 1$.
		The  polynomials $x^3+x+1, x^2 +x + \lambda, x^2 + x + \lambda^2$ and $x^2 + \lambda x + 1$ are irreducible, and $x^4 + x + 1 = (x^2 +x + \lambda) (x^2 + x + \lambda^2)$.   
    \begin{enumerate}
    	    \item If $n=3t + 2 \ge 5$ let $p(x) =  (x^3+x+1)^t (x^2 + \lambda x + 1)$.
    	    \item If $n=3t + 4 \ge 4$ let $p(x) = (x^3+x+1)^t (x^4 + x + 1)$.
    	    \item If $n=3t + 6 \ge 6$ let $p(x) = (x^3+x+1)^t (x^4 + x + 1)(x^2 + \lambda x + 1)$.
	\end{enumerate}
\end{enumerate}	
Obviously, $\Omega = \Omega^{-1}$. Further, $\Omega$ is a conjugacy class of $\SUG(2n,K)$:
	Let $\varphi \in \Omega$.  

First let  $|K| \ge 9$.
 Let $W = \ker (\varphi - \lambda) \oplus \ker (\varphi - \overline{\lambda}^{-1})$. Then $W$ is nondegenerate.
	It suffices to show that the centralizer of $\varphi|_W$ contains an element with arbitrary nonzero determinant $\delta$, where $\delta \overline{\delta} = 1$.

By Hilbert90, $\delta = \epsilon \overline{\epsilon}^{-1}$ for some $\epsilon \in K^*$. 
Obviously, the centralizer of $\varphi|_W$  contains a transformation with minimal polynomial $(x-\epsilon)  (x-\overline{\epsilon }^{-1})$.

So let $K = \GF(4)$. Let  $W = \ker (\varphi^2 + \lambda \varphi +1) (\varphi^2 + \overline{\lambda} \varphi +1)$ or $W = \ker (\varphi^4 + \varphi +1) (\varphi^4 +  \varphi^3 +1)$.
Then $W$ is nondegenerate, and the nontrivial  scalar transformations on $W$ are unitary. Further, $\det \alpha \Idm_4 = \alpha$ and $\det \alpha \Idm_8 = \alpha^2$.
	
Now let $\psi \in \SUG(V,K, h)$ be nonscalar. 
 By lemma \ref{Schur-17}, theorem \ref{BN}, and theorem \ref{main-1}, $\psi$ is $\SUG$-conjugate to a matrix $Q$ such that $\PC_n(Q)  =  RS$, where $R$ and $S$ are cyclic with $\mu(R), \mu(S) = p$.
Then
\[
	Q =   \left (\begin{array} {cc} A & B\\ C &  D \end{array} \right ) =
	\left (\begin{array} {cc} R & 0\\ CS^{-1} &  \overline{R}^+ \end{array} \right )
	\left (\begin{array} {cc} S & R^{-1}B\\ 0 &  \overline{S}^+ \end{array} \right ).
\]
Hence $\psi \in \Omega^2$.
\end{proof}


\ifdraft{\listoflabels}{}
\end{document}

\typeout{get arXiv to do 4 passes: Label(s) may have changed. Rerun}